\newcommand{\C}{\mathbf{C}}
\newcommand{\Q}{\mathbf{Q}}
\newcommand{\R}{\mathbf{R}}
\newcommand{\F}{\mathbf{F}}
\newcommand{\Z}{\mathbf{Z}}
\newcommand{\N}{\mathbf{N}}
\renewcommand{\P}{\mathbf{P}}
\newcommand{\fX}{\mathfrak{X}}
\newcommand{\tF}{\tilde{F}}
\newcommand{\cL}{\mathcal{L}}
\newcommand{\cO}{\mathcal{O}}
\newcommand{\cT}{\mathcal{T}}
\newcommand{\cV}{\mathcal{V}}
\newcommand{\hcV}{\hat{\mathcal{V}}}
\newcommand{\cVdiv}{{\mathcal{V}_{\mathrm{div}}}}
\newcommand{\hcVdiv}{{\hat{\mathcal{V}}_{\mathrm{div}}}}
\newcommand{\Ltwo}{\mathrm{L}^2}
\renewcommand{\a}{\alpha}
\renewcommand{\b}{\beta}
\newcommand{\g}{\gamma}
\renewcommand{\d}{\delta}
\newcommand{\NS}{\mathrm{NS}}
\newcommand{\e}{\varepsilon}
\newcommand{\eg}{{\rm e.g.\ }} 
\newcommand{\ie}{{\rm i.e.\ }}
\newcommand{\la}{\lambda}
\newcommand{\tnu}{\tilde\nu}
\newcommand{\vv}{\vec{v}}
\renewcommand{\=}{:=}
\DeclareMathOperator{\id}{id}
\DeclareMathOperator{\lcm}{lcm}
\DeclareMathOperator{\ord}{ord}
\DeclareMathOperator{\Pic}{Pic}
\DeclareMathOperator{\hodge}{NS}
\numberwithin{equation}{section}       
\newtheorem{prop}{Proposition}[section]
\newtheorem{thm}[prop]{Theorem} 
\newtheorem{defi}[prop]{Definition}
\newtheorem{lem}[prop]{Lemma}
\newtheorem{cor}[prop]{Corollary}
\newtheorem{prop-def}[prop]{Proposition-Definition}
\newtheorem*{thmA}{Theorem A} 
\newtheorem*{thmB}{Theorem B} 
\newtheorem*{thmC}{Theorem C} 
\newtheorem*{thmD}{Theorem D}
\theoremstyle{remark}
\newtheorem{exam}[prop]{Example}
\newtheorem{rmk}[prop]{Remark}
\newtheorem*{ackn}{Acknowledgment} 
\title{Dynamical compactifications of $\C^2$}
\date{\today}
\author{Charles Favre \and Mattias Jonsson}
\address{CNRS-Universit{\'e} Paris 7\\
  Institut de Math{\'e}matiques\\
  F-75251 Paris Cedex 05\\
  France}
\email{favre@math.jussieu.fr}
\address{Dept of Mathematics\\
  University of Michigan\\
  Ann Arbor, MI 48109-1043\\
  USA}
\email{mattiasj@umich.edu}
\thanks{Second author supported by the NSF
  and the Swedish Research Council.}
\begin{document}

\begin{abstract}
  We find good dynamical compactifications for arbitrary
  polynomial mappings of $\C^2$ and use them to show that
  the degree growth sequence satisfies
  a linear integral recursion formula. 
  For maps of low topological degree we 
  prove that the Green function is well behaved.
  For maps of maximum topological degree, we give 
  normal forms.
\end{abstract}

\maketitle

%
%
%
%

\section*{Introduction}
The theory of iteration of rational maps on complex projective
varieties has recently seen the introduction of new analytic
techniques for constructing invariant currents and measures of
dynamical interest,
through the work of
Bedford-Diller~\cite{BD}, de Th{\'e}lin-Vigny~\cite{DTV}, 
Diller-Dujardin-Guedj~\cite{DDG1,DDG2,DDG3}, Dinh-Sibony~\cite{DSallure,DSgeom,DSsuper},
Dujardin~\cite{henon-like,laminar},
Guedj~\cite{guedjpoly,guedjannals},
and others.
These constructions, however,
often require a good birational model in which the 
dynamical indeterminacy
set has a relatively small size, so that the action on cohomology of
the rational map is compatible with iteration.

Diller-Favre~\cite{DF} proved the existence of such models for
birational surface maps using the decomposition into
blow-ups and blow-downs. There are no other general results, 
for two reasons. First, it is a delicate task to control
a rational map near its indeterminacy set. 
Second, the indeterminacy
set of a non-invertible map tends to grow very 
rapidly under iteration.

In this paper we prove the existence of good birational models for 
an important class of 
rational surface maps, namely polynomial maps. 
\begin{thmA}
  Let $F:\C^2\to\C^2$ be any polynomial mapping. 
  Then there exists a projective compactification
  $X\supset\C^2$ with at worst quotient singularities
  and an integer $n\ge1$ such that the lift
  $\tF:X\dashrightarrow X$ satisfies
  $\tF^{(j+n)*}=\tF^{j*}\tF^{n*}=(\tF^{*})^j\tF^{n*}$
  on the Picard group $\Pic(X)$ for all $j\ge1$.
\end{thmA}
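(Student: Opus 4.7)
The plan is to analyze the action of $F$ at infinity via valuations on $\C[x,y]$, find an invariant configuration, and build the compactification around it. First I would set up the valuative tree at infinity $\cV_\infty$, consisting of normalized centered valuations $\nu:\C[x,y]\setminus\{0\}\to\R$, normalized by $\max(\nu(x),\nu(y))=-1$ and endowed with its canonical tree structure. Divisorial valuations (those given by the order of vanishing along prime divisors lying over the line at infinity in some modification of $\P^2$) are dense in $\cV_\infty$, and any projective compactification $X\supset\C^2$ with at worst quotient singularities corresponds to a finite subset $\cT\subset\cV_\infty$ of divisorial valuations whose dual intersection form is negative definite; conversely, every such subset yields a surface of this form. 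The polynomial map $F$ induces, via pullback of valuations $\nu\mapsto \nu\circ F^*$, a continuous self-map $F_\bullet:\cV_\infty\to\cV_\infty$ together with a multiplier $d_F(\nu)>0$ defined by $\nu\circ F^*=d_F(\nu)\,F_\bullet\nu$.

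The heart of the argument is to produce an \emph{eigenvaluation}: a point $\nu_\star\in\cV_\infty$ fixed by some iterate $F_\bullet^n$. I would obtain this by a fixed-point argument on the tree, analyzing the action of $F_\bullet$ on tangent directions at each branch point, and in the case where $F_\bullet$ only admits a fixed end, refining the analysis to obtain a genuine interior fixed point. Depending on $F$, the eigenvaluation $\nu_\star$ may be divisorial, quasimonomial but irrational, or of curve type, and each case requires its own treatment. Starting from $\nu_\star$ I would then construct a finite $F_\bullet^n$-invariant subtree $\cT\subset\cV_\infty$ with divisorial endpoints and negative-definite dual intersection form; the associated projective surface is the desired compactification $X$.

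Algebraic stability past the $n$-th iterate then follows from the invariance of $\cT$: for any prime divisor $E$ on $X$ at infinity, corresponding to a valuation $\nu_E\in\cT$, the decomposition of $\tF^*[E]$ into prime components is controlled by the $F_\bullet$-preimages of $\nu_E$, and $F_\bullet^n$-invariance ensures these preimages all lie in $\cT$. Consequently, no exceptional curve of any iterate $\tF^k$ with $k\geq n$ is mapped into the indeterminacy locus of $\tF$, which is precisely the geometric translation of the cohomological identity $\tF^{(j+n)*}=\tF^{j*}\tF^{n*}=(\tF^*)^j\tF^{n*}$ on $\Pic(X)$. The principal obstacle is the construction of the eigenvaluation together with a finite invariant subtree containing it: in the irrational or curve-type cases the naively invariant subtree is infinite, and a careful truncation together with negative-definiteness estimates on the resulting dual graph is required to extract a finite $\cT$ yielding a compactification with only quotient singularities.
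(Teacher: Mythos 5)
Your proposal captures the right objects---the valuative tree at infinity, the induced map $F_\bullet$, eigenvaluations, and compactifications built from configurations of divisorial valuations---but it has two gaps that are not cosmetic and that the paper spends considerable effort closing.

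\textbf{Non-properness.} You assert that pullback of valuations induces a continuous self-map $F_\bullet:\cV_\infty\to\cV_\infty$. This is false when $F$ is not proper: there exist $\nu\in\cV_0$ with $d(F,\nu)=\nu(F^*L)=0$, and for such $\nu$ the expression $F_\bullet\nu=F_*\nu/d(F,\nu)$ is undefined. The paper handles this by restricting to the subtree $\cV_1=\{\a\ge 0,\ A\le 0\}$, on which $d(F,\cdot)>0$ and which $F_\bullet$ preserves (Proposition~\ref{prop:V1invariant}). Nothing in your argument substitutes for this; without it, your fixed-point argument on the tree is not even set up. (Incidentally, curve valuations never lie in $\cV_1$ and can never be eigenvaluations, so your ``curve type'' case does not occur.)

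\textbf{Invariance of a finite subtree does not give stability.} Your final step claims that $F_\bullet^n$-invariance of a finite subtree $\cT$ ensures that exceptional curves of iterates do not hit indeterminacy points. This does not follow. First, you argue via preimages of $\nu_E$, but what must be controlled for algebraic stability is where the \emph{forward} orbit of each $\nu_E$ goes; $F_\bullet\cT\subset\cT$ says nothing about orbits of valuations starting outside $\cT$, and the primes of $X$ need not all lie in $\cT$. Second, and more fundamentally, one must show that all but at most one prime of $X$ is eventually attracted, under $F_\bullet$, either to the center of $\nu_*$ (which is a superattracting fixed point of the lift) or to a periodic point at which the lift is holomorphic; this is a genuine \emph{global} dynamical statement. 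In the regime $\la_2<\la_1^2$ the paper proves it via the spectral theory of $F_*,F^*$ on $\Ltwo(\fX)$ (Theorem~\ref{thm:basin}), and in the regime $\la_2=\la_1^2$ it requires a separate, rather delicate tree analysis (Propositions~\ref{p:nonproper}--\ref{p:proper}) that yields the toric models and the quotient singularities. Your outline has no counterpart to either argument, and without some version of them, a finite invariant subtree is not enough.

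A smaller point: a compactification of $\C^2$ does not correspond to a negative-definite configuration of divisorial valuations. The intersection form on all primes at infinity of any compactification has signature $(1,k-1)$. What is negative definite in the paper's construction is the \emph{orthogonal complement} of the class $Z_{\nu_*}$ inside $\Pic(X_0)$, used to contract all primes except the center of $\nu_*$ (see the proof of Lemma~\ref{l:tanvec0}). And in the cases $\la_2<\la_1^2$ the compactification is in fact a smooth (tight admissible) surface; quotient singularities only arise in the $\la_2=\la_1^2$ case.
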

Fix an algebraic embedding $\C^2\subset\P^2$.
Define $\deg(F)$ by the relation $F^*\cL = \deg(F)\cL$ 
for $\cL$ a generator of $\Pic(\P^2)$. 
Using Theorem~A we prove
\begin{thmB}
  For any  polynomial mapping $F:\C^2\to\C^2$,
  the sequence $(\deg(F^j))_{j\ge0}$ satisfies an
   integral linear recursion formula.
\end{thmB}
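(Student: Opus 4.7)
The plan is to deduce Theorem~B from Theorem~A by applying the Cayley-Hamilton theorem to the operator $\tF^*$ on the finite-dimensional $\Q$-vector space $\Pic(X)_\Q:=\Pic(X)\otimes\Q$.

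Theorem~A provides a projective compactification $X$ with at worst quotient singularities, the lift $\tF$, and an integer $n\ge 1$ satisfying the semi-stability identity
$$\tF^{(j+n)*}=(\tF^*)^j\tF^{n*}\quad\text{on }\Pic(X)\text{ for every }j\ge 1.$$
Since $X$ is a projective rational surface with quotient singularities, $\Pic(X)_\Q$ is finite-dimensional, $\tF^*$ is a well-defined $\Q$-linear endomorphism of it, and the intersection pairing is $\Q$-valued. Let $P(t)\in\Q[t]$ be the characteristic polynomial of $\tF^*$ and choose $N\in\Z_{>0}$ so that $\tilde P(t):=NP(t)=\sum_{i=0}^d a_it^i$ lies in $\Z[t]$. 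Cayley-Hamilton gives $\tilde P(\tF^*)=0$, hence for every $k\ge 0$ and $\om\in\Pic(X)_\Q$
$$0=(\tF^*)^k\tilde P(\tF^*)\tF^{n*}\om=\sum_{i=0}^d a_i(\tF^*)^{k+i}\tF^{n*}\om.$$
Rewriting each summand via the semi-stability identity (trivially when $k+i=0$, by Theorem~A otherwise) produces the class-valued recursion
$$\sum_{i=0}^d a_i\,\tF^{(j+i)*}\om=0\quad\text{in }\Pic(X)_\Q,\ \text{for }j\ge n,\ \om\in\Pic(X)_\Q.$$

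Next I identify $\deg(F^j)$ with a $\Q$-linear functional of $\tF^{j*}\om$ for suitable $\om$. Resolve the birational map $\sigma:X\dashrightarrow\P^2$ on a smooth surface $\mu:Y\to X$ so that $\pi:=\sigma\rond\mu:Y\to\P^2$ becomes a morphism, and let $G:Y\dashrightarrow Y$ be the lift of $F$ to $Y$, satisfying $\mu\rond G=\tF\rond\mu$ and $\pi\rond G=F\rond\pi$. For the hyperplane class $H\in\Pic(\P^2)$, the projection formula for $\pi$ together with $\pi\rond G=F\rond\pi$ gives $\deg(F^j)=G^{j*}\pi^*H\cdot\pi^*H$ on $Y$. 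Writing $\pi^*H=\mu^*\a+E$ with $\a:=\mu_*\pi^*H\in\Pic(X)_\Q$ and $E$ a $\mu$-exceptional $\Q$-divisor, and using $G^{j*}\mu^*=\mu^*\tF^{j*}$ together with the projection formula for $\mu$, one expresses $\deg(F^j)$ as a $\Q$-linear combination of intersection numbers of the form $\tF^{j*}\a_\nu\cdot\b_\nu$ on $X$, for finitely many $\a_\nu,\b_\nu\in\Pic(X)_\Q$. Pairing the class-valued recursion against each $\b_\nu$ then yields the integer linear recursion $\sum_{i=0}^d a_i\,\deg(F^{j+i})=0$ for every $j\ge n$; absorbing the first $n$ initial indices into the recursion polynomial (replace $\tilde P(t)$ by $t^n\tilde P(t)$) extends it to a $\Z$-linear recursion valid for all $j\ge 0$.

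The main obstacle is the second step: on the singular variety $X$ and for rational self-maps, the naive identity $(fg)^*=g^*f^*$ fails in general, so the decomposition $\pi^*H=\mu^*\a+E$ and the various transport-via-$\mu$ computations must be carried out with careful bookkeeping of $\mu$-exceptional corrections on $Y$. Once this dictionary between $\tF^{j*}$ on $\Pic(X)_\Q$ and the classical degrees $\deg(F^j)$ is in place, the result reduces to a mechanical linear-algebra computation; all the genuine content of Theorem~B is concentrated in the semi-stability identity furnished by Theorem~A.
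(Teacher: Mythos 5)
Your first step is the same as the paper's: when $X$ is an admissible (hence smooth) compactification dominating $\P^2$ with $\cL$ the pulled-back line class, the identity $\deg(F^{j+n}) = \bigl(\tF^{(j+n)*}\cL\cdot\cL\bigr) = \bigl((\tF^*)^j\tF^{n*}\cL\cdot\cL\bigr)$ followed by Cayley--Hamilton for $\tF^*$ (which is integral in the basis of primes) is exactly how the paper derives Theorem~B from the stability statement in the admissible case, and this works whenever $\la_2<\la_1^2$ because Theorems~\ref{thm:stability1}--\ref{thm:stability3} produce tight, hence admissible, compactifications.

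The genuine gap is in your second step, the ``dictionary''. When $\la_2=\la_1^2$ and $\deg F^n/\la_1^n$ is bounded, the $X$ furnished by Theorem~A is a toric surface with quotient singularities that typically neither is smooth nor dominates $\P^2$, and the paper says explicitly at this point that ``Theorem~B does not immediately follow''. Your proposed transfer via a resolution $\mu:Y\to X$ with $\pi=\sigma\circ\mu:Y\to\P^2$ runs into the obstruction you yourself flag but then dismiss as bookkeeping: the relation $G_j^{*}\mu^{*}=\mu^{*}\tF^{j*}$ (where $G_j$ lifts $F^j$) is false in general, and the true statement is only $\mu_*G_j^{*}\mu^{*}=\tF^{j*}$, leaving a $j$-dependent $\mu$-exceptional correction $v_j=G_j^{*}\mu^{*}\alpha-\mu^{*}\tF^{j*}\alpha$. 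Writing $\pi^*H=\mu^*\alpha+E$ and expanding, $\deg(F^j)=(\tF^{j*}\alpha\cdot\alpha)_X + (v_j\cdot E) + (G_j^{*}E\cdot\pi^*H)$, and the last two terms---supported on the $\mu$-exceptional locus---are precisely the obstruction to algebraic stability on $Y$ and are \emph{not} controlled by the semi-stability identity on $\Pic(X)$. There is no reason for them to satisfy a linear recursion. (Worse, $\mu_*G_j^{*}E$ need not vanish: $G_j^{*}E$ is supported on $G_j^{-1}(E)\subset\mu^{-1}\bigl(\tF^{-j}(\mu(E))\bigr)$, which can contain non-$\mu$-exceptional components.) The paper does not attempt your transfer; instead, in each of the residual sub-cases ($\deg F^n/\la_1^n$ unbounded; $\la_1=1$ via Diller--Favre; $\la_1>1$ bounded, split further according to $\cT_F$ and $\cT_{F^2}$) it goes back and constructs a tight \emph{admissible} compactification with the semi-stability property, so that the direct Cayley--Hamilton computation applies. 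Your plan would need a new argument showing the exceptional corrections vanish or themselves obey a recursion; as written, the proof is incomplete precisely where the paper has to work hardest.
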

This gives a positive answer to the main conjecture
of Bellon-Viallet~\cite[\S5]{bellon-viallet} in our setting.
Example~\ref{ex:recursion} shows that there 
may not be a recursion of order one or two,
even though the asymptotic degree $\la_1$ (see below) is always a quadratic
integer~\cite{eigenval}.
In general, the degree growth of rational maps of 
projective space remains mysterious, despite 
recent works~\cite{BK1,BK2,BK3,BF,propp,Nguyen}.
Hasselblatt and Propp~\cite{propp} give
examples of rational maps of $\C^2$ for
which the degree growth does not satisfy any linear 
recursion formula.
From this perspective, Theorem~B is quite remarkable.

\smallskip
In many cases---and always after replacing $F$ by $F^2$---$X$ 
is smooth and can be obtained from $\P^2$ after 
finitely many blowups at infinity. 
In these cases, Theorem~B follows immediately from Theorem~A.

Note that the compactifications that we consider here are
different in nature from the ones studied by Hubbard, Papadopol 
and Veselov~\cite{HPV}.

\smallskip
The conclusion in Theorem~A is slightly weaker than the 
condition $(\tF^j)^*=(\tF^*)^j$ on $\Pic(X)$ for all $j$; 
the latter is often referred to as 
\emph{algebraic stability}~\cite{Sib}.
Its importance was first recognized by 
Forn{\ae}ss and Sibony~\cite{FS2}.
Guedj has conjectured~\cite[Remark~3.1]{guedjpoly} 
than any polynomial mapping of $\C^2$ is algebraically
stable on some (smooth) compactification $X$ of $\C^2$. 
While we suspect this may be too much to ask for,
see Remark~\ref{r:guedjcounter}, Theorem~A shows that the 
conjecture holds after replacing $F$ by an iterate. 
In any case, Theorem~A is sufficient for all known applications.

\smallskip
The proof of Theorem~A is based on the 
valuative techniques developed in~\cite{eigenval}. These give
a framework for studying 
the dynamics induced by $F$ on the set of divisors
at infinity in all compactifications of $\C^2$. 
This set of divisors can be identified with a dense
subset of a metrized $\R$-tree $\cV_0$ consisting of all
valuations on $\C[x,y]$, centered at infinity and suitably 
normalized. There are however two difficulties in working directly 
with $\cV_0$. First, a valuation
in $\cV_0$ is a local object, whereas we are interested in 
global properties of $F$. Second, $F$ 
might be not proper, and in this case it does not preserve $\cV_0$. 
To remedy  these problems,
we introduced in~\cite{eigenval} 
a subtree $\cV_1$ of $\cV_0$ consisting of valuations close enough
to $-\deg$, see~\S\ref{sec:tight} for a formal definition. 
This subtree is a fundamental technical tool in our analysis.
In op.\ cit., we showed that valuations in $\cV_1$ still capture
global information, that $F$  induces a continuous map on $\cV_1$, 
and we proved the existence of 
a locally attracting  valuation $\nu_* \in \cV_1$ that 
we called an \emph{eigenvaluation}. Theorem~A is a 
consequence of a detailed study of  the global 
contracting properties of $F$ on $\cV_1$.

Denote by 
$\la_1\=\lim_{n\to\infty} \deg(F^n)^{1/n}$
the \emph{asymptotic degree} of $F$~\cite{RS} and by $\la_2$
the \emph{topological degree} of $F$.  
These degrees are invariant under
conjugacy by polynomial automorphisms
and satisfy $\la_2 \le \la_1^2$. 
In the case $\la_2<\la_1^2$, the Hilbert space
methods of Boucksom and the authors~\cite{deggrowth} 
apply (see also the work of Hubbard-Papadopol~\cite{HP1}, 
Cantat~\cite{cantat} and Manin~\cite{manin}). 
We showed in~\cite{deggrowth} 
that $\deg(F^n) \sim \la_1^n$. Here we use these techniques
to prove that $\nu_*$ attracts all valuations in $\cV_1$ 
(with at most one exception). 

When $\la_2=\la_1^2$, the Hilbert space technique loses some strength.
However, this loss is compensated by the built-in rigidity of these maps.
A more detailed study of the valuative dynamics allows us to show
\begin{thmC}
  Let $F:\C^2\to\C^2$ be a dominant polynomial mapping
  with $\la_2=\la_1^2$.  Then we are in one of the following
  mutually exclusive cases:
  \begin{itemize}
  \item[(1)]
    $\deg(F^n)\sim n\la_1^n$; 
    then $\la_1\in\N$ and in suitable affine coordinates,
    $F$ is a skew product of the form
    $F(x,y)=(P(x),Q(x,y))$, where
    $\deg(P)=\la_1$ and
    $Q(x,y)=A(x)y^{\la_1}+O_x(y^{\la_1-1})$ with 
    $\deg(A) \ge 1$;
  \item[(2)]
    $\deg(F^n)\sim\la_1^n$; 
    then there exists a projective compactification 
    $X\supset\C^2$ with at most quotient
    singularities such that $F$ extends to a holomorphic
    selfmap of $X$.
  \end{itemize}
\end{thmC}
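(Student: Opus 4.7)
The plan is to exploit the dichotomy for the eigenvaluation $\nu_*\in\cV_1$ provided by the valuative framework. Passing to a sufficiently high iterate and applying Theorem~A, I would work on a compactification $X$ with at worst quotient singularities on which $(\tF^*)^{j}=\tF^{j*}$ on $\Pic(X)$ for $j\ge 1$. The eigenvaluation $\nu_*$ satisfies $F_*\nu_*=\la_1\nu_*$, and my first goal would be to show that under the hypothesis $\la_2=\la_1^2$ the attraction result mentioned in the introduction can be upgraded: every valuation in $\cV_1$ is sent to $\nu_*$, the loss of Hilbert space methods being compensated precisely by the rigidity $\la_2=\la_1^2$, which rules out the non-dominant direction transverse to $\nu_*$. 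The cases (1) and (2) will correspond to the type of $\nu_*$ in the standard four-fold classification of valuations on the tree (divisorial, irrational, curve, infinitely singular).

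For the skew-product case (1), I expect $\nu_*$ to be of \emph{curve} type, that is, $\nu_*=\ord_C$ for some irreducible algebraic curve $C\subset\C^2$ going to infinity. Total invariance $F^*\nu_*=\la_1\nu_*$ then forces $F^{-1}(C)\subset C$. Since $C$ must be smooth at infinity and the asymptotic degree $\la_1$ is an integer (visible as the ramification of $F$ along $C$), I would choose polynomial coordinates so that $C=\{y=\infty\}$ (the line at infinity in a toric compactification), which turns $F$ into a skew product $F(x,y)=(P(x),Q(x,y))$ of the required form; the equalities $\deg P=\la_1$ and $\deg_y Q=\la_1$ come from the fact that $F$ multiplies the two tangent directions at $C$ by $\la_1$ each, so that their product yields $\la_2=\la_1^2$. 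The condition $\deg A\ge 1$ should then be equivalent to a Jordan block appearing in the action of $F^*$ on the Picard group after sufficient blow-ups at infinity, which gives $\deg(F^n)\sim n\la_1^n$ by a direct computation on the skew product.

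The alternative is that $\nu_*$ is divisorial, $\nu_*=b\,\ord_E$ for some divisor $E$ realized in a model $Y\supset\C^2$. Applying Theorem~A on $Y$ and using the global attraction of $\nu_*$, I would show that the indeterminacy set of the lift $\tF:Y\dashrightarrow Y$ (together with all its forward images) accumulates only at a point $p\in E$, and that $\tF$ is locally a finite holomorphic map near $E\setminus\{p\}$. Performing a finite sequence of weighted blow-ups at $p$ and its iterates — the termination being ensured by the fact that $\nu_*$ is an attracting fixed point of finite skewness — one resolves all indeterminacies while keeping the ambient space smooth outside finitely many quotient singularities; the resulting $X$ makes $\tF$ a morphism, so $\deg(F^n)=\la_1^n\,(E\cdot H)+O(1)$ for any ample class $H$, giving the asymptotics of case (2).

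The main obstacle, in my view, lies in the divisorial case: making the elimination of indeterminacy compatible with \emph{quotient singularities only}, rather than more general ones, will require a careful bookkeeping of the weights attached to $\nu_*$ and its $F$-orbit, together with an explicit control of the local form of $F$ at the attracting point. Establishing the strong attraction property of $\nu_*$ on all of $\cV_1$ under $\la_2=\la_1^2$ — and in particular ruling out irrational and infinitely singular eigenvaluations, which would give no reasonable compactification — is the other delicate point, to be handled by a refined study of the multiplier and the local Jacobian of the valuative dynamics at $\nu_*$.
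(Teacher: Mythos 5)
Your proposal has several serious misconceptions that would prevent it from working.

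\textbf{Curve valuations cannot be eigenvaluations.} You propose that in case (1) the eigenvaluation $\nu_*$ is of curve type, $\nu_*=\ord_C$ for an algebraic curve $C$. But the paper notes explicitly (and it follows from the definitions in \S\ref{sec:V1}) that curve valuations do not belong to the subtree $\cV_1$, so they can never be eigenvaluations. The relevant object in case (1) is a \emph{rational pencil valuation}, which is a \emph{divisorial} valuation with $\a=0$ (see Proposition~\ref{p:nonproper}). Your dichotomy ``curve type for case (1) vs.\ divisorial for case (2)'' is therefore wrong from the start: both cases feature valuations that are divisorial or irrational.

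\textbf{Irrational eigenvaluations are not ruled out.} You assert that a refined study will rule out irrational and infinitely singular eigenvaluations. This is false for the irrational case: the paper's proof of Theorem~C handles precisely the situation where $\cT_F=\{\nu_*\}$ with $\nu_*$ an irrational valuation sitting in the interior of $\cT_{F^2}$, and then builds a toric surface from the two divisorial valuations $\nu,\nu'\in\cT_{F^2}$ adjacent to it. A proof that tried to exclude irrational $\nu_*$ would simply fail.

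\textbf{Circularity.} You begin by ``passing to a sufficiently high iterate and applying Theorem~A.'' But in the paper Theorem~A for maps with $\la_2=\la_1^2$ is \emph{deduced from} the material that proves Theorem~C (see Section~\ref{sec:pfAB}, which cites ``we have already seen in the proof of Theorem~C''). Using Theorem~A as an input here is circular. In fact the whole point of Section~\ref{maximum} is that the Hilbert-space machinery (Theorem~\ref{thm:spectral}) is unavailable when $\la_2=\la_1^2$, so one must instead do a delicate tree-theoretic analysis of the fixed set $\cT_F=\{\nu\in\cV_1\,:\,F_\bullet\nu=\nu\}$, establish that it is either a point or a segment (Proposition~\ref{p:proper}), and then monomialize it by affine changes of coordinates. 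Your sketch of case (2) via ``weighted blow-ups at $p$'' resolving indeterminacies does not capture this; the actual compactification in the paper is built directly as a toric surface from the (now monomial) valuations in $\cT_{F^2}$, and the holomorphy of the lift follows from total invariance of the boundary divisors rather than from a resolution process.

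In short: the high-level idea of classifying by the type of the eigenvaluation is in the right spirit, but the specific valuation types you attach to each case are wrong, you rule out a case that actually occurs, and you invoke Theorem~A in a circular way.
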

Here, and throughout the paper,
the expression ``in suitable affine coordinates'' 
means that the statement holds after conjugation
by a polynomial automorphism of $\C^2$.

In suitable affine coordinates, $X$ can be chosen as a 
toric surface and we can give normal 
forms for all maps occurring in~(2), 
see Section~\ref{subsecl12l2}. 
When $\la_1$ is an integer, $X$ can be chosen as a 
weighted projective plane. 

Theorem~C extends the 
Friedland-Milnor classification~\cite{FriedlandMilnor}
of polynomial automorphisms with $\la_1=1$. For automorphisms, 
only case (2) appears, and $X$ can be chosen as $\P^2$ or
a Hirzebruch surface.
There is an analogous statement
in the general birational surface case, 
see~\cite{DF,giz}: $\deg(F^n)$ is 
then either bounded or grows linearly or quadratically.

\smallskip
Our last result is another application of the dynamical 
compactifications.
One of the basic problems in the iteration of rational maps is the
construction and study of an ergodic measure of maximal entropy.
When $\la_2>\la_1$, such a measure can be defined as a limit of
preimages of a generic point~\cite{FS2,RS,Sib}, and its basic ergodic
properties are completely understood,
see~\cite{briend-duval,DSallure,DSdecay,guedjannals}.  
In the case of maps with small
topological degree $\la_2<\la_1$, this construction fails. 
A different strategy has been proposed for constructing a dynamically
interesting invariant measure, see~\cite{guedjpoly}.
One first constructs two positive 
closed $(1,1)$ currents, invariant by pull-back 
and by push-forward, respectively.
The measure is then obtained by taking their intersection.
In our setting, the existence of these two currents follows
from Theorem~A, see~\cite{DDG1}. The existence of their
intersection, however, 
is not guaranteed in general except if one has a good control 
of the singularities of their potentials. We prove such a control for
the pull-back invariant current.

Fix affine coordinates $(x,y)$ on $\C^2$ and set 
$\| (x,y) \| = \max \{1, |x|, |y| \}$. 
\begin{thmD}
  Let $F: \C^2 \to \C^2$ be a dominant polynomial mapping with
  $\la_2<\la_1$.  Then the limit
  \begin{equation*}
    G^+(p)=\lim_{n\to\infty}\la_1^{-n}\log^+\|F^np\|
  \end{equation*}
  exists locally uniformly on $\C^2$, and defines a continuous,
  nonzero, non-negative plurisubharmonic function of logarithmic
  growth satisfying $G^+\circ F=\la_1 G^+$.  The support of the
  positive closed current $dd^cG^+$ is equal to $\partial K^+$ where
  $K^+=\{G^+=0\}$.  Further, for each $\e>0$, there exists a constant
  $C>0$, such that
  \begin{equation*}
    \log^+\|F^np\|\le(\la_2+\e)^n(\log^+\|p\|+C)\tag{*}
  \end{equation*}
  for all $n\ge0$, and all $p\in K^+$.
\end{thmD}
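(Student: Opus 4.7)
The plan has two stages: construct $G^+$ as the potential of an invariant positive closed current via Theorem~A and Perron--Frobenius, then establish~(*) using the valuative dynamics on $\cV_1$ from \cite{eigenval} together with the hypothesis $\la_2<\la_1$.

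By Theorem~A, after replacing $F$ by an iterate, pick a projective compactification $X\supset\C^2$ with quotient singularities on which $\tF$ is algebraically stable. Under $\la_2<\la_1$, the Hilbert-space method of \cite{deggrowth} gives $\deg(F^n)\sim c\la_1^n$ and isolates $\la_1$ as a simple dominating eigenvalue of $\tF^*$ on $\Pic(X)\otimes\R$. Perron--Frobenius in the nef cone then produces a nef eigenclass $\theta^+$ with $\tF^*\theta^+=\la_1\theta^+$, realized by a positive closed $(1,1)$-current $T^+$ satisfying $\tF^*T^+=\la_1 T^+$. Standard pluripotential theory arguments (cohomological convergence of $\la_1^{-n}(F^n)^*\omega_{FS}$ to a multiple of $T^+$, together with uniform boundedness of the associated potentials) show that
\begin{equation*}
\la_1^{-n}\log^+\|F^np\|\longrightarrow G^+(p)
\end{equation*}
locally uniformly on $\C^2$, where $G^+$ is a continuous plurisubharmonic potential of a positive multiple of $T^+$, normalized to be non-negative with logarithmic growth and to satisfy $G^+\circ F=\la_1 G^+$. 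The function $G^+$ is nonzero by logarithmic growth. The support of $dd^cG^+$ equals $\partial K^+$: it vanishes on $\Int K^+$ because $G^+\equiv0$ there, and on a neighborhood of a boundary point $p_0\in\partial K^+$ where $dd^cG^+$ were to vanish, $G^+$ would be non-negative pluriharmonic attaining the minimum zero at an interior point, contradicting the strong maximum principle since $\{G^+>0\}$ meets that neighborhood.

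The estimate~(*) is the core content. The idea is that $G^+(p)=0$ prevents the orbit of $p\in K^+$ from escaping ``in the direction'' of the attracting eigenvaluation $\nu_*\in\cV_1$; any escape must follow valuations in $\cV_1\setminus\{\nu_*\}$. Concretely, one bounds $\log^+\|F^np\|$ by $\max_E(-\nu_E(\tF^{n*}\phi))$, with $\phi$ an affine coordinate on $\C^2$ and $E$ ranging over the irreducible components of $X\setminus\C^2$, and controls these vanishing orders through the $\tF^*$-dynamics on $\cV_1$. Away from a neighborhood of $\nu_*$, the relevant multiplicative cocycle on $\cV_1$ grows at asymptotic rate at most $\la_2$, because $\la_2<\la_1$ forces the subdominant spectrum of $\tF^*$ to be bounded by $\la_2$; integrating this along the orbit yields $\log^+\|F^np\|\le(\la_2+\e)^n(\log^+\|p\|+C)$.

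The main obstacle is uniformity of $C$ in $p\in K^+$. Pointwise the bound follows once the valuative orbit of $p$ avoids a fixed neighborhood of $\nu_*$; the difficulty is excluding sequences $p_k\in K^+$ whose orbits cluster arbitrarily close to $\nu_*$. This is handled by the attracting basin analysis of $\nu_*$ carried out earlier in the paper, which identifies the basin of attraction at rate $\la_1$ with $\{G^+>0\}$, so $K^+$ is uniformly separated from it in the relevant tree-theoretic sense. Compactness of $X$ and a finite cover of the cluster set of $K^+$-orbits near $X\setminus\C^2$ then convert the pointwise bound into the uniform estimate~(*).
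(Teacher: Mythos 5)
Your high-level outline (use the valuative/compactification structure near infinity to build an invariant partition, and use a multiplicity bound coming from the topological degree to estimate orbits on $K^+$) tracks the paper's strategy, which proceeds via Lemmas~\ref{lem:filtration} and~\ref{lem:compactification}. But two of your central steps are asserted rather than proved, and one of them appears to be wrong as stated.

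First, the convergence. You claim that cohomological convergence $\la_1^{-n}(F^n)^*\omega\to c\theta^+$ together with ``uniform boundedness of the associated potentials'' yields locally uniform convergence of $\la_1^{-n}\log^+\|F^n\cdot\|$ via ``standard pluripotential theory.'' This is not standard: weak convergence of currents plus $L^1_{\mathrm{loc}}$ control of potentials gives at best convergence in $L^1_{\mathrm{loc}}$, and continuity of the limit needs a genuine argument. The actual mechanism in the paper is the filtration of Lemma~\ref{lem:filtration}: the partition $\C^2=V\cup V^+$ with $FV^+\subset V^+$, the explicit rigid normal form of $\tF$ at the superattracting fixed point $p$ at infinity (obtained from Theorem~\ref{thm:stability1}, which applies since $\nu_*$ is nondivisorial when $\la_2<\la_1$), and the bootstrap \`a la Bedford--Smillie~\cite[Prop.~3.4]{BS1}. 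On $V^+$ one gets locally uniform convergence to an explicitly pluriharmonic strictly positive limit directly from the normal form; on $K^+$ one uses the filtration inequality to force $G^{+*}\equiv0$. Your proposal elides this construction.

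Second, and more seriously, the estimate~(*). You bound the relevant vanishing orders ``because $\la_2<\la_1$ forces the subdominant spectrum of $\tF^*$ to be bounded by $\la_2$.'' This is unjustified, and it is not the bound that holds. What the spectral theory of~\cite{deggrowth} gives (cf.\ Theorem~\ref{thm:spectral}) is that on the orthogonal complement of the Perron--Frobenius direction the operator norm is controlled by $\sqrt{\la_2}$, not by $\la_2$; and in any case, the estimate~(*) is not derived from a linear-algebraic bound on $\tF^*$. The paper's argument in Lemma~\ref{lem:compactification} is local-geometric: for a prime $E\subset Z^-$ (i.e., not eventually contracted to the superattracting point $p$), the coefficient of $E$ in the pullback divisor $G^*E_i$ is dominated by the local topological degree of $F^{n_0}$ near $E$, which is $\le\la_2^{n_0}$; this directly gives $\ord_E(F^{n_0*}L)\le\gamma\la_2^{n_0}\ord_E(L)$. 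The uniformity of $C$ in $p\in K^+$ then comes for free from the $F$-invariant partition and the compactness of $Z^-$, not from a separate ``cluster-set'' argument as you sketch. You should replace the spectral-radius heuristic with the local topological degree bound, and replace the basin-of-attraction hand-wave with the explicit construction of the open, $F$-forward-invariant set $V^+$ as the preimage under $\tF^{n_0}$ of a small polydisk around $p$.
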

Theorem~D generalizes classical properties of the Green function
both of polynomials in one variable and of H{\'e}non maps~\cite{BS1,FS1,HO}. 
On the locus $\{G^+ =0 \}$ the dynamics can
exhibit various speeds of convergence towards infinity, see~\cite{DDS}.
Note that~(*) is in sharp contrast with the phenomenon
described in~\cite{Vigny}. 

\smallskip
The paper is organized as follows. In Sections~1 and~2 we discuss
the relationship between compactifications and valuations, 
and study the induced dynamics on the space of valuations
at infinity. We then turn to the proof
of refined versions of Theorems~A and~B
in the case $\la_2<\la_1^2$: 
in Section~\ref{stab:nondiv} when the eigenvaluation 
is nondivisorial and in Section~\ref{stab:div} 
when it is divisorial. 
Polynomial maps with $\la_2=\la_1^2$ are handled
in Section~\ref{maximum} where we prove Theorem~C.
Theorems~A and~B are proved in Section~\ref{sec:pfAB}, 
Theorem~D in Section~\ref{green}, where we also 
provide a list of examples of maps with $\la_2=\la_1$.
The paper ends with a short appendix outlining an 
adaptation of the necessary material from~\cite{deggrowth} 
to our setting.
\begin{ackn}
  We thank S\'ebastien Boucksom, Serge Cantat 
  and Vincent Guedj for their comments
  on a preliminary version of this paper,
  and the referee for useful suggestions.
\end{ackn}
%
%
%
%
\section{Geometry at infinity}\label{geom-infty}
We start by discussing compactifications of $\C^2$ 
together with valuations centered at infinity.
%
%
\subsection{Admissible compactifications}
We consider $\C^2$ equipped with a 
fixed embedding into $\P^2$.
\begin{defi}\label{defi:admissible}
  An \emph{admissible compactification} of $\C^2$ is 
  a smooth projective surface $X$ admitting a birational 
  morphism $\pi:X\to\P^2$ that is an isomorphism 
  above $\C^2$.
\end{defi}
It follows that $\pi$ is a composition of point blowups
and that $X\setminus\C^2$ is a connected curve
with simple normal crossings. 
The \emph{primes} of $X$ are the irreducible components
of $X\setminus\C^2$. Every $X$ contains a special prime
$L_\infty$, the strict transform of the line at infinity 
in $\P^2$.
%
%
\subsection{Valuations~\cite[Appendix A]{eigenval}} 
Let $R$ be the coordinate ring of $\C^2$.
We define $\hcV_0$ as the set of
valuations $\nu:R\to(-\infty,+\infty]$ \emph{centered at infinity}, 
\ie $\nu(L)<0$ for a generic affine function $L$ on $\C^2$.
If $\nu\in\hcV_0$ and $X$ is an admissible compactification 
of $\C^2$, then the \emph{center} of $\nu$ on $X$ is the
unique scheme-theoretic point on $X$ such that
$\nu$ is strictly positive on the maximal ideal of its
local ring. Thus the center is either a prime of $X$ or
a point on $X\setminus\C^2$. 
We let $\cV_0$ be the subset of $\nu\in\hcV_0$ that are
\emph{normalized} by $\nu(L)=-1$.

There are four kinds of valuations in $\hcV_0$ that we now describe.

First, if $X$ is an admissible compactification of $\C^2$, 
then each prime $E$ of $X$ defines a 
\emph{divisorial} valuation $\ord_E\in\hcV_0$, the order of
vanishing along $E$. In particular, $\ord_{L_\infty}=-\deg$.
Any valuation proportional to some $\ord_E$ will
also be called divisorial.
Thus a valuation is nondivisorial iff its 
center on every admissible compactification is a point.
If we set $b_E=-\ord_E(L)$ for a generic affine function $L$,
then $\nu_E\=b_E^{-1}\ord_E$ is normalized.
We denote by $\hcVdiv$ and $\cVdiv$ 
the set of divisorial valuations in $\hcV_0$ and $\cV_0$,
respectively.

Second, we have \emph{irrational} valuations.
To define them, consider any two primes $E,E'$ in $X$ intersecting 
at a point $p$ and local coordinates $(z,w)$ at $p$ 
such that $E=\{z=0\}$ and $E'=\{w=0\}$. 
To any pair $(s,t)\in\R_+^2$ we attach the valuation $\nu$
defined on the ring $\cO_p$ 
of holomorphic germs at $p$ by 
$\nu(\sum a_{ij}z^iw^j)= \min\{si+tj\ |\ a_{ij}\neq 0\}$;
it does not depend on the choice of coordinates $(z,w)$.
By first extending $\nu$ to 
the common fraction field $\C(X)$ of $\cO_p$ and $R$, 
then restricting to $R$, we obtain a valuation
in $\hcV_0$, called \emph{quasimonomial}.
(It is monomial in the local coordinates $(z,w)$ at $p$.)
The valuation $\nu$ is normalized iff
$sb_E + tb_{E'}=1$.
It is divisorial iff either $t=0$ or the ratio $s/t$ 
is a rational number.
Any divisorial valuation is quasimonomial.
An irrational valuation is by definition a  
nondivisorial quasimonomial valuation.

Third, pick a point $p$ on the line at infinity 
$L_\infty\subset\P^2$,
a formal irreducible curve $C$ at $p$, not contained in 
$L_\infty$, and a constant $\gamma>0$. 
Then $P\mapsto\gamma^{-1}\ord_p(P|_C)$,
for polynomials $P\in R$, defines
a valuation in $\hcV_0$ called a \emph{curve valuation}.
It is normalized iff $\gamma$ is chosen as
the intersection number $(C\cdot L_\infty)$.
Note that a curve valuation can take the value
$+\infty$ when the curve is algebraic.

In suitable affine coordinates $(x,y)$, 
a curve valuation can be computed 
using a Puiseux parameterization $y=h(x^{-1})$ of the curve:
the value on a polynomial $P$ is proportional to
$\ord_{x=\infty}P(x,h(x^{-1}))$.
Now replace the Puiseux series $h$ by a 
formal series of the form
$h(\zeta)=\sum a_k\zeta^{\beta_k}$ with $a_k\in\C^*$, 
and $\beta_k$ an increasing sequence of rational numbers 
with unbounded denominators. 
Then $P\mapsto\ord_{x=\infty}P(x,h(x^{-1}))$ defines
a valuation of the fourth and last kind, 
namely an \emph{infinitely singular valuation}.
%
%
\subsection{Tree structure~\cite[Appendix A]{eigenval}} 
The space $\cV_0$ of normalized valuations is equipped with
a partial ordering: $\nu\le\mu$ iff $\nu(P)\le\mu(P)$ 
for all $P\in R$, naturally turning it into
a rooted \emph{tree}. In particular, every two elements 
$\mu,\nu\in\cV_0$ admit a minimum $\mu\wedge\nu\in\cV_0$.
The valuation $-\deg$ is the minimal element of $\cV_0$.
For any admissible compactification $X$, one can map a prime $E$
to the normalized valuation $\nu_E\=b_E^{-1} \ord_E$. 
In this way, we get 
an embedding of the set of primes of $X$ into $\cV_0$. 
The partial ordering coming from $\cV_0$ on the set of primes
coincides with the one coming from the tree structure of the dual 
graph of $X\setminus \C^2$.

The \emph{ends} of $\cV_0$, \ie the maximal elements in the
partial ordering, are the curve and infinitely singular valuations,
\ie the valuations that are not quasimonomial.

We topologize $\cV_0$ by declaring $\nu_n\to\nu$ iff 
$\nu_n(P)\to\nu(P)$ for all $P$. This topology is compact
and admits two important characterizations.
First, it is the weakest topology such that the natural
retraction map $r_I:\cV_0\to I$ is continuous for any
given closed segment $I\subset\cV_0$.
Second, given any admissible compactification $X$
and any point $p\in X\setminus\C^2$, let  
$U(p)\subset\cV_0$ be the set of valuations 
whose center on $X$ is $p$. 
Then the topology on $\cV_0$ is the weakest one in
which $U(p)\subset\cV_0$ is always open.
Each of the four subsets of divisorial, irrational, 
curve and infinitely singular valuations is dense in $\cV_0$. 

There is a unique, decreasing, upper semicontinuous 
\emph{skewness} function $\a:\cV_0\to [-\infty,1]$ satisfying 
$\a(-\deg)=1$ and $|\a(\nu_E)-\a(\nu_{E'})|=(b_Eb_{E'})^{-1}$
whenever $E$ and $E'$ are intersecting primes in some
admissible compactification $X$. See~\cite[{\S}A.1]{eigenval} 
and~\cite[\S6.6.3, \S6.8]{valtree}.
One has $b_E^2\a(\nu_E)\in\Z$, see Lemma~\ref{lem:fundsol}.

Similarly, there is a unique, increasing, lower semicontinuous
\emph{thinness} function $A:\cV_0\to[-2,+\infty]$ such that
$A(\nu_E)=a_E/b_E$, where $a_E=1+\ord_E(dx\wedge dy)$
and $b_E=-\ord_E(L)$ as above, see~\cite[{\S}A.1]{eigenval}.
An irrational valuation has irrational skewness and thinness.
%
%
\subsection{The subtree $\cV_1$}\label{sec:V1}
Define $\cV_1$ as the set of valuations $\nu\in\cV_0$ 
with skewness $\a(\nu)\ge0$ and thinness $A(\nu)\le0$. 
Then $\cV_1$ is a subtree of $\cV_0$ of crucial
importance to our study. Its properties are
spelled out in detail in~\cite[Theorem~A.7]{eigenval}.
Here we note the fundamental fact is that any quasimonomial 
valuation in $\cV_1$ is dominated by a \emph{pencil valuation}. 

To define the latter, consider an affine curve 
$C=\{P=0\}\subset\C^2$ with \emph{one place at infinity}, 
that is, the closure of $C$ in $\P^2$ is an irreducible curve 
intersecting the line at infinity in a single point and is 
locally irreducible there. 
Consider the pencil $|C|$ consisting of the affine
curves $C_\la=\{P=\la\}\subset\C^2$ for $\la\in\C$.
It is a theorem by Moh (see~\cite{Moh,CPR}) that
$C_\la$ has one place at infinity for every $\la\in\C$. 
The (normalized) pencil valuation $\nu_{|C|}\in\cV_0$ 
associated to $|C|$ is then defined by 
$\nu_{|C|}(Q)\=(\deg C)^{-1}\ord_\infty(Q|_{C_\la})$, 
for $\la$ generic.
By B{\'e}zout, $-\nu_{|C|}(Q)\deg(C)$ 
equals the number of intersection points in $\C^2$ of 
$C$ with the curve $\{Q=\rho\}$ for generic $\rho\in\C$.
See~\cite[\S A.2]{eigenval} for more details.

This crucial fact that any quasimonomial valuation in $\cV_1$
is dominated by a pencil valuation is the affine
analog of the (easier) 
local result that any quasimonomial valuation
in the valuative tree is dominated by a curve valuation,
see~\cite[Proposition~3.20]{valtree}.

A pencil valuation $\nu_{|C|}$ is divisorial but 
does not itself belong to $\cV_1$ in general. 
Indeed, we have $\a(\nu_{|C|})=0$ and
$A(\nu_{|C|})=(2g-1)/\deg(C)$, where $g$ 
is the geometric genus of the closure of
a generic curve $C_\lambda$ in the pencil.
Thus $\nu_{|C|}$ belongs to $\cV_1$ iff 
$C_\lambda\simeq\C$ for generic $\lambda$. 
We then call $\nu_{|C|}$ \emph{rational pencil valuation}.

It follows that if $\nu$ is a quasimonomial 
valuation in $\cV_1$, then either
$\a(\nu)>0$ or $\nu$ is a rational pencil valuation.
%
%
\subsection{Tight compactifications}\label{sec:tight}
Associated to the subtree $\cV_1$ is an important 
class of compactifications of $\C^2$.
\begin{defi}\label{defi:tight}
  An admissible compactification $X$ of $\C^2$ 
  is \emph{tight} if the normalized divisorial 
  valuations associated to the primes of $X$ 
  belong to the subtree $\cV_1$ of $\cV_0$.
\end{defi}
\begin{rmk}
  An admissible compactification $X$ is tight iff 
  $\ord_E(dx \wedge dy)<0$ and $\ord_E(P) \le 0$
  for any prime $E$ of $X$ and 
  any  polynomial $P \in \C[x,y]$. 
  The second condition is 
  equivalent to $Z_{\ord_E}$ being nef, 
  see~\S\ref{sec:classval}; this
  implies that the nef and psef cones of 
  an admissible compactification $X$ of $\C^2$ 
  are simplicial whenever 
  $Z_{\ord_E}$ is nef for every prime $E$ of $X$.
  A compactification of $\C^2$ associated to a curve 
  with one place at infinity as defined in~\cite{CPR} always
  has the latter property and is tight 
  iff the curve is rational.
\end{rmk}
\begin{lem}\label{lem:staytight}
  Let $X$ be a tight compactification 
  of $\C^2$. Pick a point $p\in X\setminus\C^2$
  and let $X'$ be the admissible compactification
  of $\C^2$ obtained by blowing up $p$.
  Then $X'$ is tight iff $p$ does not lie
  on a unique prime of $X$,
  whose associated normalized divisorial valuation has skewness
  zero or thinness zero.
\end{lem}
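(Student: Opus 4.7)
The strategy is to reduce tightness of $X'$ to a single condition on the new exceptional prime $E'$, and then split into cases according to whether $p$ lies on one or two primes of $X$. First, since a prime of $X$ and its strict transform in $X'$ define the same divisorial valuation on $\C(X)$, tightness of $X$ guarantees that every prime of $X'$ distinct from $E'$ already has its normalized valuation in $\cV_1$. Therefore $X'$ is tight if and only if $\nu_{E'}\in\cV_1$.

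Suppose $p=E_1\cap E_2$ is a satellite point. Then $\nu_{E'}$ is a monomial valuation in local coordinates at $p$, and as such lies on the tree segment $[\nu_{E_1},\nu_{E_2}]\subset\cV_0$, which parametrizes the normalized monomial valuations at $p$. Since $\a$ is decreasing and $A$ is increasing away from the root $-\deg$, the sublevel sets $\{\a\ge0\}$ and $\{A\le0\}$ are convex subtrees of $\cV_0$ containing $-\deg$, and hence so is their intersection $\cV_1$. Both endpoints $\nu_{E_1},\nu_{E_2}$ belong to $\cV_1$ by tightness of $X$; the entire segment therefore lies in $\cV_1$, and in particular $\nu_{E'}\in\cV_1$. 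Thus $X'$ is automatically tight in this case.

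Suppose instead that $p$ lies on a unique prime $E$ of $X$. The local analysis of the blowup of a free point on $E$ yields $b_{E'}=b_E$ and $a_{E'}=a_E+1$. Combined with the adjacency formula $|\a(\nu_E)-\a(\nu_{E'})|=(b_Eb_{E'})^{-1}$ and the fact that $\nu_{E'}>\nu_E$ in the tree (since $E'$ is obtained by further blowup), this gives
\[
\a(\nu_{E'})=\a(\nu_E)-b_E^{-2},\qquad A(\nu_{E'})=A(\nu_E)+b_E^{-1}.
\]
Hence $\nu_{E'}\in\cV_1$ if and only if $\a(\nu_E)\ge b_E^{-2}$ and $A(\nu_E)\le -b_E^{-1}$. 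Using the integrality $b_E^2\a(\nu_E)\in\Z$ (Lemma~\ref{lem:fundsol}) and $a_E=b_E A(\nu_E)\in\Z$, together with $\a(\nu_E)\ge0$ and $A(\nu_E)\le0$ from tightness of $X$, these two conditions are respectively equivalent to $\a(\nu_E)\ne0$ and $A(\nu_E)\ne0$. Combining with the satellite case yields the equivalence in the statement.

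The main technical input consists in the local blowup formulas for $b_{E'}$ and $a_{E'}$, together with the integrality of $b_E^2\a(\nu_E)$; once these are in hand, the satellite case follows directly from convexity of $\cV_1$, and the free case from the sharpness of the integer bounds on $\a$ and $A$.
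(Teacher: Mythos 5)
Your proof is correct and follows essentially the same route as the paper: reduce to checking $\nu_{E'}\in\cV_1$, dispatch the satellite case by the subtree property of $\cV_1$, and in the free case use the increments $\a(\nu_{E'})=\a(\nu_E)-b_E^{-2}$, $A(\nu_{E'})=A(\nu_E)+b_E^{-1}$ together with the integrality of $b_E^2\a(\nu_E)$ and $b_EA(\nu_E)$ to show that $\nu_{E'}\in\cV_1$ iff $\a(\nu_E)>0$ and $A(\nu_E)<0$. The only cosmetic difference is that you derive the increment formulas from the local blowup identities $b_{E'}=b_E$, $a_{E'}=a_E+1$, which the paper states directly.
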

\begin{proof}
  Let $\nu\in\cV_0$ be the divisorial valuation associated to
  the blowup of $p$. Then $X'$ is tight iff $\nu\in\cV_1$.
  First assume $p$ is the intersection point between
  two primes $E_1$, $E_2$ of $X$
  with associated divisorial valuations $\nu_1,\nu_2\in\cV_0$.
  Then $\nu$ lies in the segment between $\nu_1$ and
  $\nu_2$. Since $\nu_i\in\cV_1$ and $\cV_1$ is a subtree, we get
  $\nu\in\cV_1$, so that $X'$ is tight.
  
  Now assume $p$ lies on a single prime 
  $E$ of $X$, with associated divisorial
  valuation $\nu_E\in\cV_1$. In this case, $\nu>\nu_E$. 
  Let $b_E=-\ord_E(L)$ as above. Then $A(\nu)-A(\nu_E)=1/b_E$ and
  $\a(\nu)-\a(\nu_E)=-1/b_E^2$. On the other hand,
  $b_E^2\a(\nu_E),b_EA(\nu_E)\in\Z$. 
  Hence $\nu\in\cV_1$ iff $A(\nu_E)<0$ and $\a(\nu_E)>0$.
\end{proof}
\begin{cor}\label{cor:tight}
  Let $\nu\in\cV_1$ and
  define a sequence of admissible compactifications 
  $(X_m)_{m\ge0}$ of $\C^2$ as follows: $X_0=\P^2$ and
  $X_{m+1}$ is obtained from $X_m$ by blowing up 
  the center $p_m$ of $\nu$ on $X_m$.
  Then $X_m$ is tight for all $m$.
\end{cor}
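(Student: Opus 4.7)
The plan is to proceed by induction on $m\ge0$, applying Lemma~\ref{lem:staytight} at each step. The base case is immediate: $X_0=\P^2$ has a unique prime $L_\infty$, whose associated normalized divisorial valuation is $-\deg$, and $\alpha(-\deg)=1>0$, $A(-\deg)=-2<0$, so $-\deg\in\cV_1$ and $X_0$ is tight.

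For the inductive step, assume $X_m$ is tight and let $p_m$ be the center of $\nu$ on $X_m$; we may assume $p_m$ is a closed point, since otherwise $\nu$ equals $\nu_F$ for some prime $F$ of $X_m$, the construction stabilizes, and there is nothing to prove. If $p_m$ is the intersection of two primes of $X_m$, Lemma~\ref{lem:staytight} directly yields tightness of $X_{m+1}$. Otherwise $p_m$ lies on a unique prime $E$ of $X_m$, and we must show $\alpha(\nu_E)>0$ and $A(\nu_E)<0$. The key point is that $p_m$ is a closed point properly specializing the generic point of $E$, so the valuation $\nu$ strictly dominates $\nu_E$ in the partial order on $\cV_0$; in particular, the segment $[\nu_E,\nu]$ is nontrivial. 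Using that $\alpha$ is strictly decreasing and $A$ strictly increasing along any nontrivial upward segment of the tree (a consequence of the defining formulas $|\alpha(\nu_E)-\alpha(\nu_{E'})|=(b_Eb_{E'})^{-1}$ for adjacent primes and the continuous interpolation on segments), together with the assumption $\nu\in\cV_1$, we obtain
\begin{equation*}
  \alpha(\nu_E)>\alpha(\nu)\ge0
  \quad\text{and}\quad
  A(\nu_E)<A(\nu)\le0.
\end{equation*}
Lemma~\ref{lem:staytight} then gives that $X_{m+1}$ is tight, closing the induction.

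The only genuine subtlety — which is where I expect to spend most care — is justifying the strict dominance $\nu>\nu_E$ from the fact that the center of $\nu$ on $X_m$ is a closed point of $E$. Everything else is bookkeeping: the base case is trivial, the two-prime case of the induction is Lemma~\ref{lem:staytight} verbatim, and the one-prime case reduces to strict monotonicity of $\alpha$ and $A$, which is built into the tree structure of $\cV_0$. Once the strict dominance is in hand, the strict inequalities $\alpha(\nu_E)>0$ and $A(\nu_E)<0$ follow automatically from $\nu\in\cV_1$, and the corollary drops out.
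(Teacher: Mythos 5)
Your proof is correct and follows essentially the same route as the paper's: reduce to Lemma~\ref{lem:staytight}, handle the one‑prime case by observing that the center of $\nu$ being a closed point on a unique prime $E$ forces $\nu>\nu_E$ strictly, and then use monotonicity of $\alpha$ and $A$ together with $\nu\in\cV_1$ to rule out $\alpha(\nu_E)=0$ or $A(\nu_E)=0$. The paper packages the last step slightly differently (arguing by contradiction via the divisorial valuations $\nu_k$ of the exceptional divisors and the explicit jumps $\alpha(\nu_{k})-\alpha(\nu_E)=-1/b_E^2$, $A(\nu_{k})-A(\nu_E)=1/b_E$ already computed in Lemma~\ref{lem:staytight}, rather than invoking strict monotonicity of $\alpha$ and $A$ directly), but this is a cosmetic difference and both versions rest on the same facts.
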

\begin{proof}
  In view of Lemma~\ref{lem:staytight} we only have to show 
  that $p_m$ never lies on a unique prime $E$ of $X_m$ 
  whose associated normalized
  valuation $\nu_E\in\cV_0$ has skewness zero or thinness zero. 
  But if it did, the valuation $\nu_k\in\cV_0$ associated to
  $p_k$ would satisfy $\nu_k>\nu_E$ for all $k>m$.
  This would imply $\nu>\nu_E$ so that $\a(\nu)<0$ or $A(\nu)>0$, 
  contradicting $\nu\in\cV_1$.
\end{proof}
%
%
%
%
\section{Valuative dynamics}\label{val-dyn}
Consider a dominant polynomial mapping $F:\C^2\to\C^2$. 
%
%
\subsection{Induced map on valuations}\label{sec:indmap}
Let $\nu\in\hcV_0$ be a valuation centered at infinity 
and set $d(F,\nu)\=-\nu(F^*L)\ge0$ for a generic 
affine function $L$ on $\C^2$. 
Define a valuation $F_*\nu$ by
$F_*\nu=0$ if $d(F,\nu)=0$ and 
$F_*\nu(P)=\nu(F^*P)$ if $d(F,\nu)>0$.
In the former case, note that $\nu(F^*P)=0$ for a generic polynomial.
In the latter case, $F_\bullet\nu\=F_*\nu/d(F,\nu)$ is a well-defined
normalized valuation in $\cV_0$.

We have the following valuative criterion for properness of maps.
It is a consequence of~\cite[Proposition~7.2]{eigenval}.
\begin{prop}\label{prop:proper-crit}
  When $F$ is not proper, one can find a divisorial valuation $\nu$
  such that $d(F,\nu) =0$. 
  When $F$ is proper, there exists a constant $c>0$ 
  such that $d(F,\nu) \ge c$ for all $\nu \in \cV_0$.
\end{prop}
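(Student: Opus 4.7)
The plan is to exploit the explicit formula, valid when $F=(F_1,F_2)$:
\[
d(F,\nu)=\max\bigl(-\nu(F_1),-\nu(F_2),0\bigr).
\]
This follows by writing $F^*L=a_1F_1+a_2F_2+a_3$ and observing that for generic $(a_1,a_2,a_3)$ no cancellation of leading $\nu$-terms occurs, so $\nu(F^*L)=\min(\nu(F_1),\nu(F_2),0)$. In particular $d(F,\nu)\ge 0$, with equality iff $F_1$ and $F_2$ lie in the valuation ring of $\nu$; moreover $d(F,\cdot)$ is continuous on $\cV_0$ as a max of functions continuous by the definition of the topology.

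For the first assertion, suppose $F$ is not proper and choose $p_n\in\C^2$ with $p_n\to\infty$ and $F(p_n)\to q\in\C^2$. Fix an admissible compactification $X$ on which $\tF:X\to\P^2$ is a morphism (resolving the indeterminacy of the rational lift) and extract a subsequence $p_n\to p_\infty\in X\setminus\C^2$; by continuity $\tF(p_\infty)=q$. Blowing up $p_\infty$ yields an admissible model $X'$ in which the new exceptional prime $E$ is contracted by the lift of $\tF$ to the single point $q\in\C^2$. Thus $F^*L|_E\equiv L(q)$ for every affine $L$, which gives $\ord_E(F^*L)=0$ for generic $L$, and hence $d(F,\ord_E)=0$, so the divisorial valuation $\ord_E$ does the job.

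For the second assertion, assume $F$ is proper. Continuity of $d(F,\cdot)$ and compactness of $\cV_0$ reduce the claim to showing $d(F,\nu)>0$ for every $\nu\in\cV_0$. Suppose for contradiction $d(F,\nu_0)=0$ for some $\nu_0$, so $\nu_0(F_i)\ge 0$ for both $i$. One then exhibits a sequence $p_n\in\C^2$ with $p_n\to\infty$ and $F(p_n)$ bounded, contradicting properness. If $\nu_0$ is divisorial, proportional to $\ord_E$ for some prime $E$ of an admissible model, then $F_1,F_2$ are regular along a generic point of $E$ and the sequence is obtained by approaching such a point from within $\C^2$. In the quasimonomial case one picks local coordinates $(u,v)$ at the center of $\nu_0$ in which $\nu_0$ is monomial with $\nu_0(u)=s,\nu_0(v)=t$, and takes the sequence corresponding to $(u_n,v_n)=(\epsilon_n^s,\epsilon_n^t)$ for $\epsilon_n\downarrow 0$; since $\nu_0(F_i)\ge 0$, the leading $\epsilon$-exponent of $F_i(u_n,v_n)$ is non-negative, so $F(p_n)$ stays bounded while $p_n\to\infty$ in $\C^2$. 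Curve and infinitely singular valuations are handled analogously via a parametrization of the supporting formal curve.

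The main obstacle is making the last step uniform in the type of valuation $\nu_0$: translating the algebraic hypothesis $\nu_0(F_i)\ge 0$ into the existence of a geometric approaching sequence requires a careful case analysis according to whether $\nu_0$ is divisorial, irrational, curve, or infinitely singular. This valuative characterization of properness is precisely the content of~\cite[Proposition~7.2]{eigenval}, from which the present proposition is immediate.
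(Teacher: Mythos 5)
Your proposal is correct and matches the paper's treatment: the paper gives no independent argument, simply stating that the proposition ``is a consequence of \cite[Proposition~7.2]{eigenval},'' and your sketch ultimately defers to the same citation for the key valuative case analysis. The supporting material you supply --- the formula $d(F,\nu)=\max\bigl(0,-\nu(F_1),-\nu(F_2)\bigr)$ for generic affine $L$, continuity of $d(F,\cdot)$ on the compact space $\cV_0$, and the blow-up of the cluster point $p_\infty$ in the non-proper case --- is correct and fills in what the paper leaves implicit.
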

When $\nu\in\hcVdiv$ is divisorial and $d(F,\nu)>0$, the
valuation $\nu'\=F_*\nu\in\hcVdiv$ is also divisorial. 
More precisely, given any two admissible compactifications 
$X$, $X'$ of $\C^2$ such that the centers of $\nu$ and $\nu'$
are primes $E$ and $E'$ of $X$ and $X'$, respectively
we can write $\nu=t\ord_E$ and $\nu'=t'\ord_{E'}$.
Then $t'/t$ is the coefficient of $E$ in $\tF^*E'$,
where $\tF:X\dashrightarrow X'$ is the lift of $F$.

The following result, proved in~\cite[\S 7.2]{eigenval}
allows us to do dynamics on the subtree $\cV_1$ of $\cV_0$,
even when $F$ is not proper.
\begin{prop}\label{prop:V1invariant}
  We have $d(F,\cdot)>0$ on $\cV_1$ and 
  $F_\bullet$ leaves $\cV_1$ invariant.
\end{prop}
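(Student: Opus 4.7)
\emph{Plan.} The proposition combines two claims---positivity $d(F,\nu)>0$, and invariance $F_\bullet\nu\in\cV_1$, both for all $\nu\in\cV_1$---and I would handle both by first verifying them on divisorial valuations (where tight compactifications make computations explicit, via Corollary~\ref{cor:tight}) and then extending to all of $\cV_1$ by density, together with the (semi)continuity of $\alpha$, $A$ and $d(F,\cdot)$.

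For positivity, I would use the pencil-valuation characterization from~\S\ref{sec:V1}: every quasimonomial $\nu\in\cV_1$ is dominated by a pencil valuation $\nu_{|C|}$ (or coincides with one). By definition, $\nu_{|C|}(F^*L)=(\deg C)^{-1}\ord_\infty\bigl((L\circ F)|_{C_\lambda}\bigr)$ for generic $\lambda$. Dominance of $F$ prevents $F$ from collapsing every $C_\lambda$---otherwise $F$ would factor through the one-variable pencil parameter and have $1$-dimensional image---so for generic $\lambda$ the restriction $(L\circ F)|_{C_\lambda}$ is a nonconstant regular function on the affine curve $C_\lambda$. Since $C_\lambda$ has a unique point at infinity, this function must have a pole there, so $\nu_{|C|}(F^*L)<0$, and the inequality $\nu\le\nu_{|C|}$ gives $\nu(F^*L)\le\nu_{|C|}(F^*L)<0$, i.e.\ $d(F,\nu)>0$.

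For invariance, after reducing by density to divisorial $\nu=b_E^{-1}\ord_E$ on a tight compactification $X$, one writes $F_\bullet\nu=b_{E'}^{-1}\ord_{E'}$ on a suitable $X'$. From the identity $F^*(dx'\wedge dy')=\jac(F)\,dx\wedge dy$ and the local form of the lift $\tF$ near $E$, a direct computation yields the thinness transformation formula
\[
A(F_\bullet\nu)=\frac{A(\nu)+\nu(\jac F)}{d(F,\nu)}.
\]
Tightness of $X$ forces $\nu(\jac F)\le 0$ (by the remark after Definition~\ref{defi:tight} applied to the polynomial $\jac F$), and combined with $A(\nu)\le 0$ and $d(F,\nu)>0$ this gives $A(F_\bullet\nu)\le 0$. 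A parallel intersection-theoretic argument---using the Zariski-type decomposition of divisors at infinity on $X$ and $X'$ that underlies the skewness function---should give $\alpha(F_\bullet\nu)\ge 0$ from $\alpha(\nu)\ge 0$. Extension to all of $\cV_1$ then follows from density, combined with upper semicontinuity of $\alpha$ and lower semicontinuity of $A$.

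The main obstacle will be the skewness transformation formula: unlike thinness, which rests on the single relation $F^*(dx'\wedge dy')=\jac(F)\,dx\wedge dy$, skewness depends more subtly on the global intersection geometry of $X$ and $X'$ and on the behavior of $\tF$ near its indeterminacy points. A secondary delicacy is the boundary of $\cV_1$ where $\alpha=0$ or $A=0$: here the characterization of such boundary quasimonomial valuations as rational pencil valuations is essential to ensure that $F_\bullet$ does not push $\nu$ out of $\cV_1$, and to keep $d(F,\nu)$ uniformly bounded away from zero when passing to the ends of the tree.
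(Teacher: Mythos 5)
The paper itself does not prove this proposition---it cites \cite[{\S}7.2]{eigenval}---but the appendix sets up the machinery (the Weil classes $Z_\nu$, Lemma~\ref{lem:nefcrit}, Lemma~\ref{lem:pushpush}, and the fact that $F_*$ preserves nef classes on $\Ltwo(\fX)$) that the intended argument relies on. Measured against that, your proposal is correct in outline for the thinness half but has two genuine gaps, one of which you flag yourself.

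For positivity, the pencil-valuation argument is sound for quasimonomial $\nu\in\cV_1$, and the inequality $\nu\le\nu_{|C|}\Rightarrow\nu(F^*L)\le\nu_{|C|}(F^*L)<0$ is the right mechanism. But $\cV_1$ also contains infinitely singular valuations, which are ends of the tree and are \emph{not} dominated by any pencil valuation, so this case is not covered. Your fallback---density plus semicontinuity---cannot close it: $d(F,\cdot)=-(\cdot)(F^*L)$ is decreasing along segments, so at an end $\nu$ the value $d(F,\nu)$ is the infimum of $d(F,\mu)$ over quasimonomial $\mu<\nu$, and a limit of positive numbers need only be nonnegative. Strict positivity at the ends requires a separate argument (for instance a quantitative lower bound on $d(F,\cdot)$ on the segment $[-\deg,\nu]$), which the proposal does not supply.

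For invariance, your treatment of thinness via the Jacobian formula $A(F_\bullet\nu)=(A(\nu)+\nu(JF))/d(F,\nu)$ is exactly right, and the input $\nu(JF)\le0$ does follow for $\nu\in\cV_1$ (this is really the statement that $\alpha(\nu)\ge0$ forces $\nu(P)\le0$ for every polynomial $P$, which is part of Lemma~\ref{lem:nefcrit}; one need not pass through divisorial valuations and density, as the formula holds directly on $\cV_1$). The skewness half, however, you explicitly leave open, and that is the real gap: you never show $\alpha(F_\bullet\nu)\ge0$. There is in fact no pointwise transformation formula for $\alpha$ analogous to the Jacobian formula, so hunting for one is the wrong move. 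The clean route is global and intersection-theoretic, using precisely the appendix: $\alpha(\nu)\ge0$ iff $Z_\nu$ is nef (Lemma~\ref{lem:nefcrit}); $F_*Z_\nu=Z_{F_*\nu}$ (Lemma~\ref{lem:pushpush}, which needs $d(F,\nu)>0$ already in hand); and $F_*$ preserves nef classes. Combining these gives $Z_{F_\bullet\nu}$ nef and hence $\alpha(F_\bullet\nu)\ge0$ immediately. Without some replacement for this step your proof of invariance is incomplete.
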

The map $F_\bullet$ preserves the tree structure on 
$\cV_1$ in the sense that small segments are mapped
homeomorphically onto small segments. 
See~\cite[Theorem~7.4]{eigenval} for a precise statement.
%
%
\subsection{Eigenvaluations}\label{sec:eigenval}
Define the \emph{asymptotic degree} of $F$ by
$\la_1\=\lim_{n\to\infty}(\deg F^n)^{1/n}$.
The following result was proved in~\cite[\S 7.3]{eigenval}.
\begin{prop}\label{prop:eigenval}
  There exists a valuation $\nu_*$ belonging to the
  subtree $\cV_1$ of $\cV_0$ such that $F_*\nu_*=\la_1\nu_*$. 
  In particular, $\a(\nu_*)\ge0$ and $d(F,\nu_*)=\la_1$.
\end{prop}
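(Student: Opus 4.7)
The plan is to produce $\nu_*$ as a fixed point of the induced map $F_\bullet$ on the compact $\R$-tree $\cV_1$, and then to identify the associated multiplier $d(F,\nu_*)$ with the asymptotic degree $\la_1$.

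By Proposition~\ref{prop:V1invariant}, $F_\bullet$ sends $\cV_1$ continuously into itself. The ambient space $\cV_0$ is compact and $\cV_1$ is a closed subtree, hence itself a compact $\R$-tree; combined with the fact recalled just after Proposition~\ref{prop:V1invariant} that $F_\bullet$ sends small segments homeomorphically onto small segments, a standard fixed point theorem for continuous self-maps of compact $\R$-trees (proved, for instance, by extracting a minimal nonempty closed invariant subtree via Zorn's lemma and noting that minimality forces it to reduce to a point) produces $\nu_* \in \cV_1$ with $F_\bullet \nu_* = \nu_*$. Setting $\la \= d(F,\nu_*) > 0$ (again by Proposition~\ref{prop:V1invariant}), this reads $F_*\nu_* = \la \nu_*$, and membership $\nu_* \in \cV_1$ automatically gives $\a(\nu_*) \ge 0$.

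The upper bound $\la \le \la_1$ is direct. Iterating yields $F_*^n\nu_* = \la^n \nu_*$, so for a generic affine $L$ on $\C^2$,
\[
\la^n \;=\; -\la^n\nu_*(L) \;=\; -\nu_*(F^{n*}L) \;\le\; \deg(F^{n*}L) \;=\; \deg(F^n),
\]
where the inequality uses $\nu_* \ge -\deg$ in the tree order on $\cV_0$. Taking $n$th roots gives $\la \le \la_1$.

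The main obstacle, and the technical heart of the argument, is the reverse inequality $\la \ge \la_1$, which does not hold for an arbitrary fixed point. The strategy is to choose $\nu_*$ so that it realizes a spectral-radius-type extremum rather than being extracted abstractly. A natural route is to fix a tight compactification $X \supset \C^2$, consider the nonnegative action of $F^*$ on the finite-dimensional subspace of $\Pic(X)$ generated by the primes at infinity, and apply Perron--Frobenius to extract a nonnegative eigenvector; comparing its eigenvalue to the growth rate of $\deg(F^n)$ forces that eigenvalue to be $\la_1$. Via the correspondence between primes of $X$ and divisorial valuations in $\cV_1$, this produces a divisorial candidate; a limiting procedure over progressively finer tight compactifications, or equivalently along divisorial approximations in the compact tree $\cV_1$, then yields an eigenvaluation $\nu_* \in \cV_1$ satisfying $F_*\nu_* = \la_1\nu_*$, as required.
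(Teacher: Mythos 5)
The overall architecture is sound and part of it is correct: using Proposition~\ref{prop:V1invariant} to get a continuous self-map of $\cV_1$, extracting a fixed point $\nu_*$, and then deducing $d(F,\nu_*)\le\la_1$ from $F_*^n\nu_*=d(F,\nu_*)^n\nu_*$ together with $\nu_*\ge-\deg$ is exactly right. The paper itself does not reproduce a proof here (it cites~\cite[\S7.3]{eigenval}); there, existence of a fixed point and identification of its eigenvalue with $\la_1$ are established by a direct analysis of the tree dynamics of $F_\bullet$ on $\cV_1$, using the fact that $F_\bullet$ preserves the tree structure (segments to segments) and the regularity of $d(F,\cdot)$ on the tree, rather than anything cohomological. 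The $\Ltwo(\fX)$/Riemann--Zariski machinery of the Appendix is reserved for the refined statement Theorem~\ref{thm:basin}, which additionally requires $\la_2<\la_1^2$; Proposition~\ref{prop:eigenval} must hold without that hypothesis, so any proof leaning on that machinery would be out of scope.

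There are two genuine gaps. First, the fixed point extraction is not complete as stated: a minimal nonempty closed $F_\bullet$-invariant subtree need not reduce to a point without further argument (e.g.\ a segment whose endpoints are swapped is invariant and has no proper nonempty closed invariant subtree that is obviously smaller; one still needs an intermediate-value or retraction argument to locate a fixed point inside it). This is fixable, but it is not the ``standard'' one-line Zorn argument you invoke.

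Second, and more seriously, the proposed route to $\la\ge\la_1$ does not work as written. For a fixed tight compactification $X$, the Perron--Frobenius eigenvalue of $\tF^*$ on $\Pic(X)$ is in general \emph{strictly larger} than $\la_1$: the identity $\tF^{n*}=(\tF^*)^n$ fails precisely when $\tF$ is not algebraically stable on $X$, and in that case $\deg(F^n)=(\tF^{n*}\cL\cdot\cL)$ grows strictly slower than $\rho(\tF^*)^n$. So ``comparing its eigenvalue to the growth rate of $\deg(F^n)$'' does not force the eigenvalue to equal $\la_1$; it only gives a one-sided bound in the wrong direction. Making the eigenvalues converge to $\la_1$ along a tower of compactifications is precisely the content of algebraic stability in the form of Theorem~A, which is downstream of this proposition, so the ``limiting procedure'' you gesture at is at risk of circularity and in any case carries the entire weight of the argument without any supporting detail. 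The technical heart of the statement---that the fixed point can be chosen so that its multiplier is exactly $\la_1$---is thus asserted rather than proved.
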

Such a valuation is called an \emph{eigenvaluation}.
The proof is based on the fact that $F_\bullet$
preserves the tree structure on $\cV_1$.
We also proved that the eigenvaluation 
admits a small basin of attraction. 
Using the techniques of~\cite{deggrowth} as described in
the appendix, we can strengthen these conclusions considerably
under the assumption $\la_2<\la_1^2$, where 
$\la_2$ is the topological degree of $F$.
\begin{thm}\label{thm:basin}
  Assume $\la_2<\la_1^2$.
  \begin{itemize}
  \item[(a)]
    the valuation $\nu_*$ in 
    Proposition~\ref{prop:eigenval}
    is the unique valuation $\nu\in\cV_0$ with
    $\a(\nu)\ge0$ and $F_*\nu=\la_1\nu$;
  \item[(b)]
    if $\nu\in\cV_0$ and $\a(\nu)>0$, then
    $F^n_\bullet\nu\to\nu_*$ in $\cV_0$ as $n\to\infty$;
  \item[(c)]
    there exists at most one $\nu\in\cV_0$ with $\a(\nu)=0$
    such that $F^n_\bullet\nu\not\to\nu_*$ as $n\to\infty$;
    this $\nu$ must satisfy 
    $F_\bullet\nu=\nu$.
  \end{itemize}
\end{thm}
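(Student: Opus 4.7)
The plan is to exploit the Hilbert space machinery from~\cite{deggrowth}, as adapted to our valuative setting in the appendix, and translate the three assertions into a spectral analysis of the pullback operator $F^*$ acting on the inverse limit of the N\'eron--Severi groups of all admissible compactifications of $\C^2$, completed for a natural $\ltwo$ norm. To each $\nu\in\cV_0$ one attaches a nef class $Z_\nu$ whose self-intersection equals $\a(\nu)$, satisfying the equivariance $F^*Z_\nu=d(F,\nu)\,Z_{F_\bullet\nu}$, so that $F_\bullet$-fixed points with $d(F,\cdot)=\la_1$ correspond to $\la_1$-eigenvectors of $F^*$. The crucial consequence of the hypothesis $\la_2<\la_1^2$ is a Perron--Frobenius type statement: $F^*$ has spectral radius $\la_1$, a one-dimensional $\la_1$-eigenspace spanned by a class $\theta_*$ proportional to $Z_{\nu_*}$ with $\theta_*\cdot\theta_*>0$, and the intersection form is negative definite on the orthogonal complement $\theta_*^\perp$ with a strict spectral gap for the action of $F^*$ there.

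Granting this, (a) follows by direct inspection: a normalized $\nu\in\cV_0$ with $F_*\nu=\la_1\nu$ is automatically $F_\bullet$-fixed with $d(F,\nu)=\la_1$, hence $Z_\nu$ lies in the $\la_1$-eigenspace of $F^*$; the assumption $\a(\nu)\ge 0$ prevents $Z_\nu$ from being orthogonal to $\theta_*$, forcing $Z_\nu$ to be proportional to $Z_{\nu_*}$, and the normalization gives $\nu=\nu_*$. For (b), when $\a(\nu)>0$ the class $Z_\nu$ sits in the $\ltwo$ completion and pairs strictly positively with $\theta_*$; the spectral decomposition then yields $\la_1^{-n}F^{n*}Z_\nu\to c\,\theta_*$ in the Hilbert topology with $c>0$. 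To transfer this convergence back to $\cV_0$, I would use the characterization of its topology via the retractions $r_I$ onto finite closed segments $I$, and the fact that these retractions are determined by intersection pairings $Z_\nu\cdot Z_E$ with the primes $E$ bordering $I$.

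The hardest step will be~(c), since on the degeneracy locus $\{\a=0\}$ the intersection form becomes semi-definite and the Hilbert contraction underlying (b) no longer controls convergence directly. My plan is to combine the strict spectral gap on $\theta_*^\perp$ with the tree-dynamical information on $F_\bullet$ from~\cite[\S7]{eigenval}, in particular the fact that $F_\bullet$ preserves the tree structure on $\cV_1$ and maps small arcs homeomorphically. If $\nu\in\cV_0$ has $\a(\nu)=0$ and its forward orbit does not converge to $\nu_*$, compactness of $\cV_0$ produces a cluster value $\mu\ne\nu_*$ with $\a(\mu)=0$; the spectral gap on $\theta_*^\perp$ rules out nontrivial $\la_1$-expansion of the component of $Z_\nu$ orthogonal to $\theta_*$, which, after unwinding the compatibility with retractions, forbids the orbit from drifting on $\{\a=0\}$ and forces $F_\bullet\nu=\nu$. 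Uniqueness of such an exceptional valuation is then read off from the spectral gap: two distinct fixed valuations of zero skewness would yield two independent $\la_1$-eigendirections for $F^*$, again contradicting the one-dimensionality of the $\la_1$-eigenspace. The principal obstacle throughout is precisely this passage from linear spectral statements in the $\ltwo$ completion to tree-theoretic convergence in $\cV_0$ along the degenerate boundary $\{\a=0\}$, where one must carefully quantify how the class $Z_\nu$ recovers $\nu$ and match the intersection-pairing convergence with the weak topology on the valuative tree.
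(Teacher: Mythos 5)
Your overall strategy---encoding valuations as Weil classes $Z_\nu$ on the Riemann--Zariski space and invoking the $\Ltwo$ spectral theory from the appendix---is indeed the paper's route, but your set-up contains a pushforward/pullback confusion that undermines the details at every step. The functorial identity that actually holds is $F_*Z_\nu = Z_{F_*\nu} = d(F,\nu)\,Z_{F_\bullet\nu}$ (Lemma~\ref{lem:pushpush}), not $F^*Z_\nu = d(F,\nu)Z_{F_\bullet\nu}$; the pullback $F^*Z_\nu$ is a sum over $F_\bullet$-preimages (Proposition~\ref{prop:pull}) and cannot track a single valuation. Accordingly $Z_{\nu_*}$ is the \emph{pushforward} eigenvector $\theta_*$, while the class with vanishing self-intersection is the \emph{pullback} eigenvector $\theta^*$---for non-invertible $F$ these are genuinely different objects. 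Moreover $(\theta_*\cdot\theta_*)=\a(\nu_*)$ can vanish (for instance when $\nu_*$ is a rational pencil valuation, which does occur with $\la_2<\la_1^2$), so the negative-definiteness of the intersection form on $\theta_*^\perp$ that you lean on in both (b) and (c) is false in general.

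The proof of (b) and (c) hinges on a single Hodge-index argument that your proposal does not isolate. Theorem~\ref{thm:spectral} gives $\la_1^{-n}F^n_*Z_\nu\to(Z_\nu\cdot\theta^*)\,\theta_*$ for $Z_\nu$ nef. Since $Z_\nu$ and $\theta^*$ are nef and $(\theta^*\cdot\theta^*)=0$, the Hodge inequality yields $(Z_\nu\cdot\theta^*)\ge0$ with equality iff $Z_\nu$ is proportional to $\theta^*$. When $\a(\nu)>0$ the proportionality is impossible since $(Z_\nu\cdot Z_\nu)=\a(\nu)>0=(\theta^*\cdot\theta^*)$, so the pairing is strictly positive and (b) follows. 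The equality case disposes of (c) in one stroke: $Z_\nu\propto\theta^*$ forces $\a(\nu)=0$, uniqueness of $\nu$ via Lemma~\ref{lem:embedding} and the uniqueness of $\theta^*$, and $F_\bullet\nu=\nu$ because $F_*\theta^*=(\la_2/\la_1)\theta^*$. Your alternative route to (c)---compactness of $\cV_0$, cluster values, tree retractions---is not needed, and your uniqueness argument misidentifies the eigenvalue: the exceptional $\nu$ satisfies $F_*\nu=(\la_2/\la_1)\nu$, not $\la_1\nu$, so two such valuations would not produce independent $\la_1$-eigendirections; what rules out a second one is that both of their classes would be forced to equal (a normalization of) the unique $\theta^*$.
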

Hence it makes sense to refer to $\nu_*$
as \emph{the} eigenvaluation when $\la_2<\la_1^2$.
\begin{proof}
  The proof invokes the spectral properties
  of the operators $F^*$ and $F_*$ on the space 
  $\Ltwo(\fX)$ as discussed in Appendix~A.

  For $\nu\in\hcV_0$ let $Z_\nu\in W(\fX)$ be the associated
  Weil class, see Section~\ref{sec:classval}. 
  Then $F_*Z_\nu=Z_{F_*\nu}$. 
  When $\nu\in\cV_0$ and $\a(\nu)\ge0$,
  Lemma~\ref{lem:nefcrit} shows 
  $Z_\nu$ is nef, hence in $\Ltwo(\fX)$,
  Theorem~\ref{thm:spectral} therefore implies that 
  $F_*\nu=\la_1\nu$ iff $Z_\nu=c\,\theta_*$ for some $c>0$. 
  In fact, $c=1$ since $(\theta_*\cdot\cL)=(Z_\nu\cdot\cL)=1$,
  and so $\nu=\nu_*$. This proves~(a), and that 
  $\theta_*=Z_{\nu_*}$.

  To prove~(b) and~(c), pick any $\nu\in\cV_0$ with 
  $\a(\nu)\ge0$. It follows from 
  Theorem~\ref{thm:spectral} that
  $\la_1^{-n}F^n_*Z_\nu\to(Z_\nu\cdot\theta^*)\theta_*
  =(Z_\nu\cdot\theta^*)Z_{\nu_*}$
  as $n\to\infty$. 
  Thus $F^n_\bullet\nu\to\nu_*$ as long as 
  $(Z_\nu\cdot\theta^*)>0$. 
  Since $Z_\nu$ is nef and 
  $(\theta^*\cdot\theta^*)=0$,
  the Hodge inequality implies 
  $(Z_\nu\cdot\theta^*)\ge0$ with equality iff
  $Z_\nu=c\theta^*$ for some $c>0$.
  In the latter case, $\nu$ is uniquely determined,
  $\a(\nu)=(Z_\nu\cdot Z_\nu)=0$ and
  $F_\bullet\nu=\nu$ as $F_*\theta^*=(\la_2/\la_1)\theta^*$.
\end{proof}
\begin{prop}\label{prop:quadint}
  The asymptotic degree $\la_1$ is a quadratic integer.
  Moreover, when $\la_2<\la_1^2$, we are in
  one of the following two cases:
  \begin{itemize}
  \item[(i)]
    $\la_1\in\N$ and $\nu_*$ is divisorial or infinitely singular;
  \item[(ii)]
    $\la_1\not\in\Q$ and $\nu_*$ is irrational.
  \end{itemize}
\end{prop}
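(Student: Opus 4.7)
The plan combines Theorem~\ref{thm:basin} with the Hilbert space framework of the appendix. Under the assumption $\lambda_2 < \lambda_1^2$, Theorem~\ref{thm:spectral} yields a two-dimensional $F_*$-invariant subspace $V \subset \Ltwo(\fX)$ spanned by $\theta_* = Z_{\nu_*}$ (with eigenvalue $\lambda_1$) and a second eigenvector $\theta^*$ with eigenvalue $\lambda_2/\lambda_1$. Since $V$ is obtained as a limit of $F_*$-invariant subspaces of the rational N\'eron--Severi spaces $\NS(X)_\Q$, on which $F_*$ acts by integer matrices, the characteristic polynomial of $F_*|_V$,
\[
 P(t)=t^2-\bigl(\lambda_1+\tfrac{\lambda_2}{\lambda_1}\bigr)t+\lambda_2,
\]
has integer coefficients. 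Hence $\lambda_1$ is a quadratic algebraic integer, proving the first assertion in the present case. (For arbitrary $F$, the statement is already contained in \cite{eigenval}.)

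For the divisorial case of the dichotomy, suppose $\nu_*$ is divisorial with center a prime $E$ on some admissible compactification $X$, and write $\nu_* = b_E^{-1}\ord_E$. Because $F_\bullet\nu_* = \nu_*$, the center of $F_*\nu_*$ on $X$ is again $E$; the description recalled in Section~\ref{sec:indmap} then gives $F_*\ord_E = m\,\ord_E$ with $m\in\N$ the coefficient of $E$ in $\tF^*E$. Comparing with $F_*\nu_* = \lambda_1\nu_*$ yields $\lambda_1 = m \in \N$. Conversely, if $\lambda_1 \in \Q$---hence $\lambda_1 \in \N$ by the first part---the one-dimensional $\lambda_1$-eigenline of $F_*|_V$ is defined over $\Q$, so $\theta_*$ has a representative in $V_\Q$. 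Its self-intersection $\alpha(\nu_*) = \theta_*\cdot\theta_*$ is then rational, which rules out $\nu_*$ being irrational, since irrational valuations have irrational skewness.

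For the remaining cases I would argue by contraposition. If $\lambda_1 \notin \Q$, then $\lambda_1$ and $\bar\lambda_1 = \lambda_2/\lambda_1$ are Galois conjugate quadratic irrationals; the eigenlines of $F_*|_V$ are correspondingly Galois conjugate, and the classes $\theta_* + \theta^*$ and $\lambda_1\theta_* + \bar\lambda_1\theta^*$ span $V_\Q$. Once one checks that $V_\Q \subset \NS(X)_\Q$ for some single admissible $X$, it follows that $\theta_* \in \NS(X)_\R$, forcing $\nu_*$ to be quasimonomial on $X$, i.e.\ divisorial or irrational; the divisorial case is excluded by the previous paragraph, so $\nu_*$ must be irrational. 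This disposes of both the infinitely singular and the curve valuation possibilities in case~(ii). The symmetric direction, when $\lambda_1 \in \N$, uses the same finite-model reduction to conclude that $\nu_*$ is divisorial or infinitely singular, while a curve valuation $\nu_C \in \cV_1$ fixed by $F_\bullet$ is ruled out by noting that it would force $\tF^{-1}(C) \supset C$, incompatible with $\lambda_2 < \lambda_1^2$. I expect the main technical obstacle to be the justification that $V_\Q$ is contained in $\NS(X)_\Q$ for some finite admissible $X$; this requires a careful finite-rank approximation argument within $\Ltwo(\fX)$, using that the two rational classes spanning $V_\Q$ are each individually realized in some $\NS(X)_\Q$ and that one can dominate the two compactifications by a common one.
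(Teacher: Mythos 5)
Your proposal takes a genuinely different route from the paper's. The paper proves Proposition~\ref{prop:quadint} by appealing to the local normal forms of~\cite[Theorem~7.7]{eigenval} near the center of $\nu_*$, and then arguing case by case on the type of $\nu_*$: divisorial gives $\la_1=d(F,\nu_*)\in\N$ (as you also note); infinitely singular gives $\la_1\in\N$ by inspection of the rigid germ normal form; and irrational is handled by observing that $\la_1$ is the spectral radius of a $2\times 2$ integer matrix $M$, so that $\la_1\in\Q$ would force the second eigenvalue $\la_1'$ to equal $\la_1$, making the \emph{local} topological degree $\det M=\la_1^2$ at the fixed point, contradicting $\la_2<\la_1^2$. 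You instead try to run everything through the two-dimensional $F_*$-invariant subspace $V$ spanned by $\theta_*$ and $\theta^*$ in $\Ltwo(\fX)$, which is an appealing idea but creates the gap you yourself flag.

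The gap is fatal as stated. You need $V_\Q\subset\NS(X)_\Q$ for a single admissible $X$, but this is false in exactly the cases where you invoke it. By Lemma~\ref{lem:fundsol}, $Z_\nu$ is a Cartier class determined in some finite $X$ precisely when $\nu$ is divisorial. When $\nu_*$ is infinitely singular or irrational, $\theta_*=Z_{\nu_*}$ is a genuine Weil class in $\Ltwo(\fX)\setminus C(\fX)$; it has a nontrivial incarnation on every admissible compactification and is not the pullback of any class on a finite model. The same holds for $\theta^*$ in general. So there is no finite-rank rational invariant subspace to reduce to, and the inference ``$\theta_*\in V_\Q\subset\NS(X)_\Q$, hence $\a(\nu_*)=(\theta_*\cdot\theta_*)\in\Q$'' cannot be carried out. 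A related weak point is your claim that the characteristic polynomial of $F_*|_V$ has integer coefficients because $V$ is ``a limit of $F_*$-invariant subspaces of $\NS(X)_\Q$'': the subspaces $\NS(X)_\R$ are not $F_*$-invariant for any single $X$ in general (one needs to enlarge $X$ to pull back), and $V$ is not obtained as a limit of finite-rank invariant rational subspaces. Finally, your separate argument to rule out curve valuations is unnecessary: curve valuations simply do not belong to $\cV_1$ (they have $A=+\infty$), so they cannot be eigenvaluations by Proposition~\ref{prop:eigenval}; no dynamical argument about $\tF^{-1}(C)\supset C$ is needed. The moral is that the spectral machinery of the appendix is very effective for convergence and uniqueness statements such as Theorem~\ref{thm:basin}, but the arithmetic content of Proposition~\ref{prop:quadint} (that $\la_1$ is a quadratic integer, and its rationality matches the type of $\nu_*$) lives in the local normal form data, which is where the paper goes to extract it.
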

\begin{proof}
  The fact that $\la_1$ is a quadratic integer 
  is contained in~\cite[Theorem~A']{eigenval}.
  An outline of the proof goes as follows. 

  When $\nu_*$ is divisorial, it follows from
  the discussion above Proposition~\ref{prop:proper-crit} 
  that $\la_1=d(F,\nu_*)\in\N$.

  When $\nu_*$ is not divisorial, 
  Theorem~7.7 in~\cite{eigenval} provides local 
  normal forms of $F$ at some point at infinity 
  of some suitable admissible compactification
  of $\C^2$. In the infinitely 
  singular case, one sees 
  by inspection that $\la_1$ is an integer. 

  If instead $\nu_*$ is irrational, then 
  the local normal form at $p$ is monomial, and 
  $\la_1$ is the spectral radius of a $2\times2$ 
  matrix $M$ having nonnegative integer coefficients.
  Suppose $\la_1\in\Q$.
  Then the other eigenvalue $\la'_1$
  of $M$ is also rational. 
  Now $\nu_*$ being irrational means $M$ has
  an eigenvector $(u,v)\in\R_+^2$ 
  with $u/v$ irrational.
  This is only possible
  if $\la'_1=\la_1$. But then the local
  topological degree of $F$ at $p$ equals 
  $\det M=\la_1^2$, contradicting $\la_2<\la_1^2$.
\end{proof}
\begin{prop}\label{prop:nondiv}
  When $\la_2<\la_1$, the eigenvaluation $\nu_*$ cannot
  be divisorial.
\end{prop}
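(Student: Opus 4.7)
My plan is to argue by contradiction: assuming $\nu_*$ is divisorial, I would derive $\la_2 \geq \la_1$. Write $\nu_* = b^{-1}\ord_E$ for a prime $E$ on some admissible compactification $X$. The eigenvalue identity $F_*\nu_* = \la_1\nu_*$ becomes $F_*\ord_E = \la_1\ord_E$, so the pushforward remains divisorial with center $E$. Geometrically this means the lift $\tF : X \dashrightarrow X$ sends the generic point of $E$ to itself, and by the pushforward formula recalled in Section~\ref{sec:indmap} (applied with $X' = X$ and $E' = E$) the coefficient of $E$ in $\tF^*E$ equals $\la_1$.

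From this I would extract two pieces of information. First, since $\tF$ is dominant at the generic point of $E$, the restriction $\tF|_E$ is a dominant rational self-map of the smooth irreducible curve $E$, of some positive degree $k \geq 1$. Second, $\tF$ has transverse multiplicity $\la_1$ along $E$. To combine these with the topological degree, I would resolve the indeterminacy of $\tF$ by a birational morphism $\pi : X'' \to X$, so that $\sigma := \tF \circ \pi : X'' \to X$ is a surjective morphism of the same topological degree $\la_2$, and apply the projection formula to get $\sigma_*\sigma^*E = \la_2 E$.

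The strict transform $\tilde E \subset X''$ of $E$ satisfies $\sigma(\tilde E) = E$; since $\pi|_{\tilde E}$ is birational, $\sigma|_{\tilde E}$ is identified with $\tF|_E$, hence has degree $k$. A local computation at the generic point of $\tilde E$, where $\pi$ is an isomorphism, shows that the multiplicity of $\tilde E$ in $\sigma^*E = \pi^*(\tF^*E)$ equals the coefficient of $E$ in $\tF^*E$, namely $\la_1$. Decomposing $\sigma^*E = \sum_F m_F F$ over primes of $X''$ and extracting the $E$-component of $\sigma_*\sigma^*E = \la_2 E$ then gives
\[
\la_2 \;=\; \sum_{F:\,\sigma(F)=E} m_F\,\deg(\sigma|_F) \;\geq\; \la_1\, k \;\geq\; \la_1,
\]
since every term is nonnegative, contradicting $\la_2 < \la_1$. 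The only step that requires genuine care is the identification $m_{\tilde E} = \la_1$ in the presence of the exceptional divisors of $\pi$ and of the other primes appearing in $\tF^*E$; this should reduce to a short local computation at the generic point of $\tilde E$, where $\pi$ is an isomorphism onto a neighborhood of the generic point of $E$.
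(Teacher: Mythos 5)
Your proof is correct and takes essentially the same approach as the paper: both identify $\la_1$ as the coefficient of $E$ in $\tF^*E$ and then bound this coefficient by the topological degree $\la_2$. You have simply made precise, via resolution of indeterminacy and the projection formula $\sigma_*\sigma^*E=\la_2 E$, what the paper dispatches in a single sentence (``this coefficient is dominated by the topological degree of $F$ in a neighborhood of $E$'').
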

\begin{proof}
  Assume $\nu_*$ is divisorial. 
  Pick an admissible compactification $X$ of $\C^2$ 
  such that $\nu_*$ is proportional to $\ord_E$ for
  some prime $E$ of $X$.
  Then the rational lift $\tF:X\dashrightarrow X$
  maps $E$ onto itself, and the eigenvalue $\la_1$
  is the coefficient of $E$ in $\tF^*E$.
  This coefficient is dominated by the topological 
  degree of $F$ in a neighborhood of $E$.
  Hence $\la_1\le\la_2$.
\end{proof}
In the proof of Theorem~A, the case when $\nu_*$ is
divisorial and also an end in $\cV_1$ needs special treatment.
It occurs exactly when $\a(\nu_*)=0>A(\nu_*)$, that is,
$\nu_*$ is a rational pencil valuation; or when
$\a(\nu_*)>0=A(\nu_*)$. 
As the next results show, the dynamics 
is then quite particular.
\begin{prop}\label{prop:skewproduct}
  Assume that $\nu\in\cV_1$ is a rational pencil 
  valuation such that $F_*\nu=\la\nu$ for some
  $\la>0$. Then $\la$ is an integer and
  $F$ is conjugate by a polynomial automorphism 
  of $\C^2$ to a skew product of the form
  $F(x,y)=(P(x),Q(x,y))$ with $\deg_yQ=\la$.
\end{prop}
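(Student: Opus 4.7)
The plan is to use the Abhyankar--Moh--Suzuki embedding theorem to normalize the pencil $|C|$ associated with $\nu=\nu_{|C|}$, and then to read off the skew-product structure of $F$ directly from the eigenvalue equation by testing it on the two coordinate functions.

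First, because $\nu\in\cV_1$ is a rational pencil valuation, the generic member $C_\la$ of the pencil $\{P=\la\}_{\la\in\C}$ has geometric genus zero (as recalled in Section~\ref{sec:V1}) together with one place at infinity (by Moh's theorem). For all but finitely many values of $\la$, the affine curve $C_\la$ is also smooth, so it coincides with the normalization of its projective closure minus the single point lying over infinity; this gives $C_\la\simeq\C$ for generic $\la$. By the Abhyankar--Moh--Suzuki theorem, the polynomial $P-\la_0$ is then a polynomial coordinate on $\C^2$ for any such $\la_0$. Replacing $F$ by its conjugate under the corresponding polynomial automorphism of $\C^2$, I may therefore assume that $P=x$, so that $|C|$ is the pencil of vertical lines $\{x=\la\}_{\la\in\C}$.

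In these coordinates, the normalized pencil valuation becomes $\nu(\phi)=-\deg_y\phi$ for every $\phi\in R$: indeed, $C_\la$ has degree one and for generic $\la$ the restriction $\phi(\la,y)$ is a polynomial in $y$ with a pole of order $\deg_y\phi$ at the unique point at infinity of $C_\la$. Writing $F=(F_1,F_2)$, applying the eigenvalue equation $F_*\nu=\la\nu$ to $\phi=x$ yields $\deg_yF_1=0$, so $F_1$ is a polynomial $P(x)$ in $x$ alone; applying it to $\phi=y$ yields $\deg_yF_2=\la$. Since $\deg_yF_2$ is a nonnegative integer and $\la>0$ by assumption, one simultaneously obtains $\la\in\N$ and the skew-product form $F(x,y)=(P(x),Q(x,y))$ with $\deg_yQ=\la$.

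The main obstacle is the first step, namely checking that Abhyankar--Moh--Suzuki is applicable. This requires knowing that some fibre is \emph{isomorphic} to $\C$ as an affine curve, not merely rational with one place at infinity; hence one must also invoke generic smoothness of the fibres $C_\la$, not only vanishing of the geometric genus. Once the coordinate change has been put in place, the rest of the proof reduces to the two one-line computations of $\deg_yF_1$ and $\deg_yF_2$.
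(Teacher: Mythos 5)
Your proof is correct and follows essentially the same route as the paper, which simply cites the Line Embedding Theorem (Abhyankar--Moh--Suzuki) as in \cite[\S7.4]{eigenval}; you supply the details the paper leaves implicit, namely the normalization of the pencil to $\{x=\mathrm{const}\}$, the identification $\nu(\phi)=-\deg_y\phi$, and the evaluation of the eigenvalue equation on $x$ and $y$.
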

\begin{proof}
  As in~\cite[\S 7.4]{eigenval} this follows 
  from the Line Embedding Theorem.
\end{proof}
\begin{prop}\label{prop:jacobian}
  Assume $\la_1>1$. If there exists a valuation 
  $\nu_*\in\cV_0$ 
  (not necessarily divisorial) with
  $F_*\nu_*=\la_1\nu_*$ and
  $\a(\nu_*)>0=A(\nu_*)$, then 
  $F$ is a counterexample to the Jacobian conjecture.
\end{prop}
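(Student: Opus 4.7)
The plan is to show two things: first, that the Jacobian $J_F$ is a nonzero constant; second, that $F$ is not a polynomial automorphism of $\C^2$. Together these conclusions make $F$ a counterexample to the Jacobian conjecture.

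\emph{Step 1 ($J_F$ is a nonzero constant).} The technical core is the transformation inequality
\begin{equation*}
d(F,\nu)\cdot A(F_\bullet \nu) \le A(\nu) + \nu(J_F)
\end{equation*}
valid for every divisorial $\nu \in \cV_0$ with $d(F,\nu)>0$. This follows by comparing the order at the exceptional prime $E$ of $\tF^*(dx\wedge dy)$ computed two ways: directly as $J_F\cdot(dx\wedge dy)$, and via the local Jacobian $\jac(\tF)$ after resolving the indeterminacy of $\tF$, using effectiveness of the ramification divisor in Riemann--Hurwitz. Equality occurs when $\tF$ maps $E$ dominantly and unramified onto its image, while the contracting case gives strict inequality. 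The inequality extends to general $\nu\in\cV_0$ by density of divisorial valuations in $\cV_1$ together with the semicontinuity of $A$ and the continuity of $\nu\mapsto\nu(J_F)$. Applied to $\nu_*$, using $F_\bullet \nu_*=\nu_*$, $d(F,\nu_*)=\la_1$ and $A(\nu_*)=0$, one obtains $\nu_*(J_F)\ge 0$.

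I then invoke the skewness bound: for every $\nu\in\cV_0$ with $\a(\nu)\ge0$ and every polynomial $P$,
\begin{equation*}
\nu(P) \le -\a(\nu)\deg(P).
\end{equation*}
This comes from identifying $-\nu(P)$ with an intersection $(Z_\nu\cdot c_P)$ in the Picard group of an admissible compactification, where $Z_\nu$ is the Weil class of $\nu$ (nef precisely because $\a(\nu)\ge 0$, by the appendix) and $c_P$ has $(c_P\cdot\cL)=\deg(P)$, followed by Hodge index. Applied to $P=J_F$ and $\nu=\nu_*$ this gives
\begin{equation*}
0 \le \nu_*(J_F) \le -\a(\nu_*)\deg(J_F).
\end{equation*}
Since $\a(\nu_*)>0$ we must have $\deg(J_F)\le 0$, so $J_F$ is constant; as $F$ is dominant ($\la_1\ge 1$ forces $J_F\not\equiv 0$), it is a nonzero constant.

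\emph{Step 2 ($F$ is not an automorphism).} Suppose by contradiction that $F$ is a polynomial automorphism. Since $\la_1>1$, the Jung--Van der Kulk theorem (used in the Friedland--Milnor classification) gives that $F$ is conjugate to a composition of generalized H\'enon maps $H(x,y)=(y,P(y)-ax)$ with $a\neq 0$ and $d=\deg P\ge2$. A direct computation in the chart $(x/y,1/y)$ at $[0{:}1{:}0]$ identifies the eigenvaluation of such $H$ as the monomial valuation with weights $(d-1,d)$, for which $A(\nu_*)=-(d+1)/d<0$; the same strict negativity holds for arbitrary H\'enon compositions. This contradicts $A(\nu_*)=0$, so $F$ cannot be a polynomial automorphism.

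\emph{Main obstacle.} The principal difficulty is Step~1: establishing the thinness transformation inequality rigorously in both the dominant and the contracting cases, in particular controlling the effective ramification contribution on a resolution of indeterminacy of $\tF$. Once this inequality is in place, the skewness bound (essentially a Hodge inequality) and the Friedland--Milnor classification close the argument cleanly.
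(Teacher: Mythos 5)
Your Step~1 follows the paper closely. The paper uses the Jacobian formula $A(\nu)+\nu(JF)=d(F,\nu)A(F_\bullet\nu)$ (stated as an \emph{equality} in~\eqref{e:jacobian}, cited from \cite[Lemma~7.6]{eigenval}) applied to $\nu=\nu_*$ to get $\nu_*(JF)=0$ directly, and then the same observation you make: $\a(\nu_*)>0$ forces $\deg JF\le0$, so $JF$ is a nonzero constant. Your presentation of the formula as an inequality, and the speculative Riemann--Hurwitz explanation, are unnecessary extra scaffolding, but they do not change the logic.

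Your Step~2, however, contains a genuine error. You claim that the eigenvaluation of a generalized H\'enon map $H$ of degree $d$ is ``the monomial valuation with weights $(d-1,d)$.'' This cannot be right: such a valuation is divisorial (or irrational when the ratio is irrational), whereas the paper's Example in \S\ref{sec:exameig} shows that the eigenvaluation of any polynomial automorphism with $\la_1>1$ is \emph{infinitely singular} --- it has $\a(\nu_*)=(\theta_*\cdot\theta_*)=0$, which is irrational for irrational valuations and $>0$ for divisorial ones, and it is excluded from being divisorial by Proposition~\ref{prop:nondiv}. Your claimed thinness $A(\nu_*)=-(d+1)/d$ is therefore computed for the wrong valuation, and the assertion that ``the same strict negativity holds for arbitrary H\'enon compositions'' is stated without any argument. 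Moreover, the route via thinness is unnecessarily delicate: the relation $A=-2-\int m\,d\a$ with $m\ge1$ only guarantees $A(\nu)\ge-1$ when $\a(\nu)=0$, and it is not a priori clear that $A<0$ strictly for the infinitely singular eigenvaluation of an arbitrary automorphism. The paper's argument is both simpler and correct: by Theorem~\ref{thm:basin}(a) (applicable since an automorphism with $\la_1>1$ has $\la_2=1<\la_1^2$), the hypothesis forces $Z_{\nu_*}=\theta_*(F)$; for an automorphism $\theta_*(F)=\theta^*(F^{-1})$, and $(\theta^*\cdot\theta^*)=0$ always, giving $\a(\nu_*)=(Z_{\nu_*}\cdot Z_{\nu_*})=0$, contradicting $\a(\nu_*)>0$ directly --- no Jung--van der Kulk computation, no thinness estimate. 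You should replace your Step~2 with this $\Ltwo(\fX)$ argument, or at least note that the contradiction comes from the skewness being forced to vanish, not from an explicit identification of the eigenvaluation.
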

\begin{proof}
  The change of variables formula implies 
  $A(F_*\nu)=A(\nu)+\nu(JF)$ for all 
  $\nu\in\cV_1$, where $JF$ is the Jacobian determinant
  of $F$, see~\cite[Lemma~7.6]{eigenval}.
  Applying this to $\nu=\nu_*$ yields $\nu_*(JF)=0$. 
  As $\a(\nu_*)>0$, this is only possible if 
  $JF$ is constant. 
  But if $F$ were a polynomial automorphism, we would have
  $\a(\nu_*)=(\theta_*(F)\cdot\theta_*(F))
  =(\theta^*(F^{-1})\cdot\theta^*(F^{-1}))=0$.
\end{proof}
%
%
\subsection{Examples of eigenvaluations}\label{sec:exameig}
Curve valuations do not belong to $\cV_1$, hence can never
be eigenvaluations. As the following examples show,
any other type of valuation can occur.
\begin{exam}
  Assume that the extension to $\P^2$ of
  $F:\C^2\to\C^2$ does not contract the line at infinity. 
  Then the divisorial valuation $-\deg$ is an eigenvaluation.
\end{exam}
\begin{exam}
  Any rational pencil valuation appears as an eigenvaluation.
  Indeed, in suitable affine coordinates, the valuation is
  associated to the pencil $x=\mathrm{const}$ and is the 
  eigenvaluation of a suitable skew product
  $F(x,y)=(P(x),Q(x,y))$.
\end{exam}
\begin{exam}
  Pick positive integers $a,b,c,d$ such that 
  $\Delta\=(a+d)^2-4(ad-bc)>0$. 
  Then the $2\times2$ matrix $M$ with entries $a,b,c,d$ 
  has two real eigenvalues $t>1$ and $t'<t$. 
  The eigenvalue $t$ admits an eigenvector
  $(u,v)\in\R^2_-$ that we can normalize 
  by the condition $\min\{u,v\}=-1$.
  The map $F(x,y) \= (x^ay^b,x^cy^d)$ has 
  topological degree $\la_2=|ad-bc|$ and
  asymptotic degree $\la_1=t$.
  It admits a unique eigenvaluation which is 
  the monomial valuation with weights
  $u$ on $x$ and $v$ on $y$. When $\Delta$ is not a square, 
  the eigenvaluation is irrational.
  Otherwise it is divisorial.

  One may also perturb $F$ by adding terms of 
  sufficiently low order. After doing so, 
  $\la_1,\la_2$ and the eigenvaluation remain unchanged.
\end{exam}
\begin{exam}
  The eigenvaluation $\nu_*$ of 
  a polynomial automorphism $F$
  with $\la_1>1$ is always 
  infinitely singular. Indeed, 
  the eigenclass $\theta_*\in\Ltwo(\fX)$
  can be written $\theta_*(F)=\theta^*(F^{-1})$,
  hence 
  $\a(\nu_*)=(\theta_*(F)\cdot\theta_*(F))=0$.
  Thus $\nu_*$ cannot be irrational,
  as that would imply $\a(\nu_*)\not\in\Q$.
  It cannot be divisorial by Proposition~\ref{prop:nondiv}.
  Hence it is infinitely singular. 
  Proposition~\ref{prop:quadint} now implies
  that the asymptotic degree $\la_1$ is 
  an integer, a fact which also follows from the 
  Friedland-Milnor classification~\cite{FriedlandMilnor}.

  It would be interesting to investigate 
  the relation between the eigenvaluation $\nu_*$,
  the solenoids constructed by 
  Hubbard et al.~\cite{HP1,HPV}, and
  the singularity of the Green function of $F$
  at infinity; see also~\cite[Proposition~6.9]{pshsing}.
\end{exam}
\begin{exam}
  The argument above shows more generally that 
  the eigenvaluation of 
  any polynomial mapping $F:\C^2\to\C^2$ 
  with $\la_2<\la_1\in\N$ must be infinitely singular.
  The condition of $\la_1$ being an integer is
  satisfied, for instance, if $F$ defines an
  algebraically stable map on $\P^2$, in
  which case $\la_1=\deg F$~\cite{FS2}.
\end{exam}
%
%
%
%
\section{Stability when $\la_2<\la_1^2$: the nondivisorial case}\label{stab:nondiv}
Our aim is now to prove precise versions 
of Theorem~A and~B in the case when the eigenvaluation is 
nondivisorial. Recall that we are always in this situation
when $\la_2<\la_1$, see Proposition~\ref{prop:nondiv}.
\begin{thm}\label{thm:stability1}
  Let $F:\C^2\to\C^2$ be a dominant polynomial mapping 
  with $\la_2<\la_1^2$ such that the eigenvaluation 
  $\nu_*$ is nondivisorial.
  Then there exists a tight compactification 
  $X$ of $\C^2$, a point $p\in X\setminus\C^2$
  and local coordinates $(z,w)$ at $p$ such that 
  the lift $\tF:X\dashrightarrow X$ 
  defines a superattracting fixed point 
  germ at $p$ taking one of the following forms:
  \begin{itemize}
  \item[(a)]
    $\tF(z,w)=(z^aw^b,z^cw^d)$, where $a,b,c,d\in\N$
    and the $2\times2$ matrix $M$ with entries $a,b,c,d$
    has spectral radius $\la_1$;
    locally at $p$ we have $X\setminus\C^2=\{zw=0\}$;
  \item[(b)]
    $\tF(z,w)=(z^{\la_1},\mu z^cw+P(z))$,
    where $c\ge1$, $\mu\in\C^*$ and  
    $P\not\equiv P(0)=0$ is a polynomial;
    locally at $p$ we have $X\setminus\C^2=\{z=0\}$.
  \end{itemize}
  Moreover, if $F$ is not conjugate to a skew product 
  by a polynomial automorphism of $\C^2$, 
  there exists $n\ge 1$ such that each prime of $X$ 
  is contracted to $p$ by $\tF^n$.
  When $F$ is conjugate to a skew product, the same conclusion
  holds for all primes of $X$ with the exception of a
  single prime invariant by $\tF$.
\end{thm}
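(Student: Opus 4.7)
The plan is to use the eigenvaluation $\nu_*$ as a blueprint for the desired compactification, then to combine a local normal form for $\tF$ at the center of $\nu_*$ with the global attracting dynamics on $\cV_1$ furnished by Theorem~\ref{thm:basin}. Since $\nu_*$ is nondivisorial and $\la_2<\la_1^2$, Proposition~\ref{prop:quadint} leaves only two possibilities: $\nu_*$ is irrational (so $\la_1$ is a quadratic irrational) or infinitely singular (so $\la_1\in\N$). I begin by building a sequence of admissible compactifications $X_m$ from $X_0=\P^2$ by iteratively blowing up the center of $\nu_*$; Corollary~\ref{cor:tight} ensures each $X_m$ is tight. Writing $p_m$ for the center of $\nu_*$ on $X_m$, the irrational case puts $p_m$ at the intersection of two primes for all $m$, while the infinitely singular case puts $p_m$ on a single prime through which a formal branch supporting $\nu_*$ passes.

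The analytical core is the local normal form~\cite[Theorem~7.7]{eigenval} for $\tF$ at the center of a nondivisorial eigenvaluation. For $m$ large enough it supplies local coordinates $(z,w)$ at $p:=p_m$ in which $\tF$ takes form~(a), with the exponent matrix $M$ having eigenvector encoding $\nu_*$ and spectral radius $d(F,\nu_*)=\la_1$, in the irrational case; and form~(b), with exponent $\la_1\in\N$, in the infinitely singular case. Both forms vanish to high order at $p$ and have $\tF(p)=p$, yielding a superattracting fixed-point germ; here one uses $\la_1>1$, which is forced by $\la_2\ge1$ and $\la_2<\la_1^2$. Setting $X:=X_m$, the complement $X\setminus\C^2$ is locally at $p$ equal to $\{zw=0\}$ in case~(a) and $\{z=0\}$ in case~(b), by construction.

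For the contraction statement, I examine every prime $E$ of $X$ via its associated $\nu_E\in\cV_1$. Since $\nu_E$ is divisorial but $\nu_*$ is not, $\nu_E\ne\nu_*$, and Theorem~\ref{thm:basin}(b)--(c) yields $F_\bullet^n\nu_E\to\nu_*$ in $\cV_0$ unless $\nu_E$ coincides with the (at most one) exceptional fixed valuation $\nu$ with $\a(\nu)=0$ and $F_\bullet\nu=\nu$. The characterization of the topology on $\cV_0$ via the open sets $U(q)$ in~\S\ref{geom-infty} converts $F_\bullet^n\nu_E\to\nu_*$ into the statement that the center of $F_\bullet^n\nu_E$ on $X$ equals $p$ for all large $n$; equivalently, $\tF^n$ contracts $E$ to $p$. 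Taking the maximum over the finitely many primes produces a single $n$ that works for all non-exceptional primes. If an exceptional prime $E_0$ does occur, then $\nu_{E_0}\in\cV_1$ is divisorial with $\a(\nu_{E_0})=0$ and $F_\bullet\nu_{E_0}=\nu_{E_0}$, hence is a rational pencil valuation by the structure of $\cV_1$ recalled in~\S\ref{sec:V1}; Proposition~\ref{prop:skewproduct} then forces $F$ to be conjugate to a skew product, and taking the contrapositive covers the non-skew-product case.

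The main obstacle is the appeal to the local normal form~\cite[Theorem~7.7]{eigenval}: translating the purely valuative statements that $\nu_*$ is irrational or infinitely singular and that $F_*\nu_*=\la_1\nu_*$ into an analytic expression for $\tF$ in coordinates after finitely many blowups. Once that bridge is crossed, the remaining work is a combination of the tightness criterion from Lemma~\ref{lem:staytight} with the attracting dynamics of Theorem~\ref{thm:basin}; the switch between topological convergence in $\cV_0$ and geometric contraction in $X$ is handled cleanly by the $U(p)$ characterization of the topology.
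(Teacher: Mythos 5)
Your strategy matches the paper's: build tight $X_m$ by iterated blowup along $\nu_*$, extract a local normal form at the center $p_m$ via the analysis behind~\cite[Theorem~7.7]{eigenval}, and then use Theorem~\ref{thm:basin} together with the $U(p)$-description of the topology on $\cV_0$ to turn weak convergence $F_\bullet^n\nu_E\to\nu_*$ into the geometric statement that $\tF^n$ contracts $E$ to $p$. The contraction argument, the finitely-many-primes reduction to a single $n$, and the skew-product dichotomy via Proposition~\ref{prop:skewproduct} are all handled correctly.

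The genuine gap is the superattracting claim in case (a). You write that the monomial form ``vanishes to high order at $p$ \dots yielding a superattracting fixed-point germ; here one uses $\la_1>1$.'' This is not a matter of inspection. Take $M=\begin{pmatrix}1&0\\c&d\end{pmatrix}$ with $c,d\ge1$, $d\ge 2$: the spectral radius $d$ exceeds $1$, but the first component of $\tF$ is literally $z$, so $D\tF(p)$ has eigenvalue $1$ — no high-order vanishing, and not superattracting. What actually rules this out is the irrationality of $\nu_*$. If $D\tF(p)$ has a nonzero eigenvalue, then (using $\la_1>1$) one of the rows of $M$ is $(1,0)$ or $(0,1)$, so $M$ is triangular; its eigenvalues are integers, the positive eigenvector lies on a coordinate axis, and the corresponding coordinate prime at $p$ is a divisorial eigenvaluation. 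This contradicts either Proposition~\ref{prop:quadint} (irrational $\nu_*$ forces $\la_1\notin\Q$) or, as the paper argues, the uniqueness of the eigenvaluation in Theorem~\ref{thm:basin}(a) applied to $F^2$. You need to invoke one of these; $\la_1>1$ alone does not suffice. Your reasoning for form (b) is fine: there $\partial_z(z^{\la_1})$ vanishes since $\la_1\ge2$, and $\partial_w(\mu z^c w+P(z))$ vanishes at $p$ since $c\ge1$, so $D\tF(p)$ is nilpotent by inspection.

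A lesser point: you cite \cite[Theorem~7.7]{eigenval} as directly supplying the normal form for a well-chosen $m$. The paper is more careful, saying the form follows from the \emph{proof} of that theorem and then sketching the arithmetic argument (finding a subsequence $m_j$ for which $F_\bullet$ maps $U(p_{m_j})$ into itself, hence $\tF_{m_j}$ is a holomorphic rigid germ, then invoking~\cite{F-rigid}). This is a presentational difference rather than a gap, but you should at least signal that the existence of a suitable $m$ requires an argument and is not merely ``for $m$ large enough.''
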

\begin{rmk}
  The skew product case in Theorem~\ref{thm:stability1}
  does not occur when $\la_2<\la_1$. 
  Indeed, if $F(x,y)=(P(x),Q(x,y))$, then 
  $\la_2=\deg P\cdot\deg_yQ\ge\max\{\deg P,\deg_yQ\}=\la_1$.
\end{rmk}
\begin{cor}\label{cor:nondivrec}
  Under the assumptions of Theorem~\ref{thm:stability1},
  for any $\nu\in\cV_1$, there exists $n=n(\nu)$
  such that the sequence $(d(F^j,\nu))_{j\ge n}$ satisfies an
  integral linear recursion formula of order 1 or 2.
  In particular, $(\deg F^j)_{j\ge n}$ satisfies
  such a recursion formula for $n$ large enough.
\end{cor}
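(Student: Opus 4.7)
The approach is to reduce the growth of $d(F^j,\nu)$ to the local dynamics of $\tF$ near $p$, exploiting the cocycle identity $d(F^{j+1},\nu)=d(F,F^j_\bullet\nu)\cdot d(F^j,\nu)$. By Theorem~\ref{thm:basin}, for $\nu\in\cV_1$ either $F^j_\bullet\nu\to\nu_*$ in $\cV_0$, or $\nu$ is the (at most one) exceptional valuation with $\a(\nu)=0$ and $F_\bullet\nu=\nu$. In the exceptional case, the cocycle factor is constant and $d(F^j,\nu)=d(F,\nu)^j$ satisfies the order-1 recursion $a_{j+1}=d(F,\nu)\,a_j$; integrality of $d(F,\nu)$ follows because the exceptional $\nu$ is divisorial (a rational pencil valuation on the invariant prime, by Proposition~\ref{prop:skewproduct}), so $d(F,\nu)$ equals the multiplicity of that prime in its own $\tF^*$-preimage.

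Assume henceforth $F^j_\bullet\nu\to\nu_*$. The eigenvaluation $\nu_*$ has center $p$ on $X$: in case~(a) it is the irrational monomial valuation associated to the positive eigenvector of $M=\bigl(\begin{smallmatrix}a&b\\c&d\end{smallmatrix}\bigr)$, while in case~(b) it is the infinitely singular valuation encoded by the local normal form. Since $U(p)\subset\cV_0$ is open, there exists $n=n(\nu)$ such that $\nu_j:=F^j_*\nu$ has center $p$ for all $j\ge n$; in particular $\nu_j(h)=0$ for every $h\in\cO_p^\times$.

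For generic affine $L$, the polar divisor of $L$ on $X$ is supported on $X\setminus\C^2$ with integer multiplicity $b_E=-\ord_E(L)$ along each prime $E$. In case~(a), $X\setminus\C^2=\{zw=0\}$ locally at $p$, so $L=z^{-\alpha}w^{-\beta}U$ with $\alpha=b_{\{z=0\}}$, $\beta=b_{\{w=0\}}$ and $U\in\cO_p^\times$; hence for $j\ge n$,
\[
d(F^j,\nu)=-\nu_j(L)=\alpha\,\nu_j(z)+\beta\,\nu_j(w).
\]
The relations $\tF^*z=z^aw^b$, $\tF^*w=z^cw^d$ yield $\bigl(\begin{smallmatrix}\nu_{j+1}(z)\\\nu_{j+1}(w)\end{smallmatrix}\bigr)=M\bigl(\begin{smallmatrix}\nu_j(z)\\\nu_j(w)\end{smallmatrix}\bigr)$, and Cayley-Hamilton gives the order-2 integral recursion
\[
d(F^{j+2},\nu)=(a+d)\,d(F^{j+1},\nu)-(ad-bc)\,d(F^j,\nu).
\]
In case~(b), $X\setminus\C^2=\{z=0\}$ locally, so $L=z^{-\alpha}U$ with $U\in\cO_p^\times$; hence $d(F^j,\nu)=\alpha\,\nu_j(z)$, and $\tF^*z=z^{\la_1}$ with $\la_1\in\N$ yields the order-1 integral recursion $d(F^{j+1},\nu)=\la_1\,d(F^j,\nu)$.

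Finally, for $\deg(F^j)=d(F^j,-\deg)$ apply the above to $\nu=-\deg\in\cV_1$: since $\a(-\deg)=1>0$, Theorem~\ref{thm:basin}(b) rules out the exceptional case and the local recursion applies. The main delicate point is the stabilization of the center at $p$ (second paragraph), which rests on the openness of $U(p)$ together with the explicit identification of the center of $\nu_*$; the factorization of $L$ as a monomial in local divisors at infinity times a unit is then routine, using that $L$ is meromorphic on $X$ with polar support in $X\setminus\C^2$.
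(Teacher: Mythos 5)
Your proof is correct and follows essentially the same route as the paper: use the cocycle $d(F^j,\nu)=\prod_{i<j}d(F,F^i_\bullet\nu)$, invoke Theorem~\ref{thm:basin} to split off the exceptional fixed valuation (order-1 recursion) from the generic case $F^j_\bullet\nu\to\nu_*$, then read off the recursion from the local normal form at $p$ via the linear action of $M$ on $(\nu_j(z),\nu_j(w))$ (Cayley--Hamilton) in the irrational case, or the single relation $\nu_{j+1}(z)=\la_1\nu_j(z)$ in the infinitely singular case. You add two small things the paper leaves implicit: you justify integrality of the ratio $d(F,\nu)$ in the exceptional case by identifying $\nu$ as a rational pencil valuation, and you correctly pair the irrational eigenvaluation with normal form~(a) and the infinitely singular one with normal form~(b) (the paper's proof contains a typo referring to form~(b) in the irrational case). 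Neither point changes the substance.
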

\begin{proof}[Proof of Theorem~\ref{thm:stability1}]
  Since $\nu_*$ is nondivisorial, its center on 
  any admissible compactification of $\C^2$ is a point. 
  We may therefore define an infinite sequence 
  $(X_m)_{m\ge0}$ of admissible compactifications 
  of $\C^2$ by setting $X_0=\P^2$ and letting
  $X_{m+1}$ be the blowup of $X_m$ at the center
  $p_m\in X_m$ of $\nu_*$. 
  Let $\nu_0,\dots,\nu_m$ be the normalized divisorial
  valuations associated to the primes of $X_m$.  
  As the eigenvaluation $\nu_*$ lies in $\cV_1$,
  the compactification $X_m$ is tight by 
  Corollary~\ref{cor:tight}.
  Hence, for any $j$, either $\a(\nu_j)>0$ or
  $\nu_j$ is a rational pencil valuation.
  
  We claim that there exists $m\ge0$ such that 
  the lift $\tF_m:X_m\dashrightarrow X_m$ defines
  a superattracting fixed point germ at $p_m$,
  taking one of the forms~(a) or~(b) above.
  This follows from the proof of 
  Theorem~7.7 in~\cite{eigenval}; let us briefly 
  indicate how to proceed.
  
  When $\nu_*$ is infinitely singular, 
  there exists an infinite subsequence $(m_j)_{j\ge1}$ 
  such that $X_{m_j}\setminus\C^2$ is locally irreducible
  at $p_{m_j}$. The corresponding
  valuations $\nu_{m_j+1}$ increase to $\nu_*$ as
  $j\to\infty$. 
  Consider the segment $I_j\=\,]\nu_{m_j+1},\nu_*]$ in $\cV_1$
  and the corresponding open neighborhood 
  $U_j$ of $\nu_*$ in $\cV_0$ 
  consisting of valuations whose tree retraction to
  the closed segment $\overline{I_j}$ belongs to $I_j$.
  The attracting properties of $\nu_*$
  imply $F_\bullet I_j\subset I_j$ and
  $F_\bullet U_j\subset\subset U_j$ for large $j$.
  Now $U_j$ is the set of valuations whose center on 
  $X_{m_j}$ equals $p_{m_j}$, so this means that
  $\tF_{m_j}$ defines a holomorphic fixed point germ at $p_{m_j}$
  which in fact is rigid in the sense of~\cite{F-rigid}.
  The normal form in~(b) follows from
  the classification in~\cite{F-rigid}. 
  A direct inspection shows that $\tF_{m_j}$ is superattracting 
  at $p_{m_j}$.
  
  If instead $\nu_*$ is irrational, then for all large $m$, 
  $p_m$ is the intersection of two primes of $X_m$.
  Now $F_\bullet$ preserves the tree structure on $\cV_0$:
  if $I_0$ is any small open segment in $\cV_0$ containing $\nu_*$ 
  and $I$ a small subsegment, then
  $F_\bullet$ is a homeomorphism of $I$ onto its image
  $F_\bullet I\subset I_0$.
  Moreover, $F|_{I_0}$ is contracting at $\nu_*$ 
  in the skewness metric. This means that 
  if $I$ is sufficiently symmetric around $\nu_*$,
  then $F_\bullet I\subset I$.
  (The symmetry condition is only necessary when $F_\bullet$
  is order-reversing on $I_0$.)
  Let $\mu_m,\mu'_m\in\cV_0$ be the normalized divisorial valuations
  associated to the primes of $X_m$ containing $p_m$ and
  set $I_m=\,]\mu_m,\mu'_m[$. 
  By repeating the arithmetic argument as
  in~\cite[Lemma~5.6]{eigenval}, one shows that there exists
  an infinite subsequence $(m_j)_{j\ge1}$ such that $I_{m_j}$
  is sufficiently symmetric so that 
  $F_\bullet I_{m_j}\subset I_{m_j}$. 
  For $j$ large enough we will also have 
  $F_\bullet U_{m_j}\subset U_{m_j}$,
  where $U_m$ is the set of valuations whose center on $X_m$
  equals $p_m$. Thus $\tF_{m_j}$ defines
  a holomorphic fixed point germ at $p_{m_j}$, which by
  invoking~\cite{F-rigid} can be put in the monomial 
  form~(a) above.
  Finally $\tF_{m_j}$ must be superattracting at 
  $p_{m_j}$ or else one of the primes of $X_{m_j}$ 
  containing $p_{m_j}$ would be
  an eigenvaluation for $F^2$ with eigenvalue $\la_1^2$, 
  contradicting Theorem~\ref{thm:basin}~(a).
  
  In both the irrational and infinitely singular case we
  have found a tight compactification $X$ of $\C^2$
  and a point $p\in X\setminus\C^2$ such that the lift 
  $\tF:X\dashrightarrow X$ defines a superattracting fixed
  point germ at $p$. 
  The point $p$ defines an open subset $U(p)$ of $\cV_0$:
  a valuation $\nu\in\cV_0$ belongs to $U(p)$ iff the 
  center of $\nu$ on $X$ equals $p$. 
  
  Now pick any prime $E$ of $X$
  and let $\nu_E\in\cV_0$ be the associated divisorial valuation.
  We have $\a(\nu_E)\ge0$ since $X$ is tight.
  If $\a(\nu_E)>0$ then Theorem~\ref{thm:basin}~(b) shows
  that $F^n_\bullet\nu_E\in U(p)$ for $n\gg1$. This means that
  $\tF^n$ contracts $E$ to $p$.
  If instead $\a(\nu_E)=0$, then $\nu_E$ 
  must be a rational pencil valuation.
  Theorem~\ref{thm:basin}~(c) shows that $\tF^n$ still
  contracts $E$ to $p$ for $n\gg1$ unless $F_\bullet\nu_E=\nu_E$.
  In the latter case, $F$ is conjugate to a skew product
  by Proposition~\ref{prop:skewproduct}.
\end{proof}
\begin{proof}[Proof of Corollary~\ref{cor:nondivrec}]
  Write $\nu_j=F^j_\bullet(\nu)$ for $j\ge0$. 
  Then $d(F^j,\nu)=\prod_{i=0}^{j-1}d(F,\nu_i)$.
  We may assume $F^n_\bullet\nu\to\nu_*$ as $n\to\infty$,
  since otherwise $F_\bullet\nu=\nu$ and 
  $d(F^j,\nu)=d(F,\nu)^j$.  
  
  If $\nu_*$ is infinitely singular, then 
  $d(F,\cdot)\equiv\la_1$ in a neighborhood of 
  $\nu_*$. Since $\nu_j\to\nu_*$ as $j\to\infty$,
  we get $d(F^{j+1},\nu)=\la_1d(F^j.\nu)$ for $j\gg0$.
 
  When instead $\nu_*$ is irrational, we use the local
  monomial form in Theorem~\ref{thm:stability1}~(b).
  Let $E_z=\{z=0\}$ and $E_w=\{w=0\}$ be the primes
  of $X$ containing $p$ and write 
  $b_z=-\ord_{E_z}(L)$, $b_w=-\ord_{E_w}(L)$
  for a generic affine function $L$ on $\C^2$.
  For $j\ge n$, set
  $s_j\=F^j_*\nu(z)>0$, 
  $t_j\=F^j_*\nu(w)>0$.
  Then $(s_{j+1},t_{j+1})=M(s_j,t_j)$.
  Now $d(F^j,\nu)=b_zs_j+b_wt_j$, so this
  easily implies that 
  $(d(F^j,\nu))_{j\ge n}$
  satisfies an integral linear recursion formula
  of order at most two.
\end{proof}
%
%
%
%
\section{Stability when $\la_2<\la_1^2 $: 
the divisorial case}\label{stab:div}
Next we prove Theorem~A and~B in the case when the 
eigenvaluation $\nu_*$ is divisorial. Recall that this
implies $\la_2\ge\la_1$. 
We distinguish between two subcases: 
$\nu_*$ may or may not be an end in the tree $\cV_1$.

When $\nu_*$ is an end, either
$F$ is conjugate to a skew product; or $F$ is 
a counterexample to the Jacobian conjecture,
see Propositions~\ref{prop:skewproduct}
and~\ref{prop:jacobian}.
\begin{thm}\label{thm:stability2}
  Let $F:\C^2\to\C^2$ be a dominant polynomial mapping 
  with $\la_2<\la_1^2$. Assume that the eigenvaluation 
  $\nu_*$ is divisorial and an end in $\cV_1$.
  Then there exists a tight compactification 
  $X$ of $\C^2$, a prime $E_*$ of $X$, a point $p$
  on $E_*$, and an integer $n\ge1$ 
  such that the lift 
  $\tF:X\dashrightarrow X$ maps $E_*$ onto $E_*$
  and defines a holomorphic fixed point germ at $p$;
  and we are in one of the following situations:
  \begin{itemize}
  \item[(i)]
    for each prime $E\ne E_*$ 
    of $X$, either $\tF^n(E)=E_*$, 
    or $\tF^n$ contracts $E$ to $p$;
  \item[(ii)]
    $F$ is conjugate to a skew product 
    by a polynomial automorphism of $\C^2$
    and the properties in~(i) hold for all
    primes $E$ of $X$ with the exception of a
    single prime invariant by $\tF$.
  \end{itemize}
\end{thm}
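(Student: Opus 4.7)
\emph{Plan of proof.}
The strategy mirrors that of Theorem~\ref{thm:stability1}: build a tight model in which $\nu_*$ becomes visible, extract a local fixed-point germ from the attracting dynamics of $\nu_*$, and then show that every other prime is ultimately mapped into either the distinguished prime or the distinguished point. The key difference is that $\nu_*$ is now divisorial, so its center on any compactification is a prime rather than a point, and we must pick out $p$ by a more delicate argument. To begin, I will construct $X_0$ by iteratively blowing up the center of $\nu_*$ starting from $\P^2$; the process stabilizes once $\nu_*$ appears as the normalized divisorial valuation $\nu_{E_*}$ attached to a prime $E_*$. By Corollary~\ref{cor:tight} the compactification $X_0$ is tight, and $F_*\nu_* = \la_1\nu_*$ forces the rational lift $\tF_0 : X_0 \dashrightarrow X_0$ to map $E_*$ onto $E_*$.

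Next I will select the point $p\in E_*$. The hypothesis that $\nu_*$ is an end of $\cV_1$ splits into two mutually exclusive subcases: either $\a(\nu_*)=0$, so $\nu_*$ is a rational pencil valuation and Proposition~\ref{prop:skewproduct} puts $F$ in skew product form (conclusion~(ii)); or $\a(\nu_*)>0=A(\nu_*)$, in which case Proposition~\ref{prop:jacobian} makes $F$ a counterexample to the Jacobian conjecture. In the skew product subcase the explicit normal form directly furnishes a superattracting fixed point on $E_*$. In the other subcase I will combine the attracting property of $\nu_*$ given by Theorem~\ref{thm:basin} with the tree-preserving nature of $F_\bullet$ to select a unique $F_\bullet$-invariant tangent direction at $\nu_*$; this direction corresponds to a single point of $E_*$, which serves as $p$. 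Finitely many further blowups at $p$ and its successor centers, controlled by Lemma~\ref{lem:staytight} so that tightness is preserved, then make $\tF$ a holomorphic germ fixing $p$.

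With the pair $(E_*,p)$ in place, the contraction statement for the remaining primes transplants directly from the proof of Theorem~\ref{thm:stability1}. For any prime $E\neq E_*$, tightness gives $\nu_E\in\cV_1$ with either $\a(\nu_E)>0$ or $\nu_E$ a rational pencil valuation; Theorem~\ref{thm:basin}(b),(c) then forces $F^k_\bullet\nu_E\to\nu_*$ unless $F_\bullet\nu_E=\nu_E$, and convergence translates into $\tF^k$ sending $E$ either to $E_*$ or into the open set of valuations centered at $p$. The finiteness of the primes of $X$ supplies a uniform $n$. The exceptional $F_\bullet$-invariant prime, should one occur, produces via Proposition~\ref{prop:skewproduct} the skew product conclusion of case~(ii) and accounts for the one exempted prime.

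The hardest step is the selection of the tangent direction in the $A(\nu_*)=0$ subcase. Here $\nu_*$ lies on the boundary of $\cV_1$ rather than in its interior, so the segment-contraction argument used in Theorem~\ref{thm:stability1} does not apply directly: any segment of $\cV_0$ enclosing $\nu_*$ protrudes outside of $\cV_1$. I must instead work with tangent directions at $\nu_*$ that point outside $\cV_1$ and use the spectral uniqueness statement Theorem~\ref{thm:basin}(a) — which depends decisively on the hypothesis $\la_2<\la_1^2$ — to rule out competing invariant directions and single out $p$.
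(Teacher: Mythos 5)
Your overall strategy matches the paper's, and the outer steps (constructing $X_0$ by resolving $\nu_*$ into a prime, blowing up finitely many more times to render the germ holomorphic, and then using Theorem~\ref{thm:basin}~(b),~(c) plus Proposition~\ref{prop:skewproduct} to handle the remaining primes) are all correct. However, the step you yourself identify as hardest---selecting the point $p$---is where your proposal goes astray, and in a way that matters.

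You say that because $\nu_*$ lies on the boundary of $\cV_1$, you ``must instead work with tangent directions at $\nu_*$ that point outside $\cV_1$'' and invoke the spectral uniqueness of Theorem~\ref{thm:basin}~(a) to rule out competing invariant directions. This is backwards. The relevant tangent vector at $\nu_*$ is the \emph{unique one pointing into} $\cV_1$, namely the direction toward $-\deg$. Since $\nu_*$ is an end of $\cV_1$, there is exactly one such direction, and its invariance under the tangent map follows not from Theorem~\ref{thm:basin}~(a) but from the combination of two facts: $\cV_1$ is $F_\bullet$-invariant (Proposition~\ref{prop:V1invariant}), and $\nu_*$ attracts nearby valuations in $\cV_1$ (Theorem~\ref{thm:basin}~(b)). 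Concretely, if $\nu\in\cV_1$ is close to $\nu_*$ with $\nu<\nu_*$, then $F_\bullet\nu\in\cV_1$ is again close to $\nu_*$, and the only way to approach an end of $\cV_1$ from within $\cV_1$ is along that one direction; so the direction toward $-\deg$ is forward-invariant. The tangent map on $E_*\simeq\P^1$ typically has several fixed points, and Theorem~\ref{thm:basin}~(a) (which is about eigenvaluations in $\cV_0$, not about fixed tangent vectors) does not adjudicate among them; what singles out the right direction is the tree-geometric fact just described. The paper makes this concrete by iteratively blowing up the intersection point $p_m = E_*\cap E_m$, producing a sequence of segments $I_m=\,]\nu_m,\nu_*[$ along the $-\deg$ direction that are eventually $F_\bullet$-invariant, and then taking $p=p_m$ for $m$ large.

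Two smaller remarks. First, your split into the two subcases $\a(\nu_*)=0$ and $\a(\nu_*)>0=A(\nu_*)$ (rational pencil versus Jacobian counterexample) is correct as a dichotomy, but the paper's construction of $(X,E_*,p)$ does not need it; both subcases are handled by the same blowup sequence. Second, the skew-product normal form does not directly give a superattracting germ in general---the statement only requires a holomorphic fixed-point germ, and that is what the construction delivers.
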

\begin{cor}\label{cor:divendrec}
  Under the assumptions of Theorem~\ref{thm:stability2},
  for any $\nu\in\cV_1$, there exists $n=n(\nu)$
  such that the sequence $(d(F^j,\nu))_{j\ge n}$ satisfies an
  integral linear recursion formula of order 1 or 2.
  In particular, $(\deg F^j)_{j\ge n}$ satisfies
  such a recursion formula for $n$ large enough.
\end{cor}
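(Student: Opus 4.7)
The proof mirrors that of Corollary~\ref{cor:nondivrec}. Write $\nu_j := F^j_\bullet \nu$, so that $d(F^j,\nu) = \prod_{i=0}^{j-1} d(F,\nu_i)$. We may assume $\nu_j \to \nu_*$ as $j \to \infty$: otherwise $F_\bullet \nu = \nu$ and $d(F^j,\nu) = d(F,\nu)^j$ is an order-one recursion (which covers the invariant rational pencil case of Theorem~\ref{thm:stability2}(ii)). We may also assume $\nu_j \ne \nu_*$ for every $j$; otherwise $d(F,\nu_j) = d(F,\nu_*) = \la_1$ for $j$ large, again an order-one recursion. This second reduction absorbs in particular those divisorial $\nu = \nu_E$ for which $\tF^n(E) = E_*$ in the sense of Theorem~\ref{thm:stability2}(i).

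Let $X$, $E_*$, $p$ be as provided by Theorem~\ref{thm:stability2}. Since every prime $E \ne E_*$ is either sent to $E_*$ or contracted to $p$ by $\tF^n$, an elementary argument shows that the center of $\nu_j$ on $X$ equals $p$ for $j$ large enough. Work in local coordinates $(z,w)$ at $p$ with $E_* = \{z=0\}$, and consider first the principal case where $p$ is a crossing $p = E_* \cap E'$ with $E' = \{w=0\}$ another prime of $X$. Since $\tF(p) = p$ and $\tF^{-1}(X \setminus \C^2) \subset X \setminus \C^2$ locally at $p$, we may write $\tF^* z = z^{\la_1} w^b u$ and $\tF^* w = z^c w^d v$ with $u, v$ holomorphic units at $p$ and $b, c, d \in \N$; the exponent $\la_1$ in $\tF^* z$ records the identity $F_* \nu_* = \la_1 \nu_*$. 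Introduce the unnormalized values $\alpha_j := F^j_* \nu(z)$ and $\beta_j := F^j_* \nu(w)$, well defined because the center of $\nu_j$ is $p$ so that $\nu_j$ extends to $\cO_{X,p}$. Then $(\alpha_j, \beta_j)$ evolves by the integer matrix $M = \left(\begin{smallmatrix}\la_1 & b \\ c & d\end{smallmatrix}\right)$, and since a generic affine function takes the local form $L = z^{-b_{E_*}} w^{-b_{E'}}\tilde L$ at $p$ with $\tilde L$ a unit, we obtain
\begin{equation*}
    d(F^j,\nu) = b_{E_*}\alpha_j + b_{E'}\beta_j,
\end{equation*}
which satisfies the integer linear recursion of order at most two given by the characteristic polynomial of $M$.

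The main obstacle is the situation where $p$ lies only on $E_*$. Here $\tF^* z = z^{\la_1} u$ for a holomorphic germ $u$ at $p$, not a priori a unit, and the linear-algebra argument above fails. The strategy is to exploit that $\nu_*$ is an end in $\cV_1$: by Lemma~\ref{lem:staytight} this forces $\a(\nu_*) = 0$ or $A(\nu_*) = 0$, so that by Propositions~\ref{prop:skewproduct} and~\ref{prop:jacobian} we are either in the rigid skew-product situation $F(x,y) = (P(x), Q(x,y))$ with $\deg P = \la_1$ — where the recursion for $d(F^j,\nu)$ is obtained by a direct computation on the explicit normal form and turns out to be of order one on each factor — or in the Jacobian-counterexample case. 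In the latter case one performs a further blowup above $p$ which, by the end property, must leave $\cV_1$ in a controlled way, and reduces after an appropriate iterate to the crossing analysis of the previous paragraph.
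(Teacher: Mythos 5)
Your crossing-case analysis (second paragraph) is correct and is essentially the paper's argument: one tracks the unnormalized values $F^j_*\nu(z)$, $F^j_*\nu(w)$ at the fixed point $p$, observes they evolve by a $2\times 2$ integer matrix, and reads off $d(F^j,\nu)$ as an integer linear combination of them. Your identification of the exponent in $\tF^*z$ with $\la_1$ and the computation of $d(F^j,\nu)=b_{E_*}\alpha_j+b_{E'}\beta_j$ are both right, and the two preliminary reductions (assume $\nu_j\to\nu_*$ and $\nu_j\ne\nu_*$) mirror the paper's treatment of Corollary~\ref{cor:nondivrec}.

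The third paragraph, however, addresses a case that does not occur. In the proof of Theorem~\ref{thm:stability2}, the point $p=p_m$ is \emph{constructed} as the intersection of $E_*$ with the unique prime $E_m$ meeting $E_*$. That $E_*$ meets exactly one other prime is itself a consequence of $\nu_*$ being an end of $\cV_1$: if $E_*$ met a prime $E'$ with $\nu_{E'}>\nu_*$, then $\nu_{E'}\in\cV_1$ by tightness, contradicting maximality of $\nu_*$ in $\cV_1$. (The same observation is what guarantees that, once the $\nu_j$ are close enough to $\nu_*$, they must be centered at $p$ rather than elsewhere on $E_*$ — a point your ``elementary argument'' could usefully make explicit.) So the situation where $p$ lies only on $E_*$ cannot arise under the hypotheses of Theorem~\ref{thm:stability2}, and your attempt to handle it — invoking Propositions~\ref{prop:skewproduct} and~\ref{prop:jacobian} and then a further blowup ``which must leave $\cV_1$ in a controlled way'' — is vague and would not survive scrutiny as a proof. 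It is not a gap in the result, but the paragraph should simply be replaced by the observation that $p$ is always a crossing; the paper's one-line proof does exactly this by citing the crossing case of Corollary~\ref{cor:nondivrec}.
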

When $\nu_*$ is not an end, so that $\a(\nu_*)>0>A(\nu_*)$,
the result is slightly less precise and the proof
more subtle.
\begin{thm}\label{thm:stability3}
  Let $F:\C^2\to\C^2$ be a dominant polynomial mapping 
  with $\la_2<\la_1^2$. Assume that the eigenvaluation 
  $\nu_*\in\cV_1$ is divisorial and not an end in $\cV_1$.
  Then there exists a tight compactification $X$ of
  $\C^2$, a prime $E_*$ of $X$ and an integer $n\ge1$ 
  such that the lift 
  $\tF:X\dashrightarrow X$ maps $E_*$ onto $E_*$,
  and we are in one of the following situations:
  \begin{itemize}
  \item[(i)]    
    for each prime $E\ne E_*$ of $X$,
    either $\tF^n(E)=E_*$, 
    or $\tF^n$ contracts $E$ to a point
    on $E_*$ at which all iterates of $\tF$ are holomorphic;
  \item[(ii)]    
    $F$ is conjugate to a skew product 
    by a polynomial automorphism of $\C^2$
    and the properties in~(i) hold for all
    primes $E$ of $X$ with the exception of a
    single prime invariant by $\tF$.
  \end{itemize}
\end{thm}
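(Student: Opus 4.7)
The plan is to follow the template of Theorems~\ref{thm:stability1} and~\ref{thm:stability2}: first realize the eigenvaluation as a prime of a tight compactification, then analyze the asymptotic behavior of all other primes via Theorem~\ref{thm:basin}, and finally refine the compactification to arrange holomorphy at the contraction points. What is genuinely new compared with the divisorial-end case of Theorem~\ref{thm:stability2} is that distinct primes may now be contracted to distinct points of $E_*$, and each such point must be made to avoid the indeterminacy of every iterate of $\tF$.

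Since $\nu_*\in\cV_1$ is divisorial, applying Corollary~\ref{cor:tight} to $\nu_*$ produces, after finitely many blowups of $\P^2$, a tight admissible compactification $X_0$ of $\C^2$ on which $\nu_*$ is proportional to $\ord_{E_*}$ for some prime $E_*$. The relation $F_*\nu_*=\la_1\nu_*$, together with the description of $F_*$ on divisorial valuations given in Section~\ref{sec:indmap}, then forces the lift $\tF:X_0\dashrightarrow X_0$ to send $E_*$ onto $E_*$.

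For any other prime $E$ of $X_0$, tightness gives $\nu_E\in\cV_1$, so either $\a(\nu_E)>0$ or $\nu_E$ is a rational pencil valuation with $\a(\nu_E)=0$. In the first case Theorem~\ref{thm:basin}(b) yields $F^n_\bullet\nu_E\to\nu_*$; in the second Theorem~\ref{thm:basin}(c) gives the same convergence unless $F_\bullet\nu_E=\nu_E$, and the latter case places us in the skew-product alternative (ii) by Proposition~\ref{prop:skewproduct}. Whenever the convergence $F^n_\bullet\nu_E\to\nu_*$ holds, the characterization of the topology of $\cV_0$ in terms of centers on $X_0$ implies that, for $n$ sufficiently large, either $F^n_*\nu_E$ is proportional to $\ord_{E_*}$ (so $\tF^n$ maps $E$ onto $E_*$) or the center of $F^n_\bullet\nu_E$ on $X_0$ is a single closed point of $E_*$ (so $\tF^n$ contracts $E$ to that point).

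The main obstacle is then to refine $X_0$ to a still-tight compactification $X$ in which every contraction point $p\in E_*$ produced above satisfies $p\notin I(\tF^k)$ for every $k\ge1$. The hypothesis that $\nu_*\in\cV_1$ is divisorial but not an end yields $\a(\nu_*)>0$ and $A(\nu_*)<0$, so Lemma~\ref{lem:staytight} guarantees that every blowup of a closed point of $E_*$ preserves tightness. Since $\tF(E_*)=E_*$ and $E_*$ is a smooth curve, $\tF|_{E_*}$ extends to an everywhere-defined endomorphism of $E_*$, while the indeterminacy locus of $\tF$ on $X_0$ meets $E_*$ in a finite set. I will iteratively blow up the forward $\tF|_{E_*}$-orbits of the finitely many relevant points of $E_*$---namely those realizing contractions and those in the indeterminacy locus---so as to separate the two kinds of orbits. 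The hardest step, which I expect to be the main obstacle, is to show that the procedure terminates, and that it also resolves the primes created along the way: for this I will combine the spectral analysis of Appendix~A (using $\la_2<\la_1^2$) with the fact that $F_*$ contracts toward $\theta_*=Z_{\nu_*}$, to force the relevant dynamics on $E_*$ to be eventually periodic on a finite set. A uniform choice of $n\ge1$ for the resulting $X$ then delivers either case~(i), or case~(ii) when the skew-product alternative occurred for some $\nu_E$.
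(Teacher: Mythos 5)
Your skeleton matches the paper's: realize $\nu_*$ on a tight $X_0$ via Corollary~\ref{cor:tight}, use Theorem~\ref{thm:basin} to show every other prime is eventually mapped onto $E_*$ or contracted to a point of $E_*$ (with the exceptional fixed pencil valuation giving case~(ii)), and then refine the compactification so that the contraction points are eventually free of indeterminacy. The observation that blowups along $E_*$ preserve tightness because $\a(\nu_*)>0>A(\nu_*)$ is also exactly right.

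The genuine gap is in the refinement step, which you yourself flag as "the main obstacle." Your plan---"iteratively blow up the forward $\tF|_{E_*}$-orbits of the finitely many relevant points" and appeal to the spectral contraction toward $\theta_*$ to "force the relevant dynamics on $E_*$ to be eventually periodic"---does not work as stated and in any case is not an argument. The difficulty is that after you blow up a point $p$ on $E_*$, the new exceptional curve may itself create indeterminacy or receive contractions, and a naive orbit-blowup recursion has no termination mechanism. What the paper actually does (Lemma~\ref{lem:quotient}) is quite different: it restricts attention to periodic points of $\tF|_{E_*}$ whose orbit meets indeterminacy (the infinite orbits are handled trivially by increasing $n$), blows up each $p_k$ a carefully chosen number $m_k$ of times along $E_*$, and uses the fact that $F_\bullet$ acts on the adjacent segments $J_k=\,]\mu_k,\nu_*[\,$ by M\"obius maps with nonnegative integer coefficients. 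The crucial quantitative input is that the return map $F^l_\bullet$ on $J_0$ is a strict contraction near $\nu_*$---it cannot be the identity, by Theorem~\ref{thm:basin}(a)---so the product of one-sided derivatives $\prod_{k}s_k<1$; this is what allows one to solve the inequalities~\eqref{e3} for the $m_k$ and conclude that $F_\bullet$ maps $U_k$ into $U_{k+1}$, i.e.\ that $\tF$ is holomorphic at the new points $q_k$. Nothing in your proposal identifies this contraction estimate or the calibrated choice of blowup depths $m_k$; without it, the claim that the procedure "terminates" is unsupported. The cited spectral contraction toward $\theta_*$ gives convergence of $F^n_\bullet\nu$ to $\nu_*$ but by itself says nothing about how to choose the compactification so the fixed point germs along $E_*$ become holomorphic.
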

\begin{cor}\label{cor:divnonendrec}
  Under the assumptions of Theorem~\ref{thm:stability3},
  there exist $l\ge1$ and, for any $\nu\in\cV_1$, 
  $n=n(\nu)$ such that 
  the sequence $(d(F^{lj+k},\nu)_{j\ge 0}$
  satisfies an integral linear recursion formula of 
  order 1 or 2 for any $k\ge n$.
  In particular, $(\deg F^{lj+k})_{j\ge 0}$ satisfies
  such a recursion formula for any sufficiently large $k$.
\end{cor}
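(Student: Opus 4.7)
My plan adapts the method of Corollaries~\ref{cor:nondivrec} and~\ref{cor:divendrec}. Set $\nu_j\=F^j_\bullet\nu$, so that $d(F^j,\nu)=\prod_{i=0}^{j-1}d(F,\nu_i)$; it therefore suffices to control the long-run behaviour of the factors $d(F,\nu_i)$. By Theorem~\ref{thm:basin}, either $F_\bullet\nu=\nu$---in which case $d(F^j,\nu)=d(F,\nu)^j$ already satisfies a geometric recursion and any $l$ works---or $\nu_j\to\nu_*$ in $\cV_0$. I assume the second alternative throughout.

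The new feature compared with the end case is that $\nu_*$ has nontrivial tangent directions in $\cV_1$. Since $\nu_*$ is divisorial, these tangent directions are parametrised by the points of the prime $E_*$ provided by Theorem~\ref{thm:stability3}: for any $\mu>\nu_*$ sufficiently close to $\nu_*$, the relevant tangent direction is encoded by the center $c(\mu)\in E_*$ of $\mu$ on $X$. The induced action of $F_\bullet$ on tangent directions is the rational self-map $\tF|_{E_*}\colon E_*\dashrightarrow E_*$, and Theorem~\ref{thm:stability3}(i) ensures that the orbits we need to consider avoid its indeterminacy locus. The primes of $X$ distinct from $E_*$ intersect $E_*$ in a finite set, and the rigidity built into Theorem~\ref{thm:stability3} confines each dynamically relevant $\tF|_{E_*}$-orbit to a finite invariant set whose periods are bounded in terms of the combinatorial type of $X$ alone. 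Let $l$ be the $\lcm$ of these periods. Then for $k\ge n(\nu)$ sufficiently large, $\nu_k$ lies in a small neighborhood of $\nu_*$ and its center $p_k\=c(\nu_k)\in E_*$ is $l$-periodic for $\tF|_{E_*}$, so $c(\nu_{k+lj})=p_k$ for every $j\ge 0$. The iterates of $F^l_\bullet$ starting at $\nu_k$ consequently remain inside the single branch at $\nu_*$ determined by $p_k$.

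Within this fixed branch the dynamics reduces to the situation of Corollary~\ref{cor:divendrec}, or to a monomial model as in Theorem~\ref{thm:stability1}(a) when the local normal form of $F^l$ at $p_k$ is of that type. A computation modelled on the proof of Corollary~\ref{cor:nondivrec} then produces an integral linear recursion of order $1$ or $2$ for $(d(F^{lj+k},\nu))_{j\ge0}$; the order-$2$ case comes from a $2\times 2$ integer matrix of monomial weights attached to the local form of $F^l$ at $p_k$, exactly as in the irrational case of Corollary~\ref{cor:nondivrec}. Specialising to $\nu=-\deg$ yields the assertion for $(\deg F^{lj+k})_{j\ge 0}$. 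The principal obstacle I foresee is the uniform preperiodicity claim above, namely that the orbit $(c(\nu_i))$ eventually enters a finite $\tF|_{E_*}$-invariant subset with periods bounded independently of $\nu$; this is what lets $l$ be chosen once and for all. It relies crucially on the tightness of $X$ and on the holomorphy of every iterate $\tF^m$ along the orbits of the primes contracted by some $\tF^n$, and is the point where the delicate rigidity afforded by Theorem~\ref{thm:stability3}(i)--(ii) is really exploited.
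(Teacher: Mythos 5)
Your overall shape is right---reduce to the case $\nu_j\to\nu_*$, track the centers $p_k$ of $\nu_k=F^k_\bullet\nu$ on $E_*$ under $\tF|_{E_*}$, and then apply the computation of Corollaries~\ref{cor:nondivrec} and~\ref{cor:divendrec} once the behaviour of $d(F,\nu_k)$ has stabilised. But the ``uniform preperiodicity claim'' you flag as the principal obstacle is in fact false, and this is where the argument breaks. The orbit $(p_k)$ under the rational map $\tF|_{E_*}$ on $E_*\simeq\P^1$ is in general \emph{not} eventually periodic; $\tF|_{E_*}$ typically has degree $>1$, so infinite (non-preperiodic) orbits are generic. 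There is simply no reason for $(p_k)$ to enter a finite invariant set, so your $l$-periodicity of the centers does not hold, and the reduction to a single branch at $\nu_*$ collapses.

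What the paper does instead is introduce the finite exceptional set $Z\subset E_*$ of points $p$ at which $d(F,\cdot)$ is \emph{not} identically $\la_1$ on the open set $U(p)$ of valuations centered at $p$. Two cases are then distinguished. If the orbit $(p_k)$ eventually avoids $Z$ (which happens automatically when the orbit is infinite), then $d(F^{k+1},\nu)=\la_1\,d(F^k,\nu)$ for all large $k$, giving an order-one recursion with no periodicity required at all. If instead the orbit is eventually periodic and meets $Z$, its period divides $l$, where $l$ is chosen only as the lcm of periods of periodic orbits of $\tF|_{E_*}$ meeting $Z$, and one concludes as in Corollary~\ref{cor:nondivrec}. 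So you do not need (and cannot get) uniform preperiodicity: the correct dichotomy is ``eventually outside $Z$'' versus ``periodic through $Z$,'' and the infinite-orbit case is handled by the constancy of $d(F,\cdot)$ away from the finite set $Z$, not by periodicity. This also explains why $l$ can be chosen independently of $\nu$ without any confinement claim: only the finitely many periodic orbits through $Z$ contribute to it.
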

\begin{exam}\label{ex:recursion}
  Define $F(x,y)=(x(x-y^2),x+y)$ and set $a_j=\deg(F^j)$.
  Then $a_0=1$, $a_1=3$, $a_2=6$ and 
  $a_j=a_{j-1}+a_{j-2}+2a_{j-3}$ for $j\ge3$.
  One checks that $\la_1=2$, $\la_2=3$, and that
  $(a_j)_0^\infty$ does not satisfy any integral 
  recursion formula of order smaller than three.
\end{exam}
\begin{proof}[Proof of Theorem~\ref{thm:stability2}]
  The proof is similar to the infinitely
  singular case of Theorem~\ref{thm:stability1}.
  Let $X_0$ be the smallest admissible compactification
  such that the center of $\nu_*$ on $X_0$ is a prime 
  $E_*$ of $X_0$: it is obtained from $\P^2$ by
  successively blowing up the center of $\nu_*$, so
  $X_0$ is tight by Corollary~\ref{cor:tight}.
  Inductively define a sequence $(X_m)_{m\ge0}$ 
  of tight compactifications by letting 
  $X_{m+1}$ be the blowup of $X_m$ at the unique 
  intersection point of (the strict transform of) $E_*$
  and the unique prime $E_m\ne E_*$ of $X_m$
  intersecting $E_*$.

  Let $\nu_m\in\cV_0$ be the divisorial valuation associated 
  to $E_m$. Then $(\nu_m)_{m\ge0}$ form a sequence of
  valuations increasing to $\nu_*$. 
  Set $I_m=\,]\nu_m,\nu_*[$. 
  As in~\cite[\S 7.3]{eigenval} we see that $F_\bullet$
  maps $I_m$ into itself for large $m$.
  Moreover, we have $F_\bullet U_m\subset U_m$, where
  $U_m\subset\cV_0$ is the set of valuations 
  whose center on $X_m$ equals $p_m$. 
  Set $X=X_m$ and $p=p_m$. 
  Then the lift $\tF:X\dashrightarrow X$
  maps $E_*$ onto itself and defines a holomorphic 
  fixed point germ at~$p$.

  Let $U$ be the open neighborhood of $\nu_*$ in 
  $\cV$ consisting of valuations whose center on $X$ is 
  contained in $E_*$. 
  Pick a prime $E\neq E_*$ of $X$. If $F_\bullet\nu_E=\nu_E$, then
  $E$ is unique with this property, $\nu_E$ is a 
  rational pencil valuation and $F$ is conjugate to 
  a skew product. For all other primes $E$ we have 
  $F^n_\bullet\nu_E\in U$ for $n\gg1$. 
  If $F^n_\bullet\nu_E=\nu_*$, then $\tF^n(E)=E_*$.
  Otherwise $E$ is contracted by $\tF^n$ to $p\in E_*$.
  This completes the proof.
\end{proof}
 \begin{proof}[Proof of Corollary~\ref{cor:divendrec}]
   We can use the same proof as of
   Corollary~\ref{cor:nondivrec} in the irrational case.
   Indeed, let $E$ and $E_*$ 
   be the primes of $X$ containing $p$
   and write $E=\{z=0\}$, $E_*=\{w=0\}$ for
   coordinates $(z,w)$ at $p$.
   We may perhaps not arrange that $F$ is 
   monomial in $(z,w)$, but if we set 
   $s_j\=F^j_*\nu(z)>0$, 
   $t_j\=F^j_*\nu(w)>0$,
   we will nevertheless have
   $(s_{j+1},t_{j+1})=M(s_j,t_j)$ 
   for some $2\times2$ matrix $M$ with nonnegative 
   integer entries, see~\cite[Theorem~7.4]{eigenval} 
 \end{proof}
\begin{proof}[Proof of Theorem~\ref{thm:stability3}]
  Since $\nu_*$ is not an end in $\cV_1$, we have
  $\a(\nu_*)>0>A(\nu_*)$.
  Let $X_0$ be the smallest admissible compactification
  such that the center of $\nu_*$ on $X_0$ is a prime 
  $E_*$ of $X_0$: it is obtained from $\P^2$ by
  successively blowing up the center of $\nu_*$, so
  $X_0$ is tight by Corollary~\ref{cor:tight}.
  \begin{lem}\label{lem:quotient}
    There exists a tight compactification $X$ of $\C^2$
    dominating $X_0$ 
    such that the lift $\tF:X\dashrightarrow X$ 
    is holomorphic at all periodic points of 
    $\tF|_{E_*}$, where $E_*$ denotes the center of
    $\nu_*$ on $X$.
  \end{lem}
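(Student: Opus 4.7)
The plan is to build $X$ from $X_0$ by a finite sequence of blowups above $E_*$ so that, at each periodic point $q$ of $\tF|_{E_*}$ which is currently an indeterminacy point of $\tF$, the open set $U(q)\subset\cV_0$ of valuations centered at $q$ is mapped by $F_\bullet$ into $U(\tF(q))$. By the standard valuative criterion this is exactly the condition for $\tF$ to be holomorphic at $q$.

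First I would observe that $\tF|_{E_*}:E_*\to E_*$ is a morphism of the smooth curve $E_*$: the identity $F_*\nu_*=\la_1\nu_*$ ensures that $E_*$ is sent onto $E_*$ as a divisor, and a rational map from a smooth projective curve to a projective variety extends everywhere. The indeterminacy set of $\tF$ on $X_0$ is finite, so only finitely many periodic orbits $O_1,\dots,O_s$ of $\tF|_{E_*}$ meet this indeterminacy set, and only these need to be fixed.

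Next, for a single such orbit $O=\{q^0,\dots,q^{k-1}\}$ of period $k$, I would exploit the attracting property of $\nu_*$ from Theorem~\ref{thm:basin}(b): the tangent directions at $\nu_*$ corresponding to the $q^i$ are permuted cyclically by the tangent action of $F_\bullet$, and $F_\bullet^k$ contracts a neighborhood of $\nu_*$ in each such direction back toward $\nu_*$. Translating to the surface, a finite sequence of blowups above $q^0$ along the iterated strict transform of $E_*$ replaces $q^0$ by a deeper point $\tilde q^0$ for which $F_\bullet^k\,U(\tilde q^0)\subset U(\tilde q^0)$; doing the same above each $q^i$ and composing around the orbit then yields $F_\bullet\,U(\tilde q^i)\subset U(\tilde q^{i+1})$, which forces $\tF$ to be holomorphic at every $\tilde q^i$.

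Each blowup performed along the way preserves tightness. When its center lies only on (a strict transform of) $E_*$, the assumption that $\nu_*$ is divisorial and not an end in $\cV_1$ gives $\a(\nu_*)>0$ and $A(\nu_*)<0$, and a simple induction along the resulting chain of exceptional primes propagates these strict inequalities, so Lemma~\ref{lem:staytight} applies. When the center is an intersection of two primes in $\cV_1$, tightness is automatic since $\cV_1$ is a subtree. Taking the common refinement of the compactifications produced for $O_1,\dots,O_s$ produces a tight $X$ with the required property. The main obstacle is the quantitative control of the contraction of $F_\bullet^k$ in each periodic direction at $\nu_*$: one must prove that the skewness-contraction is strong enough for a finite number of blowups along $E_*$ to realize $F_\bullet^k\,U(\tilde q^0)\subset U(\tilde q^0)$, which is where the full strength of the attracting statement in Theorem~\ref{thm:basin} enters.
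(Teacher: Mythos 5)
Your plan is the right one and matches the paper's broad strategy: blow up above the periodic points to deep enough levels, use tightness preservation, then translate the resulting valuation-space inclusions into holomorphy of $\tF$. But the gap you flag at the end is real and is precisely where all the work lives, and there is a second gap you did not notice.

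The first gap: Theorem~\ref{thm:basin}(b) is a \emph{pointwise} convergence statement ($F^n_\bullet\nu\to\nu_*$ for each $\nu$ with $\a(\nu)>0$); it does not by itself give local skewness-contraction of $F^k_\bullet$ in a tangent direction at $\nu_*$. The mechanism the paper uses instead is structural: restricted to a short segment parameterized by skewness, $F_\bullet$ is a M\"obius map with nonnegative integer coefficients, and a M\"obius selfmap of an invariant segment fixing an endpoint is either the identity or a strict contraction (this is~\cite[Lemma~5.5]{eigenval}). The identity alternative is then ruled out by Theorem~\ref{thm:basin}, since it would produce a whole segment of valuations with $\a>0$ on which $F^n_\bullet$ fails to converge to $\nu_*$. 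This yields the quantitative statement that the product of one-sided derivatives $s_k$ of $F_\bullet$ at $\nu_*$ along the periodic orbit is strictly less than $1$. You have not shown this, and it does not follow merely from the attraction statement you cite.

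The second gap: having $F^k_\bullet U(\tilde q^0)\subset U(\tilde q^0)$ at a single point of the orbit does \emph{not} automatically give the one-step inclusions $F_\bullet U(\tilde q^i)\subset U(\tilde q^{i+1})$ once you ``do the same'' at each $q^i$. The sizes of the neighborhoods at consecutive points must be coordinated: the paper chooses blowup depths $m_0,\dots,m_{l-1}$ (with $m_l=m_0$) so that $s_k/(m_kb_*+b_k)\le(1-\e)/(m_{k+1}b_*+b_{k+1})$ for all $k$, which is exactly what propagates the one-step contraction around the cycle; the existence of such $m_k$'s is what the inequality $\prod s_k<1$ buys you. Without this coordination your composed orbit argument breaks down. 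Finally, a small caution on tightness: the strict inequalities $\a<0$, $A>0$ need not propagate along a chain of free blowups (they can degrade to equalities and then fail). What saves the construction is that after the first blowup at $p_k$ all subsequent centers are intersection points of two primes of $\cV_1$, so tightness is automatic from the subtree property; your phrasing makes it sound like the free-blowup induction does the job, which it does not in general.
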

  Using this lemma we now conclude the proof.
  Let $U$ be the open neighborhood of $\nu_*$ in 
  $\cV$ consisting of valuations whose center on $X$ is 
  contained in $E_*$. 
  Pick a prime $E\neq E_*$ of $X$. If $F_\bullet\nu_E=\nu_E$, then
  $E$ is unique with this property, $\nu_E$ is a 
  rational pencil valuation and $F$ is conjugate to 
  a skew product. For all other primes $E$ we have 
  $F^n_\bullet\nu_E\in U$ for $n\gg1$. 
  If $F^n_\bullet\nu_E=\nu_*$, then $\tF^n(E)=E_*$.
  Otherwise $E$ is contracted by $\tF^n$ to a point $p\in E_*$.
  By increasing $n$ we can assume that the orbit of $p$
  under $\tF|_{E_*}$ is either periodic or infinite. 
  In the first case, $\tF$ is holomorphic at $p$ by
  Lemma~\ref{lem:quotient}. In the second case, we can
  by increasing $n$ assume that the orbit does not intersect
  the indeterminacy set of $\tF$.
  This completes the proof.
\end{proof}
\begin{proof}[Proof of Lemma~\ref{lem:quotient}]
  Write $\tF_0$ for the lift of $F_0$ to $X_0$.
  Let $Z\subset E_*$ be the (finite) set of periodic 
  points of $\tF_0|_{E_*}$ whose orbit contains an indeterminacy 
  point of $\tF_0$. 
  First assume for simplicity that $Z$
  consists of a single periodic orbit 
  $p_0$, $p_1$,\dots, $p_l=p_0$ 
  of length $l\ge1$ for $\tF_0|_{E_*}$.

  Let $\mu_k\in\cV_0$ be the divisorial valuation 
  associated to the blowup of $X_0$ at $p_k$, $0\le k\le l$.
  The segment $J_k\=\,]\mu_k,\nu_*[$ in $\cV_1$ has
  length $(b_*b_k)^{-1}$ in the skewness metric,
  where $b_*=-\ord_{E_*}(L)$, $b_k=-\ord_{p_k}(L)$ for
  a generic affine function $L$ on $\C^2$.
    
  For large positive integers $m_0,m_1,\dots,m_l=m_0$ 
  to be determined shortly, define valuations 
  $\nu_k$, $0\le k<l$ as follows: blow up 
  $p_k$, then successively $m_k$ times
  blow up the intersection point between the 
  (strict transform of) $E_*$ and the previously
  obtained exceptional divisor.
  The segment $I_k\=\,]\nu_k,\nu_*[\,\subset J_k$ 
  then has length $(b_*(b_k+m_kb_*))^{-1}$ in the
  skewness metric.

  For $m_k$ large, the segment $I_k$ is small enough
  so that $F_\bullet$ maps $I_k$ homeomorphically onto 
  a subsegment of $J_{k+1}$. Moreover, when the segments 
  are parameterized by skewness, $F_\bullet$ 
  is given by a M{\"o}bius map with nonnegative integer
  coefficients, see~\cite[Theorem~7.4]{eigenval}.
  Thus the one-sided derivative of
  $F_\bullet$ on $I_k$ at $\nu_*$ is a well
  defined rational number $s_k>0$.

  The key fact is now that the M{\"o}bius property above 
  implies that the iterate $F^l_\bullet$ 
  maps the segment $I_0$ into itself and that either
  $F^l_\bullet\equiv\id$ or $F^l_\bullet$ is a contraction
  on $I_0$, see~\cite[Lemma~5.5]{eigenval}. 
  The former case is impossible by Theorem~\ref{thm:basin}.
  Hence we conclude that $\prod_{k=0}^{l-1}s_k<1$.

  Pick $\e>0$ such that $\prod_{k=0}^{l-1}s_k\le(1-2\e)^l$.
  We may then pick the integers $m_k$ (with $m_0 = m_l$) 
  above arbitrarily large so that 
  \begin{equation}\label{e3}
    \frac{s_k}{m_kb_*+b_k}\le(1-\e)\frac{1}{m_{k+1}b_*+b_{k+1}}
    \quad\text{for $0\le k<l$};
  \end{equation}
  we just need to make $m_{k+1}/m_k$ slightly smaller
  than $1/s_k$.
  By the definition of $s_k$,~\eqref{e3} implies that
  $F_\bullet$ maps $I_k$ into $I_{k+1}$.
  Let $U_k$ be the open subset of $\cV_0$ consisting of
  valuations whose tree retraction to the closed 
  segment $\overline{I_k}$ is contained in $I_k$. 
  Then $F_\bullet$ maps $U_k$ into $U_{k+1}$ for $0\le k<l$, 
  assuming the $m_k$'s are large enough 
  (again, by convention, $U_l=U_0$).

  Let $X$ be the smallest admissible compactification 
  of $\C^2$ dominating $X_0$ such that the center of 
  $\nu_k$ is one-dimensional for $0\le k<l$. 
  Then $X$ is obtained from $X_0$ by performing all
  the blowups mentioned above, so $X$ is tight 
  and the morphism $X\to X_0$ induced by the identity on 
  $\C^2$ is an isomorphism above $X_0\setminus Z$.
  
  The center of $\nu_k$ on $X$ intersects 
  (the strict transform of) $E_*$ at some point 
  $q_k$ and the open set $U_k$ above exactly consists 
  of the valuations in $\cV_0$ whose center on 
  $X$ equals $q_k$.
  Thus $F_\bullet U_k\subset U_{k+1}$ translates into 
  the lift $\tF:X\dashrightarrow X$ of $F$ 
  being holomorphic at~$q_k$.

  This completes the proof when $Z$ consists of
  a single periodic orbit. In general, there are 
  several orbits, but we can handle them one at a time.
\end{proof}
\begin{proof}[Proof of Corollary~\ref{cor:divnonendrec}]
  There is a finite subset $Z\subset E_*$ such that if
  $p\in E_*\setminus Z$,
  we have $d(F,\cdot)\equiv\la_1$ on the 
  open set $U(p)\subset\cV_0$ of valuations whose center
  on $X$ is $p$. We may pick $l\ge1$ such that any periodic
  orbit of $\tF|{E_*}$ intersecting $Z$ has order
  dividing $l$.

  As in the proof of Corollary~\ref{cor:nondivrec} 
  we may assume $F^n_\bullet\to\nu_*$ as $n\to\infty$.
  Similarly, we may assume $F^n_\bullet\nu\ne\nu_*$ 
  for all $n$.

  Pick $n=n(\nu)$ so that the center of 
  $F^k_\bullet\nu$ on $X$ is a point 
  $p_k\in E_*$ for $k\ge n$.
  After increasing $n$ we may assume that the 
  orbit $p_n, p_{n+1},\dots$ is either disjoint from $Z$,
  or periodic of order (dividing) $l$.
  In the first case, 
  $d(F^{k+j+1},\nu)=\la_1d(F^{k+j},\nu)$ for any $j\ge0$.
  In the second case we conclude the
  proof as in Corollary~\ref{cor:nondivrec}.
\end{proof}
%
%
%
%
\section{Maximum topological degree: $\la_2= \la_1^2$}\label{maximum}
Next we turn to maps with maximum topological 
degree $\la_2=\la_1^2$. As we do not have an analog of Theorem~\ref{thm:spectral} at our disposal,
we base our analysis on a 
detailed description of the dynamics of $F_\bullet$ on
$\cV_1$ using tree arguments.
%
%
\subsection{Dynamics on $\cV_1$}\label{s:V1dyn}
The results of this section form the basis for the proof of 
Theorems~A,~B and~C in the case $\la_2 = \la_1^2$. 
Define
\begin{equation}\label{e:TF}
  \cT_F\=\{\nu\in\cV_1\ |\ F_\bullet\nu=\nu\}.
\end{equation}
This set is nonempty by Proposition~\ref{prop:eigenval}.
The following three results summarize the structure of $\cT_F$
and its dynamical significance.
\begin{prop}\label{p:nonproper}
  Suppose $\deg F^n/\la_1^n$ is unbounded. 
  Then $\cT_F=\{\nu_*\}$ is
  a singleton, where $\nu_*$ is a rational pencil valuation,
  and $F^n_\bullet\to\nu_*$ on $\cV_1$ as $n\to\infty$.
  Moreover, $F$ is not proper, $\deg F^n\sim n\la_1^n$ and
  there exist affine coordinates in which 
  $F(x,y)=(P(x),A(x)y^{\la_1}+O_x(y^{\la_1-1}))$, where 
  $\deg P=\la_1$ and $\deg A\ge1$.
\end{prop}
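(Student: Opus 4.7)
The plan is to exploit the hypothesis $\deg F^n/\la_1^n\to\infty$ to locate the eigenvaluation $\nu_*$ from Proposition~\ref{prop:eigenval} precisely within $\cV_1$. Inside $\cV_1$ there are three regions: the interior where $\a>0$ and $A<0$; the rational-pencil boundary where $\a=0>A$; and the Jacobian boundary where $\a>0=A$. I would rule out the Jacobian boundary directly via Proposition~\ref{prop:jacobian}, which forces $JF$ to be constant---and a short computation (e.g.\ comparing $\la_2$ with the product of local degrees under the assumption $\la_2=\la_1^2$) shows such a map cannot exhibit unbounded $\deg F^n/\la_1^n$. The crux is ruling out the interior of $\cV_1$.

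To rule out the interior, I would argue that if $\a(\nu_*)>0$ then $Z_{\nu_*}\in\Ltwo(\fX)$ has strictly positive self-intersection and a suitable adaptation of the spectral argument of the appendix applies, even in the degenerate case $\la_2=\la_1^2$. Although $F_*$ now has only one eigenvalue at the spectral radius, Hodge-type orthogonality on the two-dimensional invariant subspace spanned by $Z_{\nu_*}$ and a pull-back eigenclass $\theta^*$ (with $F^*\theta^*=\la_1\theta^*$) still bounds $F^n_*\cL$ by a constant times $\la_1^n$ whenever $(Z_{\nu_*}\cdot Z_{\nu_*})>0$. This contradicts $\deg F^n=(F^n_*\cL\cdot\cL)$ being larger than any multiple of $\la_1^n$. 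Hence $\a(\nu_*)=0$, and combined with the rejection of the Jacobian boundary we conclude that $\nu_*$ is a rational pencil valuation.

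With $\nu_*$ a rational pencil valuation satisfying $F_*\nu_*=\la_1\nu_*$, Proposition~\ref{prop:skewproduct} supplies affine coordinates in which $F(x,y)=(P(x),Q(x,y))$ with $\deg_yQ=\la_1$. Writing $Q(x,y)=A(x)y^{\la_1}+O_x(y^{\la_1-1})$, the topological degree of a skew product is $\deg P\cdot\deg_yQ$, so the assumption $\la_2=\la_1^2$ forces $\deg P=\la_1$. A direct induction on $n$ describing the bidegree of the iterates then yields $\deg F^n=n(\deg A)\la_1^n+O(\la_1^n)$, so $\deg F^n/\la_1^n$ is unbounded iff $\deg A\ge1$, and in this case $\deg F^n\sim n\la_1^n$. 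Non-properness is then immediate: picking $x_0$ with $A(x_0)=0$, the restriction of $F$ to $\{x=x_0\}$ has image of $y$-degree strictly less than $\la_1$, so pulling back sufficiently large disks shows $F$ is not proper; equivalently, Proposition~\ref{prop:proper-crit} applied to a divisorial valuation sitting above $\nu_*$ produces the desired vanishing.

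The uniqueness $\cT_F=\{\nu_*\}$ together with the convergence $F^n_\bullet\to\nu_*$ on $\cV_1$ would follow from the tree dynamics of the skew product: the explicit form shows $F_\bullet$ is a strict contraction on each segment of $\cV_1$ whose upper end lies above $\nu_*$, while the pencil structure guarantees that the full tree $\cV_1$ retracts under iteration onto $\nu_*$, leaving no room for additional fixed points. The principal obstacle is the first step: substituting for the clean Hodge/spectral analysis available when $\la_2<\la_1^2$ a refined argument that still controls $\deg F^n$ in the absence of a spectral gap between eigenvalues. That is where I expect most of the technical work to lie.
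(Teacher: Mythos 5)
Your overall outline points in the right direction (show $\a(\nu_*)=0$, conclude $\nu_*$ is a rational pencil valuation, invoke Proposition~\ref{prop:skewproduct}, analyze the skew product), but the hardest parts are left as assertions and the part you flag as the ``principal obstacle'' is actually the easiest step.

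On ruling out $\a(\nu_*)>0$: no spectral gap or eigenclass $\theta^*$ is needed, and indeed the existence of such a $\theta^*$ with $F^*\theta^*=\la_1\theta^*$ is precisely what is \emph{not} available when $\la_2=\la_1^2$, so building a two-dimensional invariant subspace as you describe is itself problematic. The paper's argument is a one-liner: if $\nu\in\cT_F$ has $\a(\nu)>0$, then $F^*Z_\nu=\la_1Z_\nu$ (Lemma~\ref{l:TFstruc}) and $Z_\nu^2=\a(\nu)>0$, so for the nef class $\theta=F^n_*\cL$, the Hodge index theorem gives $\theta^2\le(\theta\cdot Z_\nu)^2/\a(\nu)$ and $(\theta\cdot\cL)^2\le\theta^2$, yielding $\deg F^n=(\theta\cdot\cL)\le\la_1^n/\sqrt{\a(\nu)}$, a contradiction. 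Once $\a(\nu_*)=0$ is forced, your separate treatment of the ``Jacobian boundary'' $\a>0=A$ via Proposition~\ref{prop:jacobian} is vacuous, since that locus already has $\a>0$.

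On non-properness: the skew-product computation is not conclusive as stated. Having a root $x_0$ of $A$ does not by itself prevent $F$ from being proper topologically (e.g.\ if $Q(x_0,\cdot)$ is still a nonconstant polynomial, every sequence $(x_0,y_n)$ with $y_n\to\infty$ still has image tending to infinity). One needs to track carefully what happens along sequences $x_n\to x_0$, $y_n\to\infty$, or---as the paper does---use that $d(F,\cdot)$ is nonconstant near $\nu_*$ to produce, via~\cite[Proposition~7.2]{eigenval}, a divisorial $\nu_0>\nu_*$ with $F_*Z_{\nu_0}$ proportional to $\cL$, then compute $(F_*Z_{\nu_0}\cdot Z_{\nu_*})=\la_1\a(\nu_0\wedge\nu_*)=0$ to get $F_*Z_{\nu_0}=0$; the valuative criterion (Proposition~\ref{prop:proper-crit}) then applies. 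Your appeal to Proposition~\ref{prop:proper-crit} presupposes exactly this vanishing, which you have not produced.

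On $\cT_F=\{\nu_*\}$ and the global convergence $F^n_\bullet\to\nu_*$ on $\cV_1$: ``the pencil structure guarantees that the full tree $\cV_1$ retracts under iteration onto $\nu_*$'' is not a proof. The paper derives the identity $\a(F^n_\bullet\nu\wedge\nu_*)=\frac{\la_1^n}{d(F^n,\nu)}\a(\nu\wedge\nu_*)$ from $F^*Z_{\nu_*}=\la_1Z_{\nu_*}$, which immediately gives convergence for $\a(\nu)>0$ (using $d(F^n,\nu)\ge\deg(F^n)\a(\nu)$), and then handles the boundary case $\a(\nu)=0$ by a separate Hodge-index argument showing $F^*Z_{\nu'}$ is proportional to $Z_\nu$ and pairing against the $\nu_0$ found above to force $\nu'=\nu_*$. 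None of this is replaced by the skew-product form; in fact the convergence statement is needed even for valuations far from the pencil. These are the genuinely technical steps, not the Hodge bound you single out.
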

\begin{prop}\label{p:boundeddeg}
  Suppose $\deg F^n$ is bounded.
  Then $F$ is a polynomial automorphism of $\C^2$.
  In suitable affine coordinates, either
  \begin{itemize}
  \item[(a)]
   $F$ is an affine map and $-\deg\in\cT_F$; or
  \item[(b)]
    $F$ is a skew product of the form 
    $F(x)=(ax+b,cy+P(x))$, where $a,c\in\C^*$, $b\in \C$;
    we may then assume $\cT_F=[\nu_0,\nu_1]$, 
    where $\nu_1$ is associated to the pencil $x=\mathrm{const}$
    and $\nu_0$ is a monomial valuation satisfying
    $\nu_0(y)=-1$, $\nu_0(x)=-1/q$, where $q=\deg P>1$.
  \end{itemize}
\end{prop}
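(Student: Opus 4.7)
Since $\deg F^n$ is bounded, $\la_1=\lim(\deg F^n)^{1/n}=1$, and combined with $\la_2=\la_1^2$ this forces $\la_2=1$, so $F$ is a birational polynomial self-map of $\C^2$. The substance of the proof is to upgrade this to a \emph{polynomial automorphism of $\C^2$}, after which the Friedland--Milnor classification theorem~\cite{FriedlandMilnor} supplies the listed normal forms; the automorphism step is what I expect to be the main obstacle.

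To prove $F$ is an automorphism I would first establish properness. Suppose on the contrary that $F$ is not proper: by Proposition~\ref{prop:proper-crit} there is a divisorial $\nu\in\hcVdiv$ with $d(F,\nu)=0$, i.e.\ a prime $E$ at infinity in some admissible compactification whose image under the lift $\tF$ meets $\C^2$. An argument parallel to the one underlying Proposition~\ref{p:nonproper} --- iterating the contraction of $E$ and tracking its contribution to $\tF^{n*}L_\infty$ --- then forces $\deg F^n\to\infty$, contradicting our boundedness hypothesis. A proper birational polynomial self-map of $\C^2$ is automatically an automorphism: the absence of any divisor at infinity sent into $\C^2$ by $\tF$ ensures that $F^{-1}$ has empty pole locus in $\C^2$, hence is polynomial.

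With $F$ an automorphism satisfying $\la_1=1$, Friedland--Milnor says that in suitable affine coordinates $F$ is either affine or elementary of the form $F(x,y)=(ax+b,\,cy+P(x))$, the generalized H\'enon case being excluded by $\la_1\ge 2$. The elementary case with $\deg P\le 1$ itself reduces to the affine one, so we arrive at cases (a) and (b) with $q\=\deg P>1$ in (b).

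It remains to identify $\cT_F$. In the affine case, $\deg(F^*L)=\deg L$ for any affine $L$, whence $F_*(-\deg)=-\deg$ and $-\deg\in\cT_F$. In case (b), I would parameterize monomial valuations by $\nu(x)=-s$, $\nu(y)=-t$ with $s,t\ge 0$ and normalization $\max(s,t)=1$. From $F^*x=ax+b$ and $F^*y=cy+P(x)$ with $\deg P=q>1$ one reads off that $F_*\nu$ is again monomial, with $F_*\nu(x)=-s$ and $F_*\nu(y)=-\max(t,qs)$, while a short computation gives $d(F,\nu)=1$. The fixed-point equation $F_\bullet\nu=\nu$ therefore reduces to $t\ge qs$, which after normalization cuts out exactly the segment with endpoints $\nu_1$ (the rational pencil valuation of $x=\mathrm{const}$, weights $(0,-1)$) and $\nu_0$ (the monomial valuation with $\nu(x)=-1/q$, $\nu(y)=-1$). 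To rule out additional fixed points in $\cV_1$ outside this segment I would invoke the tree-preserving property of $F_\bullet$ together with the explicit form of $F$, verifying that every non-monomial orbit is contracted toward $[\nu_0,\nu_1]$; this is a routine but nontrivial local analysis.
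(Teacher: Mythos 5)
Your outline matches the paper's: prove $\lambda_1=\lambda_2=1$, show $F$ is proper, deduce $F$ is an automorphism, then invoke Friedland--Milnor. The computation of $\cT_F$ in case~(b) is correct (modulo a small slip: $d(F,\nu)=\max\{t,qs\}$ for a monomial $\nu$ with weights $(s,t)$, which equals $1$ only once one restricts to the fixed locus $t=1\ge qs$; in general $d(F,-\deg)=q$). However, the crucial properness step is a hand-wave with a genuine gap.

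You assert that if $F$ is not proper then ``iterating the contraction of $E$ and tracking its contribution to $\tF^{n*}L_\infty$'' forces $\deg F^n\to\infty$, invoking an ``argument parallel to Proposition~\ref{p:nonproper}.'' But Proposition~\ref{p:nonproper} goes the other way (from unbounded degree to nonproperness), and there is no direct general mechanism by which a single contracted divisor at infinity forces degree growth; that is precisely the content that needs to be proved. The paper's argument is more structured: from $d(F,\nu_0)=0$ one gets $F_*Z_{\nu_0}=0$, and intersecting with $Z_\nu$ for $\nu\in\cT_F$ (using $F^*Z_\nu=Z_\nu$, which relies on Lemma~\ref{l:TFstruc}) yields $\a(\nu_0\wedge\nu)=0$, hence $\a(\nu)=0$ and $\nu\le\nu_0$. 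This identifies $\nu$ as a rational pencil valuation; after a coordinate change $\nu$ corresponds to the pencil $x=\mathrm{const}$, and Proposition~\ref{prop:skewproduct} with $\lambda=1$ forces $F=(ax+b,\,C(x)y+D(x))$. Nonproperness then forces $\deg C\ge1$, and a direct computation of $F^n$ shows $\deg F^n\ge n\deg C\to\infty$, giving the contradiction. This chain (intersection theory $\Rightarrow$ $\a(\nu)=0$ $\Rightarrow$ skew-product normal form $\Rightarrow$ explicit degree growth) is the missing content in your sketch; without it the properness claim is unsupported.
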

\begin{prop}\label{p:proper}
  Suppose $\deg F^n/\la_1^n$ is bounded and $\la_1>1$. 
  Then $F$ is proper. Moreover:
  \begin{itemize}
  \item[(a)]
    either $\cT_F$ consists of a single quasimonomial
    valuation $\nu_*\in\cV_1$ with $\a(\nu_*)>0$;
    or $\cT_F$ is a closed segment in $\cV_1$
    whose endpoints are divisorial valuations;
  \item[(b)]
    $\cT_{F^n}=\cT_{F^2}$ for $n\ge2$, 
    and either $\cT_F=\cT_{F^2}$ or $\cT_F$ is a singleton, 
    lying in the interior of $\cT_{F^2}$;
  \item[(c)]
    for $\nu\in\cV_1$, $F^{2n}_\bullet\nu\to r(\nu)$ as $n\to\infty$,
    where $r:\cV_1\to\cT_{F^2}$ is the natural retraction;
  \item[(d)]
    in suitable affine coordinates, all the 
    valuations in $\cT_{F^2}$ are monomial.
  \end{itemize}
\end{prop}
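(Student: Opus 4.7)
The strategy is to analyze the fixed-point set $\cT_F$ of $F_\bullet$ as a subtree of $\cV_1$, use the rigidity coming from $\la_2=\la_1^2$ to control its shape, and then extract properness, convergence and normal forms in that order.

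For properness, suppose $F$ is not proper. By Proposition~\ref{prop:proper-crit} there is a divisorial $\mu$ with $d(F,\mu)=0$; since $d(F,\cdot)>0$ on $\cV_1$ by Proposition~\ref{prop:V1invariant}, $\mu$ lies outside $\cV_1$. I would trace through the geometry at infinity around the curve contracted by $F$ (the support of $\mu$) to produce an eigenvaluation in $\cT_F$ that is a rational pencil valuation; Proposition~\ref{prop:skewproduct} would then yield a skew-product normal form, and Proposition~\ref{p:nonproper} would force $\deg F^n\sim n\la_1^n$, contradicting boundedness of $\deg F^n/\la_1^n$. This is the first and probably most subtle sticking point, since identifying a rational pencil from nonproperness alone is delicate.

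For~(a), $\cT_F$ is closed by continuity and connected because $F_\bullet$ preserves the tree structure on $\cV_1$; hence it is a subtree. The decisive use of $\la_2=\la_1^2$ is to forbid branch points: the local multipliers of $F_\bullet$ on tangent directions (Möbius derivatives in the skewness parameter, computed as in~\cite[Theorem~7.4]{eigenval}) at three or more fixed tangent directions at a single fixed point would multiply to exceed the topological bound $\la_2$, a contradiction. Therefore $\cT_F$ is either a point or a closed segment. A singleton $\nu_*$ cannot be an end of $\cV_1$, since curve and infinitely singular fixed valuations produce growth incompatible with the bound on $\deg F^n/\la_1^n$; and $\a(\nu_*)=0$ is excluded by the properness step, so $\nu_*$ is quasimonomial with $\a(\nu_*)>0$. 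Endpoints of a fixed segment must be divisorial, because irrational quasimonomial valuations do not support the required Möbius fixed data with integer coefficients.

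For~(b) and~(c), on any segment through a point of $\cT_F$ parametrized by skewness, $F_\bullet$ acts by a Möbius map with nonnegative integer coefficients. Either $F_\bullet^2$ restricts to the identity on a tangent direction---so that direction extends a fixed segment---or its multiplier is strictly greater than~$1$ and it contracts transverse segments toward $\cT_{F^2}$. This immediately yields~(c): every $\nu\in\cV_1$ is pushed by $F_\bullet^{2n}$ onto its retraction in $\cT_{F^2}$. For the stabilization in~(b), any hypothetical fixed point of $F_\bullet^n$ outside $\cT_{F^2}$ would force extra branching or an additional fixed segment, both forbidden by~(a) applied to $F^n$; and if $\cT_F\subsetneq\cT_{F^2}$, then $F_\bullet$ must reverse orientation across a tangent direction at the unique fixed point, forcing $\cT_F$ to be a singleton in the interior of $\cT_{F^2}$. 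Finally, for~(d), the divisorial endpoints of $\cT_{F^2}$ correspond to pencils on $\C^2$ which, by the Line Embedding Theorem in the same spirit as Proposition~\ref{prop:skewproduct}, can be straightened simultaneously to coordinate pencils; every quasimonomial valuation on the connecting segment is then monomial in the resulting affine coordinates.
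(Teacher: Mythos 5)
Your overall outline matches the paper's in spirit, but several steps have genuine gaps or rely on claims that do not actually hold.

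\textbf{Properness.} You propose to show non-properness produces a rational pencil valuation in $\cT_F$, then invoke Proposition~\ref{p:nonproper} to conclude $\deg F^n\sim n\la_1^n$. This is circular: Proposition~\ref{p:nonproper} \emph{assumes} $\deg F^n/\la_1^n$ is unbounded; it does not let you deduce that growth from having a pencil eigenvaluation. (Indeed, a proper skew product $F(x,y)=(P(x),Q(x,y))$ with $\deg P=\deg_yQ=\la_1$ has a rational pencil eigenvaluation but $\deg F^n\sim\la_1^n$.) The paper's actual route is through intersection theory: if $F$ is not proper, Proposition~\ref{prop:proper-crit} gives a divisorial $\nu_0$ with $F_*Z_{\nu_0}=0$; then for every $\nu\in\cT_F$, $0=(F_*Z_{\nu_0}\cdot Z_\nu)=\la_1\a(\nu_0\wedge\nu)$, forcing $\a(\nu)=0$, hence $\nu$ is a rational pencil; by Proposition~\ref{prop:skewproduct} and $\la_2=\la_1^2$, $F$ is a skew product with nonconstant leading $y$-coefficient, and a direct degree computation contradicts the boundedness hypothesis. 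You also need to make explicit the passage from the contracted curve to $Z_{\nu_0}=0$ via the Weil-class formalism; ``tracing through the geometry'' is not a proof.

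\textbf{No branch points.} Your argument that ``local multipliers multiply to exceed $\la_2$'' is not the right invariant and is not obviously correct. The paper's argument is cleaner: at a branch point $\nu$ of $\cT_F$ one has $\a(\nu)>0$, so $\nu$ is divisorial, the tangent map is a rational self-map of $\P^1$ of degree $\la_2/d(F,\nu)=\la_1>1$, and such a map has at most two totally invariant points. Since each branch of $\cT_F$ at $\nu$ determines a totally invariant tangent direction (by total invariance of $\cT_F$, which itself needs the properness already established), three or more branches are impossible.

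\textbf{Endpoints divisorial.} Your assertion that irrational valuations ``do not support the required M\"obius fixed data with integer coefficients'' is not an argument. The paper instead applies the Jacobian formula~\eqref{e:jacobian} on totally ordered subsegments of $\cT_F$: $\nu\mapsto\nu(JF)$ is both piecewise affine with slope $m(\nu)(\la_1-1)$ and concave, while $m$ is nondecreasing, so $m$ is constant; this rules out infinitely singular endpoints (infinite multiplicity).

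\textbf{Part (c).} Your M\"obius-contraction heuristic covers the generic case, but misses the two hard points the paper handles: when the tangent direction $\vv$ at $r(\nu)$ is not preperiodic under the tangent map, one has to compare thinness functions via the Jacobian formula; and when $\a(\nu)=0$, one has to first show $\a(F_\bullet^n\nu)>0$ for some $n$ (again via $F^*Z$ computations and Lemma~\ref{l:tanvec0}), and then use the explicit equicontinuity estimate of Lemma~\ref{l:equicont} to close the argument.

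\textbf{Part (d).} This is the most serious gap. The endpoints of $\cT_{F^2}$ are divisorial valuations in $\cV_1$ with $\a>0$ in general, not rational pencil valuations, so they do not ``correspond to pencils on $\C^2$'' and you cannot apply the Line Embedding Theorem to them. The paper's argument is considerably more involved: it first uses the Jacobian formula and the structure of $\cT_{F^2}$ to show $A(\nu_*)+m(\nu_*)\a(\nu_*)<0$ for the minimal element $\nu_*\in\cT_{F^2}$, which by Lemma~\ref{lem:moncrit} makes $\nu_*$ monomializable; then a delicate degree argument forces the relevant slope parameter $p$ to equal $1$; and finally a finite sequence of shears $(x,y)\mapsto(x,y+ax^q)$ together with an induction on $q$ straightens the whole segment.

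In short, the skeleton is similar, but (i) the properness step is circular as written, (ii) the branch-point exclusion is by the wrong invariant, (iii) the Jacobian formula~\eqref{e:jacobian}---the workhorse of the paper's tree analysis---does not appear in your proposal at all, and (iv) part (d) rests on a false premise about the endpoints of $\cT_{F^2}$.
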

The convergence in (c) holds in a 
strong sense:  $F^{2n}_\bullet\nu\to r(\nu)$ weakly and
$A(F^{2n}_\bullet\nu)\to A(r(\nu))$. 

The proofs of these results are given in Section~\ref{sec:treeproofs}.
%
%
\subsection{Proof of Theorem~C}
If $\deg(F^n)/\la_1^n$ is unbounded, then we are in case (1) 
by Proposition~\ref{p:nonproper}.

If $\deg(F^n)$ is bounded, then we pick suitable affine
coordinates as in Proposition~\ref{p:boundeddeg}. 
When $F$ is affine, it extends holomorphically to $X=\P^2$.
When $F$ is a skew product as in~(b), it extends
holomorphically to the Hirzebruch surface $X=\F_q$.
Indeed, we can view $\F_q$ as a toric surface
associated to the complete fan generated by the vectors
$(1,0)$, $(0,1)$, $(0,-1)$ and $(-q,-1)$ in $\R^2$,
see~\cite[pp.6--8]{fulton}.
Then $X$ is a compactification of $\C^2$ and
$X\setminus\C^2$ is a union of two rational curves, 
corresponding to the centers of $\nu_0$ and $\nu_1$ on $X$.
As $\nu_0$ and $\nu_1$ are totally invariant under $F_\bullet$, 
the lift of $F$ to $\F_q$ is holomorphic. 

Finally, when $\deg(F^n)/\la_1^n$ is bounded but $\la_1>1$,
we apply Proposition~\ref{p:proper}.
Hence we may assume that $\cT_{F^2}$ consists of monomial 
valuations. If $\cT_F$ contains a divisorial 
valuation $\nu_*$, then 
$\nu_*(x)=-p/q$, $\nu_*(y)=-1$, where $q\ge p\ge1$ 
and $\gcd(p,q)=1$. 
Let $X\= X_{p,q}$ be the toric surface
associated to the complete fan generated by the vectors
$(1,0)$, $(0,1)$ and $(-p,-q)$ in $\R^2$.  
Then $X$ has at worst quotient singularities and is 
in fact a weighted projective plane, see below.
Note that
$X\setminus\C^2$ is a single, totally invariant, 
rational curve.
As $\nu_*$ is totally invariant under $F_\bullet$,
$F$ lifts to a holomorphic selfmap of $X$.

If $\cT_F$ contains no divisorial valuation, then 
$\cT_F=\{\nu_*\}$, where $\nu_*$ is an irrational 
valuation belonging to the interior of $\cT_{F^2}$.
Note that $d(F,\cdot)$ cannot be locally constant
at $\nu_*$, or else the tangent map of $F$ at $\nu_*$
(see below)
would be the identity and $\cT_F=\cT_{F^2}$.
Thus $\la_1=d(F,\nu_*)$ is irrational.

Pick any divisorial valuation $\nu\in\cT_{F^2}$ with $\nu<\nu_*$,
and set $\nu'=F_\bullet\nu$. 
Then $\nu'>\nu_*$ and $\nu$, $\nu'$ are both totally invariant 
under $F^2_\bullet$.
We have $\nu(y)=\nu'(y)=-1$, $\nu(x)=-p/q$, $\nu'(x)=-p'/q'$ for some
integers with $q\ge p\ge1$, $q'>p'\ge1$ and
$\gcd(p,q)=\gcd(p',q')=1$.
Define $X$ to be the toric surface associated to the complete
fan generated by $(1,0)$, $(0,1)$, $(-p,-q)$, $(-p',-q')$ in $\R^2$.
Then $X$ has at worst quotient singularities, and
$X\setminus \C^2$ consists of two irreducible 
rational curves $E,E'$ corresponding to the centers 
of $\nu$ and $\nu'$ on $X$. As these valuations are permuted 
by $F_\bullet$ and 
totally invariant by $F^2_\bullet$, $F$ lifts to a 
holomorphic selfmap of $X$ which permutes $E$ and $E'$.
This completes the proof of Theorem~C.
%
%
\subsection{Weighted projective spaces and normal forms}\label{subsecl12l2}
Suppose $\la_2=\la_1^2$, $\deg F^n/\la_1^n$ is bounded and $\la_1>1$.

Assume that $\cT_F$ contains a divisorial valuation 
$\nu_*$, with $\nu_*(x)=-p/q$,  $\nu_*(y)=-1$, where $q\ge p\ge1$ 
are relatively prime integers.
We saw that $F$ is holomorphic on the toric surface $X_{p,q}$. 
Conversely any polynomial map of $\C^2$ which extends as a 
holomorphic map to $X_{p,q}$ satisfies  $\la_2=\la_1^2$, 
and $\deg F^n/\la_1^n$ is bounded. 
The surface $X_{p,q}$ is the weighted projective space with 
homogeneous coordinates $[x:y:z] \sim [\la^p x : \la^q y :\la  z]$ 
for all $\la\in\C^*$, see~\cite[p.35]{fulton},~\cite{dolgachev} or~\cite{cox}.
For any polynomial $P$, let $P_+$ be its $\nu_*$-leading 
homogenous part, \ie the sum of all monomials
$a_{ij}x^iy^j$ in $P$ such that $-(pi/q+j)=\nu_*(P)$. Then 
a polynomial map $ F = (P,Q)$ is holomorphic on $X_{p,q}$ 
iff $P_+$ and $Q_+$ have no common zeroes on the 
weighted projective line
$[x:y] \sim [\la^p x : \la^q y]$, \ie iff $P_+(x^q,y^p)$ 
and $Q_+(x^q,y^p)$ have no common zeroes in 
$\C^2\setminus \{ 0\}$.
 
Note that there exist polynomial maps of $\C^2$ which extend 
to a unique $X_{p,q}$. 
For such an example, pick any $\la_1$ divisible by $p$ and $q$, 
write $\la_1 = pa = qb$ and take
$F(x,y)=(P,Q)$ with 
$P_+=\a x^{\la_1}+\b y^{pb}$, 
$Q_+=\g x^{qa}+\d y^{\la_1}$, where $\a\b\g\d \neq 0$. 
When $\la_1$ is not divisible by $\lcm(p,q)$, 
$\cT_F$ is not reduced to a singleton.
 
Pick $p,q$ and $p',q'$ any two pairs of relatively prime integers 
with associated monomial valuations $\nu$ 
and $\nu'$ and $p'/q' > p/q$. Then there exists a polynomial 
map of $\C^2$ for which $\cT_F$ is precisely the segment of 
monomial valuations  $[\nu,\nu']$. Take $\la_1$ divisible by $\lcm(p,q,p',q')$, 
write $\la_1 = pa = qb= p'a'=q'b'$ 
and define 
$F(x,y)=(\a x^{\la_1}+\b y^{pb}+C_0,\d y^{\la_1}+\g x^{q'a'}+ C_1)$  
with $\a\b\g\d \neq 0$, and $C_0,C_1\in \C$.
 
Finally if $\cT_F$ contains no divisorial valuation, then $\la_1\not\in\N$
and $\cT_F$ consists of a single irrational monomial 
valuation $\nu_*$.
We may assume
$\nu_*(x)=-t$, $\nu_*(y)=-1$, where $t\in (0,1)$ is
irrational. This leads to
\begin{equation*}
  P_+=\a y^b
  \quad\text{and}\quad
  Q_+=\b x^c
\end{equation*}
where $b,c\in\N$, $0<b<c$, 
$bc=\la_2=\la_1^2$, $t=\sqrt{b/c}$ 
and $\a,\b\in\C^*$.

%
%
\subsection{Proofs of Propositions~\ref{p:nonproper}-\ref{p:proper}}\label{sec:treeproofs}
The arguments utilize the tree structure of $\cV_1$ 
in much more detail than other parts of this paper.
In particular, we need to exploit the relationship between the 
parameterizations $\a$ and $A$ on the tree $\cV_0$ as explained
in~\cite[Appendix~A]{eigenval}. 
There is an increasing, lower semicontinuous 
\emph{multiplicity} function 
$m:\cV_0\to\N^*\cup\{+\infty\}$ 
such that $A(\nu) = -2 -\int_{-\deg}^\nu m(\mu)d\a(\mu)$ 
for all $\nu\in\cV_0$, see~\cite[Theorem~A.4]{eigenval}. 
The multiplicity of any quasimonomial valuation is finite,
whereas infinitely singular valuations have infinite
multiplicity.

Write $JF$ for the Jacobian determinant of $F$. 
The multiplicity function will be primarily exploited through the
following \emph{Jacobian formula}, 
see~\cite[Lemma~7.6]{eigenval}
\begin{equation}\label{e:jacobian}
  A(\nu) + \nu(JF) = d(F,\nu)A(F_\bullet\nu).
\end{equation}
We start by proving some general facts about the set 
$\cT_F$ defined in~\eqref{e:TF}.
\begin{lem}\label{l:TFstruc}
  The set $\cT_F$ is nonempty. 
  For every $\nu\in\cT_F$, $F_*\nu=\la_1\nu$ and
  $F^*Z_\nu=F_*Z_\nu=\la_1Z_\nu$. 
  If $F$ is proper, every $\nu\in\cT_F$ is 
  totally invariant under $F_\bullet$.
\end{lem}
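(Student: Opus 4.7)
The lemma comprises three claims: nonemptiness of $\cT_F$; the eigenvalue identities $F_*\nu=\la_1\nu$ and $F^*Z_\nu=F_*Z_\nu=\la_1 Z_\nu$ for $\nu\in\cT_F$; and total invariance of such a $\nu$ under $F_\bullet$ when $F$ is proper. For nonemptiness, Proposition~\ref{prop:eigenval} supplies an eigenvaluation $\nu_*\in\cV_1$ with $F_*\nu_*=\la_1\nu_*$ and $d(F,\nu_*)=\la_1>0$, whence $F_\bullet\nu_*=\nu_*$ and $\nu_*\in\cT_F$.

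For the eigenvalue identities, fix $\nu\in\cT_F$ and put $\delta:=d(F,\nu)$, positive by Proposition~\ref{prop:V1invariant}. Tautologically $F_*\nu=\delta\nu$, and iterating gives $d(F^n,\nu)=\delta^n$; since $\nu\ge-\deg$ in the partial order on $\cV_0$, one has $\delta^n=d(F^n,\nu)\le\deg F^n$ and hence $\delta\le\la_1$. To obtain the reverse inequality, I would pass to the Weil class $Z_\nu\in\Ltwo(\fX)$, nef because $\a(\nu)\ge0$, and exploit $F_*Z_\nu=Z_{F_*\nu}=\delta Z_\nu$. Pairing with $\theta^*$ gives
\[
\delta\,(Z_\nu\cdot\theta^*)=(F_*Z_\nu\cdot\theta^*)=(Z_\nu\cdot F^*\theta^*)=\la_1\,(Z_\nu\cdot\theta^*),
\]
so that $\delta=\la_1$ whenever $(Z_\nu\cdot\theta^*)>0$. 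The degenerate subcase $(Z_\nu\cdot\theta^*)=0$ is handled by the Hodge-index/Perron--Frobenius structure on the nef cone of $\Ltwo(\fX)$ from the appendix, which forces $Z_\nu$ to be proportional to $\theta_*$, and then $F_*\theta_*=\la_1\theta_*$ again yields $\delta=\la_1$. The remaining identity $F^*Z_\nu=\la_1 Z_\nu$ then follows by combining $F_*Z_\nu=\la_1 Z_\nu$ with the projection-type formula $F^*F_*=\la_2\cdot\id$ on the relevant Weil classes and the standing hypothesis $\la_2=\la_1^2$.

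For total invariance, suppose $F$ is proper and $F_\bullet\mu=\nu$. Applying $F^*$ to the identity $F_*Z_\mu=d(F,\mu)Z_\nu$, and using $F^*F_*=\la_2\cdot\id$ (available thanks to properness) together with the already established $F^*Z_\nu=\la_1 Z_\nu$, produces
\[
\la_2\,Z_\mu=F^*F_*Z_\mu=d(F,\mu)\,F^*Z_\nu=d(F,\mu)\,\la_1\,Z_\nu.
\]
Intersecting with $\cL$ and invoking the normalizations $(Z_\mu\cdot\cL)=(Z_\nu\cdot\cL)=1$ together with $\la_2=\la_1^2$ forces $d(F,\mu)=\la_1$ and $Z_\mu=Z_\nu$, so $\mu=\nu$.

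The main obstacle is the lower bound $\delta\ge\la_1$ in the degenerate regime $(Z_\nu\cdot\theta^*)=0$: closing this case genuinely requires the Perron--Frobenius/Hodge-index structure of $F_*$ on the nef cone in $\Ltwo(\fX)$ that the appendix adapts from~\cite{deggrowth}. Once that piece is in place, both the $F^*$-identity in part~(b) and the push-forward argument in part~(c) are driven cleanly by the relation $F^*F_*=\la_2\cdot\id$ combined with $\la_2=\la_1^2$.
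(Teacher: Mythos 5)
Your proposal runs into a genuine obstruction at the central step: you reduce the equality $d(F,\nu)=\la_1$ to a pairing against the eigenclass $\theta^*$ from Theorem~\ref{thm:spectral}, and you handle the degenerate case $(Z_\nu\cdot\theta^*)=0$ via the Perron--Frobenius structure of $F_*$ on the nef cone. But Theorem~\ref{thm:spectral} carries the explicit hypothesis $\la_2<\la_1^2$, whereas Lemma~\ref{l:TFstruc} sits inside Section~\ref{maximum}, where the standing assumption is $\la_2=\la_1^2$. The paper points this out at the very start of that section: ``we do not have an analog of Theorem~\ref{thm:spectral} at our disposal.'' So the classes $\theta^*,\theta_*$ and the spectral gap you lean on simply do not exist here, and the lower bound $d(F,\nu)\ge\la_1$ is not established by your argument.

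There is also a repeated algebraic slip: you invoke ``$F^*F_*=\la_2\cdot\id$'' both for the identity $F^*Z_\nu=\la_1 Z_\nu$ and for total invariance. The correct identity from the appendix is $F_*F^*=\la_2\cdot\id$; the composition in the other order is a ``sum over $F_\bullet$-preimages'' operator and is \emph{not} a scalar. In the total-invariance step this swap makes the argument circular: the line $\la_2 Z_\mu=F^*F_*Z_\mu$ is equivalent to $\mu$ being the unique $F_\bullet$-preimage of $\nu$, which is precisely what you are trying to prove. The paper avoids both problems by a different route: from Proposition~\ref{prop:eigenval} it first obtains a single $\nu\in\cV_1$ with $F_*\nu=\la_1\nu$, gets $F_*Z_\nu=\la_1Z_\nu$ via Lemma~\ref{lem:pushpush}, and deduces $F^*Z_\nu=\la_1Z_\nu$ from $\la_2=\la_1^2$ together with the Hodge Index Theorem (computing $(F^*Z_\nu-\la_1Z_\nu)^2$ and its pairing with $Z_\nu$ both vanish); total invariance then follows directly from the decomposition $F^*Z_\nu=\sum a(\mu)Z_\mu$ in Proposition~\ref{prop:pull}. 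For an \emph{arbitrary} $\mu\in\cT_F$, the eigenvalue $\la$ of $\mu$ is pinned to $\la_1$ by pairing $F_*Z_\mu$ with $Z_\nu$ and using~\eqref{e:wedge}, not by any spectral theorem. If you want to keep the spirit of your argument, replace the $\theta^*$ pairing with this pairing against a known eigenvaluation, and replace the appeal to $F^*F_*$ by the Hodge Index computation and Proposition~\ref{prop:pull}.
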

\begin{proof}
  By Proposition~\ref{prop:eigenval} there exists $\nu\in\cV_1$
  with $F_*\nu=\la_1\nu$, hence $\cT_F$ is nonempty. For any such 
  $\nu$, we have $F_*Z_\nu=\la_1Z_\nu$ by Lemma~\ref{lem:pushpush}.
  The condition $\la_2=\la_1^2$ and the Hodge Index Theorem then 
  imply $F^*Z_\nu=\la_1Z_\nu$. When $F$ is proper, 
  the latter equation 
  implies that $\nu$ is totally invariant by
  Proposition~\ref{prop:pull}.

  Now pick any $\mu\in\cT_F$. We must prove that $F_*\mu=\la_1\mu$.
  In any case, $F_*\mu=\la\mu$ for some $\la>0$.
  Pick $\nu\in\cV_1$ such that $F_*\nu=\la_1\nu$.
  We may assume $\nu\ne\mu$.
  By what precedes and by~\eqref{e:wedge},
  $\la_1\a(\mu\wedge\nu)=(Z_\mu\cdot F^*Z_\nu)
  =(F_*Z_\mu\cdot Z_\nu)=\la\a(\mu\wedge\nu)$.
  Since $\mu\ne\nu$, $\a(\mu\wedge\nu)>0$ and so $\la=\la_1$.
\end{proof}

\begin{proof}[Proof of Proposition~\ref{p:nonproper}]
  Every valuation $\nu\in\cT_F$ must have $\a(\nu)=0$,
  or else $\deg(F^n)/\la_1^n$ would be bounded by $1/\a(\nu)$.
  By Theorem~B' and~\S7.4 in~\cite{eigenval}, 
  there exists a rational pencil valuation $\nu_*\in\cT_F$. 
  Moreover, in suitable affine coordinates, $\nu_*$ 
  corresponds to the pencil $x=\mathrm{const}$ and
  $F$ takes the required form. 
  Hence $\deg F^n\sim n\la_1^n$.

  Now note that since $F^*Z_{\nu_*}=\la_1Z_{\nu_*}$, we have
  \begin{equation}\label{e:skew}
    \a(F^n_\bullet\nu\wedge\nu_*)
    =\frac{\la_1^n}{d(F^n,\nu)}\a(\nu\wedge\nu_*)
  \end{equation}
  for any $\nu\in\cV_1$ and $n\ge1$. 

  Apply this to $n=1$ and $\nu<\nu_*$ close to $\nu_*$. 
  Then $F_\bullet\nu<\nu_*$, so $d(F,\nu)<\la_1$
  or else $\nu\in\cT_F$. 
  Hence $d(F,\cdot)$ is nonconstant near $\nu_*$. 
  By~\cite[Proposition~7.2]{eigenval} there exists 
  $\nu_0\in\cV_0$, $\nu_0>\nu_*$ such that 
  $F_*Z_{\nu_0}=c\cL$ where $c\ge0$ and $\cL$ is the class
  of a line. Then
  $c=(F_*Z_{\nu_0}\cdot Z_{\nu_*})=\la_1\a(\nu_0\wedge\nu_*)=0$.
  In particular, $F$ is not proper, 
  see Proposition~\ref{prop:proper-crit}. 

  It remains to prove that $F^n_\bullet\nu\to\nu_*$ as
  $n\to\infty$ for every $\nu\in\cV_1$. This will in particular imply 
  $\cT_{F^n}=\{\nu_*\}$ for all $n\ge1$.
  When $\a(\nu)>0$, this follows from~\eqref{e:skew},
  since $d(F^n,\nu)\ge\deg(F^n)\a(\nu)$, so
  suppose $\a(\nu)=0$ and set $\nu'=F_\bullet\nu$.
  If $\a(\nu')>0$, then again $F^n_\bullet\nu\to\nu_*$, 
  so suppose $\a(\nu')=0$. 
  Consider the nef Weil class $Z_{\nu'}$ 
  and note that $(F^*Z_{\nu'}\cdot Z_\nu)=0$.
  By the Hodge Index Theorem, $F^*Z_{\nu'}=c Z_\nu$,
  where $c=\la_2/d(F,\nu)>0$. 
  Let $\nu_0>\nu_*$ be the valuation with $F_*Z_{\nu_0}=0$ 
  considered above.
  Then 
  $0=(F_*Z_{\nu_0}\cdot Z_{\nu'})
  =(Z_{\nu_0}\cdot F^*Z_{\nu'})=c\a(\nu_0\wedge\nu')$,
  which implies $\nu'=\nu_*$, 
  completing the proof.
\end{proof}
\begin{proof}[Proof of Proposition~\ref{p:boundeddeg}]
  Note that $\la_1=\la_2=1$.
  It suffices to prove that $F$ is proper. Indeed,
  then $F$ is a polynomial automorphism, and all 
  the assertions follow from the 
  Friedland-Milnor classification~\cite{FriedlandMilnor}.

  Now, if $F$ were not proper, 
  by Proposition~\ref{prop:proper-crit} we could find 
  a divisorial valuation $\nu_0\in\cV_0$ such that
  $F_*Z_{\nu_0}=0$. Hence, for any $\nu\in\cT_F$, 
  $0=(F_*Z_{\nu_0}\cdot Z_\nu)=\la_1\a(\nu_0\wedge\nu)$,
  so that $\a(\nu)=0$ and $\nu_0\ge\nu$. Thus $F$ would be of the form
  $(ax+b, C(x) y + D(x))$ in suitable coordinates. 
  As $F$ is nonproper, 
  $\deg C \ge 1$, contradicting that $\deg F^n$ is bounded.
\end{proof}

\medskip
Next we turn to Proposition~\ref{p:proper}, 
which is significantly harder to prove than the previous 
two propositions.
Assume therefore, for the rest of Section~\ref{sec:treeproofs} 
that $\la_2=\la_1^2>1$ and that $\deg F^n/\la_1^n$ is bounded. 
Then $F$ is proper as follows from the proof of Proposition~\ref{p:boundeddeg}.
Hence, by Lemma~\ref{l:TFstruc},
every $\nu\in\cT_F$ is totally invariant under $F_\bullet$.

To continue the proof, we 
recall the definition of the (tree) tangent map of $F$ at
any valuation $\nu\in\cV_0$, see~\cite[\S3]{eigenval}. 
Declare two segments of the form 
$[\nu_1,\nu[\,$ and $[\nu_2,\nu[\,$ to be equivalent iff
they have nonempty intersection. An equivalence class is called 
a \emph{tangent vector} at $\nu$ and the set $T\nu$ 
of tangent vectors the \emph{tangent space} at $\nu$. 
If $\vv$ is a tangent vector, we denote by $U(\vv)$ the open set of all
valuations determining $\vv$. These open sets form a basis for the weak topology 
on $\cV_0$~\cite[Theorem~5.1]{valtree}.
As $F_\bullet$ preserves the tree structure it naturally induces a 
surjective selfmap $\mathsf{F}:T\nu\to T\nu$, the \emph{tangent map},
for any eigenvaluation $\nu\in\cT_F$.

When $\nu\in\cT_F$ is infinitely singular, $T\nu$ is a singleton, so
$\mathsf{F}\equiv\id$. 
When $\nu\in\cT_F$ is irrational, $T\nu$ consists of two 
tangent vectors and $\mathsf{F}^2\equiv\id$.
If instead $\nu\in\cT_F$ is divisorial and 
$X$ is an admissible compactification for which
the center of $\nu$ is a prime $E$ of $X$, then there exists 
a canonical identification  of $E$ with the tangent space $T\nu$
at $\nu$ as follows. 
For any point $p\in E$, all valuations centered at $p$ 
determine the same tangent vector $\vv_p$ at $\nu$.
Conversely all valuations in $U(\vv)$ are centered along a connected subspace
intersecting $E$ in a single point $p(\vv)$, see~\cite[Theorem~B.1]{valtree}.
With this identification, $\mathsf{F}$ can be viewed as a 
rational selfmap of $E\simeq\P^1$.
\begin{lem}\label{l:tanvec}
  Assume $\nu_*\in\cT_F$ and consider a tangent vector $\vv$ 
  at $\nu_*$ represented by a valuation with $\a>0$. 
  If $\vv$ is totally invariant by $\mathsf{F}$, then 
  $F_\bullet\equiv\id$  on a small segment representing $\vv$.
\end{lem}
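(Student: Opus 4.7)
The plan is to exploit the M\"obius normal form for $F_\bullet$ on small segments of $\cV_1$, provided by \cite[Theorem~7.4]{eigenval}. On a small closed segment $I\subset\cV_1$ with $\nu_*$ as an endpoint and representing $\vv$, $F_\bullet$ acts in the skewness coordinate as a M\"obius transformation $M$ with non-negative integer coefficients, fixing $\a_*\=\a(\nu_*)$ and (by the invariance of $\vv$ under $\mathsf F$) mapping a neighborhood of $\a_*$ in $I$ into itself. I would then prove $M\equiv\id$ by establishing separately (i) $M'(\a_*)=1$ and (ii) that $M$ admits a second fixed point distinct from $\a_*$; together these rule out the parabolic alternative and force $M=\id$.

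For (i), the starting point is $F^*Z_{\nu_*}=\la_1 Z_{\nu_*}$ (from $\la_2=\la_1^2$ and Lemma~\ref{l:TFstruc}), which after pairing with $Z_\mu$ yields $d(F,\mu)\,\a(F_\bullet\mu\wedge\nu_*)=\la_1\a(\mu\wedge\nu_*)$. Combined with total invariance of $\vv$ and the $\a>0$ hypothesis, this pins down $d(F,\mu)=\la_1$ throughout a small neighborhood of $\nu_*$ in $I$. Inserting this into the Jacobian formula~\eqref{e:jacobian}, and noting that both $A(\mu)$ and $\mu(JF)$ are affine in $\a(\mu)$ along $I$ with $\vv$-slopes governed by the tree multiplicity $m_\vv$ of $\vv$ and a quantity $k_{JF}$ respectively, one computes $M'(\a_*)=(m_\vv+k_{JF})/(\la_1 m_\vv)$. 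The key slope identity $k_{JF}=(\la_1-1)m_\vv$ follows from the structure of the tangent map: total invariance combined with $\la_2=\la_1^2$ forces $\mathsf F$ to have total degree $\la_1$ with $p(\vv)$ totally ramified of local degree $\la_1$, and a direct local computation of $JF$ near $p(\vv)$, using the resulting normal form of $\tilde F$, yields the slope. The irrational and infinitely singular cases rely on the analogous monomial and rigid local normal forms of \cite{F-rigid}.

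For (ii), in the divisorial case I would blow up $p(\vv)$ to produce a divisorial valuation $\mu\in I$ close to $\nu_*$, and verify $F_\bullet\mu=\mu$ by a direct valuative computation using the local form of $\tilde F$ at $p(\vv)$; combined with $M'(\a_*)=1$ this second fixed point excludes the parabolic alternative. The irrational case admits a purely algebraic proof, since the only M\"obius transformation with non-negative integer coefficients fixing an irrational $\a_*$ with derivative $1$ is the identity. The infinitely singular case is handled by a limiting argument on divisorial approximations to $\nu_*$. The main technical obstacle is the slope identity $k_{JF}=(\la_1-1)m_\vv$ in step (i), which requires delicate local analysis of $\tilde F$ at the totally ramified point $p(\vv)$, potentially after resolving indeterminacies.
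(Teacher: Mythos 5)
Your proposal takes a genuinely different route from the paper, but it has real gaps that would require substantial new work to close. The paper's proof is a short intersection-theoretic argument: total invariance of $\vv$, combined with properness of $F$ (established just before Lemma~\ref{l:tanvec}) and Proposition~\ref{prop:pull}, shows that a $\nu$ close to $\nu_*$ on the segment is the \emph{unique} preimage of $\nu'\=F_\bullet\nu$, hence $F^*Z_{\nu'}=\tfrac{\la_2}{d}Z_\nu$ with $d=d(F,\nu)$. The two pairings $(F^*Z_{\nu'}\cdot F^*Z_{\nu'})=\la_2(Z_{\nu'}\cdot Z_{\nu'})$ and $(F^*Z_{\nu'}\cdot Z_{\nu_*})=(Z_{\nu'}\cdot F_*Z_{\nu_*})$, analyzed separately when $\nu,\nu'<\nu_*$ and when $\nu,\nu'>\nu_*$, together force $d=\la_1$ and $\a(\nu')=\a(\nu)$, hence $\nu'=\nu$. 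No Jacobian formula, no M\"obius normal form, no local models.

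Concrete problems with your plan. First, the claim in step (i) that the pairing $d(F,\mu)\,\a(F_\bullet\mu\wedge\nu_*)=\la_1\a(\mu\wedge\nu_*)$ ``pins down $d(F,\mu)=\la_1$ throughout a small neighborhood of $\nu_*$ in $I$'' only follows when $I$ points away from the root, for then $\mu\wedge\nu_*=F_\bullet\mu\wedge\nu_*=\nu_*$ and $\a(\nu_*)>0$ cancels. But the lemma must also handle the tangent direction toward $-\deg$ (this is essential in the proof of Proposition~\ref{p:proper}, e.g.\ when $\nu_*$ is infinitely singular and $T\nu_*$ is a singleton, or for the branch of $\cT_F$ toward the root at a branch point). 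In that case the same pairing only yields $d(F,\mu)\,\a(F_\bullet\mu)=\la_1\a(\mu)$, which couples $d$ with the skewness change and does not determine either alone; a second relation is needed, and this is exactly what the paper's self-pairing $(F^*Z_{\nu'})^2=\la_2 Z_{\nu'}^2$ supplies. Second, and more fundamentally, the ``slope identity'' $k_{JF}=(\la_1-1)m_\vv$ is the heart of your argument yet is only asserted; establishing it requires a local analysis of $\tilde F$ at $p(\vv)$ (after resolving indeterminacies) that you flag as ``the main technical obstacle'' — which is to say it is not done. Third, step (ii) asserts a second fixed point by ``a direct valuative computation using the local form of $\tilde F$ at $p(\vv)$,'' but since $F_\bullet\equiv\id$ on the segment is precisely what the lemma is trying to prove, producing such a fixed point \emph{a priori} requires the same unestablished local normal form. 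Until (ii) and the slope identity are proved, the argument is circular in structure for the divisorial case and incomplete in the others. The paper's route avoids all of this by exploiting the unique-preimage consequence of total invariance directly at the level of Weil classes.
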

\begin{lem}\label{l:tanvec0}
  Assume $\nu_*\in\cT_F$ is divisorial but not a rational pencil
  valuation 
  Then $F_\bullet U(\vv)=U(\mathsf{F} \vv)$ for any tangent 
  vector $\vv$ at $\nu_*$.
\end{lem}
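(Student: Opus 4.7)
The plan is to prove the two inclusions $F_\bullet U(\vv) \subset U(\mathsf{F}\vv)$ and $U(\mathsf{F}\vv) \subset F_\bullet U(\vv)$ separately. The forward inclusion is essentially formal; the reverse inclusion carries the real content and is where the non-rational-pencil hypothesis intervenes.

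For the forward inclusion, I would use continuity of $F_\bullet$, total invariance of $\nu_*$ from Lemma~\ref{l:TFstruc}, and the tree structure of $\cV_1$. Given $\mu \in U(\vv)$, the half-open segment $(\nu_*,\mu]$ is connected, hence its image under $F_\bullet$ is connected. Total invariance gives $F_\bullet^{-1}(\nu_*)=\{\nu_*\}$, so the image avoids $\nu_*$ and therefore lies in a single connected component of $\cV_1\setminus\{\nu_*\}$; the connected components of the latter are exactly the open sets $U(\vv')$ for $\vv'\in T\nu_*$. Applying the defining action of the tangent map $\mathsf{F}$ to a small initial subsegment of $(\nu_*,\mu]$ near $\nu_*$ identifies this component as $U(\mathsf{F}\vv)$.

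For the reverse inclusion, first observe that $\nu_*$ being divisorial and not a rational pencil forces $\a(\nu_*)>0$, since quasimonomial valuations in $\cV_1$ with zero skewness are rational pencils. Fix a tight admissible compactification $X$ on which $\nu_*$ corresponds to a prime $E_*$, and identify $T\nu_*$ with $E_*\simeq\P^1$ so that $\vv$ becomes a point $q\in E_*$, $\mathsf{F}\vv$ becomes $q'=\mathsf{F}(q)\in E_*$, and $\mathsf{F}$ is the restriction of the rational lift $\tF:X\dashrightarrow X$ to $E_*$. Given any $\mu'\in U(\mathsf{F}\vv)$, which has center $q'$ on $X$, I would invoke valuation extension theory: since $F$ is proper in the setting of Proposition~\ref{p:proper}, the lift $\tF$ is proper and dominant of degree $\la_2$, so every valuation on $\C(X)$ centered at $q'$ has at least one preimage under $F_\bullet$ centered at each scheme-theoretic point of $\tF^{-1}(q')$, in particular a preimage $\mu$ centered at $q$. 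By our identification, such $\mu$ lies in $U(\vv)$ and $F_\bullet\mu=\mu'$. When $q$ is an indeterminacy point of $\tF$, one first blows up $X$ above $q$ to make the lift holomorphic on the exceptional fiber and repeats the extension argument there; the resulting preimages still have center $q$ on the original $X$, so they still lie in $U(\vv)$.

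The main obstacle is the blowup analysis at indeterminacy points of $\tF$ on $E_*$: one must verify that the blowups preserve the tight compactification framework and that the extended preimage valuations do not inadvertently leave the tangent direction $\vv$. The hypothesis $\a(\nu_*)>0$ is essential here, ensuring that the local geometry around $E_*$ is two-dimensional enough to support surjectivity in each individual tangent direction; in the rational pencil case, by contrast, the degenerate pencil structure can concentrate preimages into a single tangent direction and the per-branch surjectivity $F_\bullet U(\vv)=U(\mathsf{F}\vv)$ can fail.
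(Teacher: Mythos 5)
Your proof takes a genuinely different route from the paper's. You decompose the statement into two inclusions and handle the reverse one via valuation extension (lying-over) on a smooth tight compactification $X$, proposing to blow up $X$ above indeterminacy points of $\tF|_{E_*}$. The paper instead goes the other way: it \emph{contracts}. Fixing a tight compactification $X_0$ where $\nu_*$ has one-dimensional center $E_0$, it observes that since $\a(\nu_*)>0$, the class $Z_0 = Z_{\nu_*}$ has $Z_0^2 > 0$ by Lemma~\ref{lem:fundsol}, so by the Hodge Index Theorem the intersection form is negative definite on the span of the remaining primes. These can therefore be blown down to a normal singular surface $X_1$ with $X_1\setminus\C^2 = E_1$ irreducible; the lift $\tF:X_1\to X_1$ is then a holomorphic finite germ at each point of $E_1$, and both inclusions follow at once, with no case analysis. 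This is exactly where the non--rational-pencil hypothesis enters the paper's argument (to guarantee $Z_0^2>0$), so your identification of that hypothesis with $\a(\nu_*)>0$ is correct, but it is used for a contraction rather than for two-dimensionality of the germ.

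The gap in your reverse inclusion is precisely the step you flag as ``the main obstacle.'' After resolving indeterminacy above $q$ by a modification $\hat X\to X$ with holomorphic lift $\hat F:\hat X\to X$, the lying-over theorem gives, for a valuation $\mu'$ centered at $q'=\mathsf{F}(q)$, a preimage valuation $\mu$ centered at \emph{some} point of the fiber $\hat F^{-1}(q')$. But $\hat F^{-1}(q')$ may contain points lying over other points of $E_*$ besides $q$; nothing in your argument forces the lifted valuation to be centered in the exceptional fiber over $q$, and so you have not shown $\mu\in U(\vv)$. This is the very issue the paper's contraction sidesteps: on $X_1$ there is a single prime at infinity and the lift is already holomorphic and finite, so no such ambiguity arises. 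To repair your approach you would need a local degree count at $q$ (that the germ of $\tF$ at $q$ is surjective onto a neighborhood of $q'$ after resolution) together with an argument that the points of $\hat F^{-1}(q')$ lying over $q$ actually exhaust a positive fraction of the multiplicity, which is not established here.
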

With these two lemmas at hand, we continue the proof of 
Proposition~\ref{p:proper}.

First suppose $\cT_F=\{\nu_*\}$ is a singleton.
Then $\nu_*$ cannot be infinitely singular 
by Lemma~\ref{l:tanvec}. Neither can it be a rational pencil
valuation, since then $\deg F^n\sim n\la_1^n$. 
Hence $\nu_*$ is quasimonomial with $\a(\nu_*)>0$.

If $\cT_F$ is not a singleton, pick two distinct 
valuations $\nu_1,\nu_2 \in \cT_F$. 
The set $[\nu_1,\nu_2]\cap\cT_F$ is clearly closed, 
and Lemmas~\ref{l:tanvec} and~\ref{l:tanvec0} show that it is open. 
Hence $[\nu_1,\nu_2] \subset \cT_F$, so 
$\cT_F$ is a subtree of $\cV_1$. 
Suppose $\nu$ is a branch point of $\cT_F$. Then $\nu$ is divisorial and $\a(\nu)>0$. 
Each branch of $\cT_F$ emanating from $\nu$ corresponds 
to a tangent vector $\vv$ which is totally invariant 
by the tangent map $\mathsf{F}$ at $\nu$, 
since a valuation in $\cT_F$ is totally invariant.
Now $\mathsf{F}$ can be identified with a rational map on $\P^1$ 
whose degree equals $\la_2/d(F,\nu)=\la_1 >1$.
Hence $\mathsf{F}$ admits 
at most two totally invariant tangent vectors. 
This gives a contradiction.
We have shown that $\cT_F$ is a non-empty closed segment of $\cV_1$.

Suppose $\cT_F\subsetneq\cT_{F^2}$ and that $\cT_F$ is 
not a singleton. We can then find a valuation $\nu$ which is 
an interior point of $\cT_{F^2}$ but an endpoint of $\cT_F$.
Consider the tangent map $\mathsf{F}$ at $\nu$. We see
that $\mathsf{F}^2$ admits two totally invariant tangent vectors,
exactly one of which is (totally) invariant by $\mathsf{F}$.
This is a contradiction. Similarly, $\cT_F$ cannot be a 
singleton consisting of an endpoint of $\cT_{F^2}$.
An analogous argument shows that 
$\cT_{F^n}=\cT_F$ for all $n\ge3$, establishing~(b).

Now suppose that $\cT_F$ is a nontrivial segment, and 
consider a subsegment $I=]\nu_1,\nu_2[\subset\cT_F$,
that is totally ordered, \ie $\nu_1<\nu_2$. We claim that
the multiplicity function $m$ is constant on $I$. 
This will imply that the endpoints of $\cT_F$ are 
divisorial (rather than infinitely singular), and hence
complete the proof of~(a).
The Jacobian formula~\eqref{e:jacobian} gives 
$\nu(JF)=(\la_1-1)A(\nu)$, so the function 
$\nu\to\nu(JF)$ is piecewise affine on $I$ 
(with respect to skewness) with slope $m(\nu)(\la_1-1)$.
Now $\nu\to\nu(JF)$ is concave on $I$~\cite[\S A.4]{eigenval}, 
whereas $\nu\mapsto m(\nu)$ is nondecreasing on $I$. 
Thus $m$ is constant on $I$, as required.

Next we turn to~(c).
We may replace $F$ by $F^2$ so that $\cT_F=\cT_{F^2}$.
Pick $\nu\in\cV_1\setminus\cT_F$ 
and write $\nu_*\=r(\nu)\in \cT_F$.
Then $\nu_*$ is divisorial and $\a(\nu_*)>0$. 
We need to show that $F^n_\bullet\nu\to \nu_*$ as $n\to\infty$.
Denote by $\vv$ the tangent vector at $\nu_*$ represented by $\nu$. 
Note that $U(\vv)\cap\cT_F=\emptyset$.
If $\vv$ is not preperiodic, then 
for $n$ large enough, the functions 
$\mu \mapsto \mu(JF)$ and $\mu \mapsto d(F,\mu)$ are both 
constant on $ U(\vv_n)$,
where $\vv_n \= \mathsf{F}^n\vv$, see~\cite[Proposition~3.4]{eigenval}.
By Lemma~\ref{l:tanvec0}, $F^n_\bullet \nu \in U(\vv_n)$ for all $n$,
and the Jacobian formula~\eqref{e:jacobian} implies 
$|A(F^n_\bullet \nu)-A(\nu_*)|\sim\la_1^{-n} \to 0$.
This implies $F^n_\bullet\nu\to\nu_*$ in the weak topology.

When $\vv$ is preperiodic, we may assume it is fixed but 
not totally invariant by $\mathsf{F}$.
Let $I \= [\nu_*,\nu]$ and 
$\Omega \= \{ \mu \in I\ |\ F^n_\bullet\mu\to\nu_*\}$. 
As in the proof of Lemma~\ref{lem:quotient}, 
$F_\bullet$ is given  on $I$ by a (piecewise) M\"obius transformation 
with non-negative integer coefficient fixing $\nu_*$. 
Hence $\Omega$ contains a small neighborhood of $\nu_*$ in $I$, and is therefore open. 
\begin{lem}\label{l:equicont}
  Let $\nu_1,\nu_2\in\cV_1$ be comparable 
  (\ie $\nu_1\le\nu_2$ or $\nu_2\le\nu_1$)
  valuations with $\a(\nu_i)>0$, $i=1,2$.
  Then
  \begin{equation}\label{e:equicont}
    |A(F_\bullet\nu_2)-A(F_\bullet\nu_1)|
    \le\frac{4}{\a(\nu_1)\a(\nu_2)}|A(\nu_2)-A(\nu_1)|
  \end{equation}
  for any dominant polynomial mapping $F:\C^2\to\C^2$.
\end{lem}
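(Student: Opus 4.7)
The point of departure is the Jacobian formula~\eqref{e:jacobian},
\[
A(F_\bullet\nu)\,d(F,\nu) \;=\; A(\nu)+\nu(JF).
\]
Assume without loss of generality $\nu_1\le\nu_2$, so that $\a(\nu_2)\le\a(\nu_1)$, and set $D_i:=d(F,\nu_i)$. Subtracting the Jacobian identity at $\nu_2$ and at $\nu_1$ and regrouping yields
\[
\bigl[A(F_\bullet\nu_2)-A(F_\bullet\nu_1)\bigr]D_1 \;=\; [A(\nu_2)-A(\nu_1)]+[\nu_2(JF)-\nu_1(JF)]+A(F_\bullet\nu_2)\,[D_1-D_2].
\]
The plan is to bound each of the three summands on the right by a multiple of $|A(\nu_2)-A(\nu_1)|$, with coefficient depending only on $\a(\nu_1)$ and $\a(\nu_2)$.

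Three inputs are needed. First, for any polynomial $P\in R$, the function $\nu\mapsto-\nu(P)$ is concave piecewise affine in skewness along tree-segments in $\cV_1$, with slopes bounded in absolute value by $\deg(P)$; hence
\[
|\nu_2(P)-\nu_1(P)|\le\deg(P)\,|\a(\nu_1)-\a(\nu_2)|\le\deg(P)\,|A(\nu_2)-A(\nu_1)|,
\]
the second inequality coming from the relation $|dA|=m\,|d\a|$ with $m\ge 1$. Applied to $P=JF$ and $P=F^*L$, this controls the second and third summands above. Second, the intersection-theoretic lower bound $-\nu(P)\ge\a(\nu)\deg(P)$ for $\nu\in\cV_1$ (a consequence of nefness of $Z_\nu$ and the decomposition $Z_P=\deg(P)\cL-E$ with $E$ effective exceptional) gives in particular $D_i\ge\a(\nu_i)\deg(F)$. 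Third, combining the universal upper bound $-\nu(P)\le\deg(P)$ on $\cV_1$ with $|A(\nu_2)|\le 2$ and $\deg(JF)\le 2\deg(F)-2$, the Jacobian formula at $\nu_2$ yields the crucial estimate $|A(F_\bullet\nu_2)|\le 2/\a(\nu_2)$.

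Inserting these bounds in the main identity, using $(1+\deg JF)/\deg(F)\le 2$ to collapse the $\deg(F)$ factors against $D_1\ge\a(\nu_1)\deg(F)$, one arrives at
\[
\bigl|A(F_\bullet\nu_2)-A(F_\bullet\nu_1)\bigr|\le\frac{|A(\nu_2)-A(\nu_1)|}{\a(\nu_1)}\bigl[2+|A(F_\bullet\nu_2)|\bigr]\le\frac{|A(\nu_2)-A(\nu_1)|}{\a(\nu_1)}\cdot\frac{2\a(\nu_2)+2}{\a(\nu_2)},
\]
and since $\a(\nu_2)\le 1$ the final bracket is at most $4$, which gives the claimed inequality. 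The main obstacle is justifying the linear slope bound $|\nu_2(P)-\nu_1(P)|\le\deg(P)|\a(\nu_1)-\a(\nu_2)|$: while easily checked for monomial valuations by direct inspection, the general case requires the intersection-theoretic identity $-\nu(P)=(Z_\nu\cdot Z_P)$ together with the piecewise-linear dependence of $Z_\nu$ on the skewness parameter along tree-segments, the slopes ultimately being controlled by $\deg(P)$ owing to the normalization $(Z_\nu\cdot\cL)=1$.
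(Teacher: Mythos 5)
Your proof is correct and follows essentially the same route as the paper's. The regrouping
\[
A(F_\bullet\nu_2)-A(F_\bullet\nu_1)
=\frac{(A_2-A_1)+(J_2-J_1)}{D_1}+A(F_\bullet\nu_2)\frac{D_1-D_2}{D_1}
\]
is algebraically identical to the paper's
$\frac{(A_2+J_2)(d_1-d_2)}{d_1d_2}+\frac{(A_2-A_1)+(J_2-J_1)}{d_1}$
(since $A(F_\bullet\nu_2)=(A_2+J_2)/d_2$), and the three inputs you use --- the slope bound $|\nu_2(P)-\nu_1(P)|\le\deg(P)|\a_1-\a_2|$ for tree potentials, the lower bound $D_i\ge\a_i\deg F$, and the a priori bound $|A_2+J_2|\le 2\deg F$ (equivalently $|A(F_\bullet\nu_2)|\le 2/\a_2$) --- are exactly the ones the paper invokes. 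Two small remarks: the factor you call ``the final bracket'' $\tfrac{2\a_2+2}{\a_2}$ is not itself at most $4$; you mean that its \emph{numerator} $2\a_2+2\le 4$, which then yields $\tfrac{2\a_2+2}{\a_1\a_2}\le\tfrac{4}{\a_1\a_2}$. Also, the paper justifies the slope bound simply by citing the tree-potential theory of~\cite[\S A.4]{eigenval} rather than by the intersection-theoretic heuristic you sketch at the end; your sketch points in the right direction but would need to be fleshed out (the control of slopes really comes from the total mass $\deg P$ of the tree Laplacian of $\nu\mapsto\nu(P)$, not directly from the normalization $(Z_\nu\cdot\cL)=1$).
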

This result implies the (local) equicontinuity of the family 
$(F^n_\bullet)$ on $\Omega \cap \{ \a >0 \}$, hence
$\Omega \cap \{ \a >0 \}$ is closed. 
When $\a(\nu) >0$, we conclude that $\nu\in \Omega$.
Otherwise, $\a(\nu)=0$. To complete the proof, we only
need to show that $\a(\nu_n)>0$ for some $n\ge1$,
where $\nu_n=F^n_\bullet\nu$.

Suppose to the contrary that $\a(\nu_n)=0$
for all $n\ge1$.
As in the proof of Proposition~\ref{p:nonproper} we get
$F^{n*}Z_{\nu_n}=c_nZ_\nu$, where $c_n>0$. 
By Proposition~\ref{prop:pull}, this shows that 
$\nu$ is the only preimage of $\nu_n$ 
under $F^n_\bullet$. 
Let $\vv_n$ be the tangent vector at $\nu_*$ represented by 
$\nu_n$. By Lemma~\ref{l:tanvec0}, $\vv$ is the only 
preimage of $\vv_n$ under $\mathsf{F}^n$. 
If $n\ge2$, this implies that $\vv$ is totally invariant under 
$\mathsf{F}^2$, contradicting Lemma~\ref{l:tanvec} since 
$U(\vv)\cap\cT_{F^2}=\emptyset$.
This proves~(c).

\smallskip
Finally we consider the monomialization statement in~(d).
The starting point is
\begin{lem}\label{lem:moncrit}
  A quasimonomial valuation $\nu\in\cV_1$ is monomial 
  in some affine coordinates iff $A(\nu)+m(\nu)\a(\nu)<0$.
\end{lem}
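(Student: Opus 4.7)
The plan is to prove the two directions using the integral representation $A(\nu)=-2-\int_{-\deg}^\nu m\,d\a$ from \cite[Theorem~A.4]{eigenval} together with the Abhyankar-Moh-Suzuki line embedding theorem invoked in \S\ref{sec:V1}. For the forward direction, I assume $\nu$ is monomial in polynomial coordinates $(x',y')$ with weights $\nu(x')=-s,\ \nu(y')=-t$; after swapping and rescaling, I may take $t=1$ and $s\in[0,1]$. In these coordinates $\nu$ lies on the segment of $\cV_0$ joining $-\deg'$ to the pencil valuation associated to $\{y'=\mathrm{const}\}$, which is a rational pencil valuation hence belongs to $\cV_1$. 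Using the toric compactification of $\C^2$ determined by the $(x',y')$-fan, one computes directly that along this segment $m\equiv 1$, $\a=s$, and $A=-1-s$, yielding $A(\nu)+m(\nu)\a(\nu)=-1<0$. One then verifies that switching from the $(x',y')$-normalization to the original one rescales $\a$, $A$, $m$ by a common positive constant, preserving the sign.

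For the converse, assume $A(\nu)+m(\nu)\a(\nu)<0$. By \S\ref{sec:V1} there exists a pencil valuation $\mu=\mu_{|C|}$ dominating $\nu$, and the key step is to arrange for $\mu$ to be \emph{rational}; granted this, Abhyankar-Moh-Suzuki produces affine coordinates in which $|C|$ becomes the linear pencil $\{x'=\la\}$, placing $\nu$ on the segment joining $-\deg'$ to the pencil valuation of $\{x'=\mathrm{const}\}$, which by the forward direction consists of monomial valuations. Integrating by parts $A(\mu)-A(\nu)=-\int_\nu^\mu m\,d\a$ along $[\nu,\mu]$ yields
\begin{equation*}
  A(\mu)=A(\nu)+m(\nu)\a(\nu)+\sum_i\a_i\,(m_{i+1}-m_i),
\end{equation*}
which, combined with $\a(\mu)=0$ and $A(\mu_{|C|})=(2g-1)/\deg(C)$, expresses the generic genus $g$ in terms of our hypothesis and the multiplicity jumps on $[\nu,\mu]$.

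The main obstacle is that the jump contributions above are non-negative, so the hypothesis $A(\nu)+m(\nu)\a(\nu)<0$ does not automatically force $A(\mu)<0$, and hence $g=0$, for an \emph{arbitrary} dominating pencil. The argument will therefore have to select $\mu$ carefully---for instance as the tree retraction of $\nu$ onto the set of pencil valuations, or as a pencil valuation closest to $\nu$ in the tree metric---and then exploit the integrality constraints $b_E^2\a(\nu_E),\,b_E A(\nu_E)\in\Z$ on the divisorial valuations appearing on the path (see Lemma~\ref{lem:staytight} and \cite[{\S}A.1]{eigenval}) to exclude $g\ge 1$, thereby guaranteeing the existence of a rational dominating pencil.
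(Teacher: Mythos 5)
Your forward direction is sound and proceeds by a more computational route than the paper: you normalize coordinates, compute $\a$, $A$, $m$ explicitly along the monomial segment, and conclude by sign invariance. The paper instead derives a single inequality valid for \emph{any} dominating pencil, $A(\nu_{|C|})\ge A(\nu)+m(\nu)\a(\nu)$, obtained from $m(\mu)\ge m(\nu)$ and $d\a\le0$ on $[\nu,\nu_{|C|}]$; combining with $A(\nu_{|C|})<0$ for rational pencils gives the same conclusion without any explicit coordinate computation, and in a form that is reused in the converse.

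The converse is where the proposal has a genuine gap. You correctly set up the integration by parts, correctly observe that the jump term $\sum_i\a_i(m_{i+1}-m_i)$ is non-negative, and hence correctly identify that $A(\nu)+m(\nu)\a(\nu)<0$ does not force $A(\mu)<0$ for an arbitrary dominating pencil $\mu$. But you then only \emph{propose} a strategy (retraction onto pencil valuations, or nearest pencil in the tree metric, plus integrality constraints) without carrying it out, and this strategy is not the right one: the sets of pencil valuations is not a subtree onto which one can retract, and the integrality of $b_E^2\a(\nu_E)$ and $b_EA(\nu_E)$ does not by itself force the jump sum to vanish or bound $g$. The actual missing ingredient is a structural fact about curves with one place at infinity: one may \emph{choose} the curve $C$ with $\deg(C)=m(\nu)$, and for this choice the multiplicity $m$ is \emph{constant} on the whole segment $[\nu,\nu_{|C|}]$. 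With no jumps, your own integration-by-parts identity collapses to the equality $A(\nu_{|C|})=A(\nu)+m(\nu)\a(\nu)$, so $A(\nu_{|C|})<0$, hence $g=0$ by $A(\nu_{|C|})=(2g-1)/\deg C$, and the Line Embedding Theorem finishes. Without the existence of this particular dominating pencil, the argument does not close.
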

Define $\nu_*$ as the minimal element in $\cT_{F^2}$.
We claim that, in suitable affine coordinates,
$\nu_*$ becomes a monomial valuation. In view
of Lemma~\ref{lem:moncrit} it suffices to prove
that $A(\nu_*)+m(\nu_*)\a(\nu_*)<0$.
We assume $\nu_*\ne-\deg$. 
By Lemma~\ref{l:tanvec} and the minimality of $\nu_*$,
the tangent vector at $\nu_*$ represented by $-\deg$ is
not totally invariant under the tangent map. 
Hence we can find a small segment 
$I'\subset [-\deg ,\nu_*]$
and another small segment $I$ such that $I\cap I'=\{\nu_*\}$
and $F_\bullet$ maps $I$ homeomorphically onto $I'$.

We use the Jacobian formula~\eqref{e:jacobian}.
If the segments are chosen small enough, we have
$A=-m_*\a+\delta$ on $I'$, where $m_*=m(\nu_*)$ and 
$\delta\in\R$. We must prove that $\delta<0$.
The right hand side of~\eqref{e:jacobian} can be written
\begin{multline*}
  -m_*d(F,\nu)\a(F_\bullet\nu)+\delta\,d(F,\nu) 
  =-m_*(Z_{F_*\nu}\cdot Z_{\nu_*})+\delta\,d(F,\nu) =\\
  =-m_*(Z_\nu\cdot F^*Z_{\nu_*})+\delta\,d(F,\nu)
  =-m_*\la_1\a(\nu_*)+\delta\,d(F,\nu).
\end{multline*} 
As the left hand side in~\eqref{e:jacobian}
is strictly increasing in $\nu$ we get $\delta<0$.
  
Hence $\cT_{F^2}$ contains a monomial valuation.
If $\cT_{F^2}=\{\nu_*\}$ is a singleton, 
then $\cT_F=\{\nu_*\}$, 
$\nu_*$ is divisorial and $\a(\nu_*)>0$,
and there is nothing left to do.
Assume therefore that $\cT_{F^2}$ is a nontrivial segment
that contains at least one monomial valuation.

First suppose that $\cT_{F^2}$ contains a unique 
monomial valuation $\nu_*$; this is then necessarily
divisorial, given by $\nu(x)=-p/q$, $\nu(y)=-1$, where
$q\ge p\ge1$ and $\gcd(p,q)=1$. We wish to change 
coordinates so that $\cT_{F^2}$ contains a nontrivial segment
of monomial valuations. For this, it suffices to prove
that $p=1$. Indeed, there then exists a valuation 
$\nu_1\in\cT_{F^2}$ with $\nu_1>\nu_*$, 
$\nu_1(x)=-1/q$, $\nu_1(y)=-1$ and $\nu_1(y+ax^q)>-1$ for
some $a\in\C^*$. After a change of coordinates 
by a shear of the form $(x,y)\mapsto(x,y+ax^q)$,
$\cT_{F^2}$ will then contain a monomial subsegment.

To prove $p=1$, pick $\nu_1>\nu_*$ such 
that $\nu_1\in\cT_{F^2}$ and such that the multiplicity is
constant, equal to $q$, on $I=\,]\nu_*,\nu_1]$.
The Jacobian formula~\eqref{e:jacobian} 
applied to $F^{2n}$ on $I$ yields
$\nu(JF^{2n})=(\la_1^{2n}-1)A(\nu)$, so 
$\nu\mapsto\nu(JF^{2n})$ is affine on $I$ with slope
$q(\la_1^{2n}-1)$. 
This implies $\deg(JF^{2n})\ge q(\la_1^{2n}-1)$.
Applying the Jacobian formula to $-\deg$ gives
$\deg(JF^{2n})=-\deg(F^{2n})A(F^{2n}_\bullet(-\deg))-2$.
The retraction of $-\deg$ on $\cT_{F^2}$ equals $\nu_*$,
so $A(F^{2n}_\bullet(-\deg))\to A(\nu_*)=-1-p/q$ as $n\to\infty$.
On the other hand, 
$\deg(F^{2n})\le d(F^{2n},\nu_*)/\a(\nu_*)=\la_1^{2n}q/p$.
Altogether this gives $q/p+1\ge q$, so $p=1$.

We may therefore assume that $\cT_{F^2}$ contains a 
nontrivial subsegment consisting of monomial valuations.
We saw that the multiplicity on any totally ordered open subsegment
of $\cT_{F^2}$ is constant, hence $\cT_{F^2}$ is a segment which is 
the union of a segment of monomial valuations $[\nu_0, \nu_1]$, 
$\nu_0 \le \nu_1$ and a totally
ordered segment whose minimum is $\nu_0$.
We may assume 
$\nu_0(x)=-p/q$, $\nu_0(y)=-1$, where $q\ge p\ge1$ and $\gcd(p,q)=1$.
If $p>1$, then the above argument shows that 
$\nu_0$ is an endpoint in $\cT_{F^2}$, yielding
$\cT_{F^2}=[\nu_0,\nu_1]$. 
Hence assume $p=1$ and that $\cT_{F^2}\ne[\nu_0,\nu_1]$.
We proceed by induction on $q$. 
If $q=1$, then $\nu_0=-\deg$ and 
we may change coordinate by an affine map of the form
$(x,y)\mapsto(x,y+ax)$. 
The valuations in $[\nu_0,\nu_1]$ remain monomial, 
but $\cT_{F^2}$ contains a new monomial valuation 
$\nu'_1$, which we may assume maximal. 
Thus $\cT_{F^2}=[\nu'_1,\nu_1]$ and we are done.
If $q>1$, then we change coordinates by a shear of the form
$(x,y)\mapsto(x,y+ax^q)$. 
The valuations in $[\nu_0,\nu_1]$ remain monomial, 
but $\nu_0$ is no longer the minimal monomial valuation in
$\cT_{F^2}$. 
By the inductive hypothesis we may now make $\cT_{F^2}$
monomial. This completes the proof of Proposition~\ref{p:proper}.
%
%
\subsection{Proofs of Lemmas}
Finally we prove the lemmas used above.
\begin{proof}[Proof of Lemma~\ref{l:tanvec}]
  Pick a segment $I$ in $\cV_0$ representing the 
  totally invariant tangent vector at $\nu_*$. If $\nu\in I$ 
  is close enough to $\nu_*$, then $\nu'\=F_\bullet\nu\in I$.
  As the tangent vector is totally
  invariant, $\nu$ is then the only preimage of $\nu'$ 
  under $F_\bullet$.
  Write $d=d(F,\nu)$, $\a=\a(\nu)$, $\a'=\a(\nu')$.
  Then $F_*Z_\nu=dZ_{\nu'}$ and since $F$ is proper
  $F^*Z_{\nu'}=\frac{\la_2}{d}Z_\nu$.
  On the one hand, this gives
  \begin{equation*}
    \la_2\a'
    =\la_2(Z_{\nu'}\cdot Z_{\nu'})
    =(F^*Z_{\nu'}\cdot F^*Z_{\nu'})
    =\frac{\la_2^2}{d^2}(Z_\nu\cdot Z_\nu)
    =\frac{\la_2^2}{d^2}\a.
  \end{equation*}
  On the other hand, we get
  \begin{equation*}
    \frac{\la_2}{d}\a(\nu\wedge\nu_*)
    =(F^*Z_{\nu'}\cdot Z_*)
    =(Z_{\nu'}\cdot F_*Z_*)
    =\la_1\a(\nu'\wedge\nu_*).
  \end{equation*}
  Now either $\nu,\nu'>\nu_*$ or $\nu,\nu'<\nu_*$.
  In both cases, we easily deduce, 
  using $\la_2=\la_1^2$ and $\a(\nu_*)>0$, 
  that $\la_1=d$ and $\a=\a'$.
  Hence $\nu'=F_\bullet\nu=\nu$, which completes the proof.
\end{proof}
\begin{proof}[Proof of Lemma~\ref{l:tanvec0}]
  Pick a tight compactification $X_0$ of $\C^2$
  on which the center of 
  $\nu_*$ is a prime $E_0$ of $X$.
  Let $V\subset\Pic(X_0)$ be the subspace spanned by all primes 
  $E\ne E_0$. Then $V$ lies in the orthogonal complement 
  of the class $Z_0$ determined by the condition 
  $Z_0\cdot Z=\ord_{E_0}(Z)$ for all $Z$. 
  But $\a(\nu_*)>0$, hence $Z_0^2>0$,
  see Lemma~\ref{lem:fundsol}.
  Thus the intersection form is negative definite on $V$. 
  We may therefore contract all primes $E\ne E_0$:
  we get a normal (singular) surface $X_1$ 
  on which the center of $\nu_*$ is a prime $E_1$,
  and the lift $\tF:X_1\to X_1$ is holomorphic. 
  Pick a tangent vector $\vv$ at $\nu_*$. 
  It corresponds to a point $p\in E_1$: the set $U(\vv)$ is
  exactly the set of valuations $\nu\in\cV_0$ centered at $p$.
  The map $\tF$ induces a finite germ $(X_1,p)\to(X_1,\tF(p))$ 
  hence $F_\bullet$ maps $U(\vv)$ onto $U(\mathsf{F}(\vv))$.
\end{proof}
\begin{proof}[Proof of Lemma~\ref{l:equicont}]
  The proof is based on the Jacobian formula~\eqref{e:jacobian}.
  Write $\a_i=\a(\nu_i)>0$, 
  $A_i=A(\nu_i)<0$,
  $A'_i=A(F_\bullet\nu_i)<0$,
  $d_i=d(F,\nu_i)>0$ and
  $J_i=\nu_i(JF)<0$
  for $i=1,2$
  and write $d=\deg F$.
  The functions $\nu\mapsto-d(F,\nu)$ and
  $\nu\mapsto\nu(JF)$ define tree potentials on $\cV_0$ 
  in the sense of~\cite[{\S}A.4]{eigenval}. 
  Hence $|d_2-d_1|\le d|\a_2-\a_1|\le d|A_2-A_1|$,
  $|J_2-J_1|\le(\deg JF)|\a_2-\a_1|\le(2d-2)|A_2-A_1|$
  and $d_i\ge d\a_i$.
  Moreover, $|A_i+J_i|\le 2d$.
  Thus~\eqref{e:jacobian} gives
  \begin{multline*}
    |A'_2-A'_1|
    =\left|
      \frac{A_2+J_2}{d_2}-\frac{A_1+J_1}{d_1}
    \right|=\\
    =\left|
      \frac{(A_2+J_2)(d_1-d_2)}{d_1d_2}
      +\frac{(A_2-A_1)+(J_2-J_1)}{d_1}
    \right|
    \le\left(
      \frac{2}{\a_1\a_2}+\frac{2}{\a_1}
    \right)
    |A_2-A_1|,
  \end{multline*}
  which completes the proof since $0<\a_2\le1$.
\end{proof}
\begin{proof}[Proof of Lemma~\ref{lem:moncrit}]
  The proof is based on~\cite[Appendix~A]{eigenval}. By
  the Line Embedding Theorem it is sufficient to prove that
  $A(\nu)+m(\nu)\a(\nu)<0$ iff $\nu\in \cV_1$ is dominated by a
  rational pencil valuation.

  Suppose $C$ is a curve with one place at infinity whose
  associated pencil valuation $\nu_{|C|}$ dominates $\nu$.  Then
  $\a(\nu_{|C|})=0$ and $\a\le0$ on $[\nu,\nu_{|C|}]$, so
  \begin{equation*}
    A(\nu_{|C|})-A(\nu)
    =-\int_{\nu}^{\nu_{|C|}}m(\mu)\,d\a(\mu)
    \ge-\int_{\nu}^{\nu_{|C|}}m(\nu)\,d\a(\mu)
    =m(\nu)\a(\nu).
  \end{equation*}
  By~\cite[Proposition~A.4]{eigenval}, 
  the pencil $|C|$ is rational iff $A(\nu_{|C|})<0$.  
  Hence, $A(\nu)+m(\nu)\a(\nu)<0$ if $|C|$ is rational.
  On the other hand, we may always pick the curve $C$ with
  $\deg(C)=m(\nu)$. Then the multiplicity $m$ is constant equal to
  $m(\nu)$ on $[\nu,\nu_{|C|}]$.
  Thus equality holds above, and we
  get $A(\nu_{|C|})=A(\nu)+m(\nu)\a(\nu)$. 
  So when $A(\nu)+m(\nu)\a(\nu)<0$, $|C|$ is rational.
\end{proof}
%
%
%
%
\section{Proofs of Theorems~A and~B}\label{sec:pfAB}
Fix an embedding $\C^2\subset\P^2$ and consider an
arbitrary polynomial mapping $F:\C^2\to\C^2$. 
We start by making a few general remarks.

First, if $X$ is an admissible compactification of $\C^2$
and the lift $\tF:X\dashrightarrow X$ satisfies 
$\tF^{(j+n)*}=\tF^{*j}\tF^{n*}$ on $\Pic(X)$ for $j\ge 1$, then  
$\deg(F^{j+n})=(\tF^{(j+n)*}\cL\cdot\cL)=(\tF^{*j}\tF^{n*}\cL\cdot\cL)$ 
with $\cL\in\Pic(X)$ the (pull-back of the) 
class of a line in $\P^2$. Hence $(\deg(F^j))_{j\ge n}$ 
satisfies the linear recurrence relation
determined by the characteristic polynomial of the 
linear map $\tF^*:\Pic(X)\to\Pic(X)$. 
In the basis given by the primes of $X$, 
$\tF^*$ can be expressed with integer coefficients.
Hence Theorem~B follows from Theorem~A in this case.

Second, if $F:\C^2\to\C^2$ is not dominant, its image is a point 
or a curve. In either case, we pick an admissible compactification 
$X$ of $\C^2$ such that the map $X\to\P^2$ induced by $F$ is
holomorphic. One can then check that the lift $\tF:X\to X$ is
also holomorphic. This establishes Theorems~A and~B in the 
nondominant case.

Third, if $F:X\dashrightarrow Y$, $G:Y\dashrightarrow Z$ 
are dominant rational maps between surfaces, 
and $F^*,G^*$ denotes the action on the respective Picard groups
of these surfaces, then $F^*\circ G^*=(G\circ F)^*$ iff 
no curve in $X$ is contracted by $F$ to an indeterminacy
point of $G$.

Using this, it is not difficult to see that in the case 
$\la_2<\la_1^2$, Theorem~A follows directly from 
Theorems~\ref{thm:stability1},~\ref{thm:stability2} 
and~\ref{thm:stability3}. We obtain Theorem~B
as a consequence, in view of the argument above, but
we have in any case established stronger versions
in Corollaries~\ref{cor:nondivrec},~\ref{cor:divendrec} 
and~\ref{cor:divnonendrec}.

\smallskip
From now on, assume $F$ is dominant and $\la_2=\la_1^2$.
We shall freely use the results in Section~\ref{maximum}.

If $\deg F^n/\la_1^n$ is unbounded, 
then by Proposition~\ref{p:nonproper}
there is a unique rational pencil valuation $\nu_*$ such that 
$F^n_\bullet\nu\to\nu_*$ for every $\nu\in\cV_1$.
We may then proceed exactly 
as in Section~\ref{stab:div}
and prove analogs of Theorem~\ref{thm:stability2} and
Corollary~\ref{cor:divendrec}. 

If instead $\deg F^n/\la_1^n$ is bounded, then we have already 
seen in the proof of Theorem~C that $F$ lifts to a holomorphic
selfmap of a suitable compactification $X$ of $\C^2$, proving 
Theorem~A. However, this compactification need not be smooth or 
dominate the given compactification $\P^2\supset\C^2$,
so Theorem~B does not immediately follow. 

First assume $\la_1=1$ so that $\deg F^n$ is bounded.
Then $F$ is in particular birational. The argument by 
Diller-Favre~\cite[Theorem~0.1]{DF} gives us an admissible 
compactification $X$ of $\C^2$ such that the lift 
$\tF:X\dashrightarrow X$ is algebraically stable.
Thus Theorems~A and~B hold in this case.

Finally assume $\la_1>1$ and $\deg F^n/\la_1^n$ is bounded
and consider the set $\cT_{F^2}$ of eigenvaluations
for $F^2$. We consider three cases. 
In the first case, $\cT_{F^2}=\cT_F=\{\nu_*\}$ is a singleton. 
Then Proposition~\ref{p:proper} shows that 
$F^n_\bullet\nu\to\nu_*$ for any $\nu\in\cV_1$.
We may then proceed exactly 
as in Section~\ref{stab:div}
and prove analogs of Theorem~\ref{thm:stability3} and
Corollary~\ref{cor:divnonendrec}. 
This gives a precise version of Theorem~A (with $X$ a 
tight---hence smooth---compactification of $\P^2$) 
and Theorem~B.

In the second case, $\cT_{F^2}=\cT_F=[\nu_1,\nu_2]$ 
is a segment, where $\nu_1$ and $\nu_2$ are divisorial.
We can then proceed essentially as in Section~\ref{stab:div}.
Namely, let $X_0$ be the minimal admissible compactification
of $\C^2$ such that the centers of $\nu_1$ and $\nu_2$ are 
one-dimensional. We can then make further blowups to arrive
at a tight compactification $X$ such that
for any prime $E$ of $X$ that is the center of a divisorial 
valuation in $\cT_F$, the lift $\tF:X\dashrightarrow X$ is 
holomorphic at any periodic point of $\tF|_E$. 
In view of Proposition~\ref{p:proper}~(c) this shows that
there exists $n\ge1$ such that the following holds
for any prime $E$ of $X$: either $E$ is the center of a valuation 
in $\cT_F$ and then $\tF{E}=E$; or $F^n$ contracts $E$ onto a point
at which all iterates of $X$ are holomorphic.
Thus a precise form of Theorem~A holds. We can also
prove a precise version of Theorem~B as in
Corollary~\ref{cor:divnonendrec} in this setting; in particular,
the sequence $(d(F^j,\nu)_{j\ge n(\nu)})$ satisfies an integral
linear recursion formula for all $\nu\in\cV_1$.

In the third and final case, $\cT_{F^2}=[\nu_1,\nu_2]$ 
is a nontrivial segment and $\cT_F=\{\nu_*\}$ is a singleton.
Again we can prove a precise version of 
Theorem~B as in Corollary~\ref{cor:divnonendrec};
it suffices to apply the result just proved to $F^2$.
\begin{rmk}\label{r:guedjcounter}
  One can check that for $F(x,y)=(y^3,x^2)$ there is no
  toric admissible compactification of $X\supset\C^2$
  such that the lift of $F$ to $X$ satisfies the 
  properties in Theorem~A.
  However, $F$ is holomorphic on $\P^1\times\P^1$. 
  We conjecture that by adding suitable lower degree terms to $F$,
  no smooth compactification of $\C^2$ (admissible or not) will do.
\end{rmk}
%
%
%
%
\section{Small topological degrees: $\la_2 \le \la_1$}
Here we prove Theorem~D and provide examples of 
maps with $\la_2 = \la_1$.
%
%
\subsection{Proof of Theorem~D}\label{green} 
Let $F:\C^2\to\C^2$ be
a dominant polynomial mapping with $\la_2<\la_1$. Define 
\begin{equation*}
  G^+(p)\=\limsup_{n\to\infty}\la_1^{-n}\log^+\|F^np\|.
\end{equation*}
We shall see momentarily that the $\limsup$ is
in fact a limit (and ultimately even a locally uniform one), 
but let us first establish
\begin{lem}\label{lem:loggrowth}
  We have $G^+\le C_1\log^+\|\cdot\|+C_2$ on $\C^2$
  for some constants $C_1,C_2>0$.
\end{lem}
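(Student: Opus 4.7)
The plan is to exploit the good dynamical behaviour on a suitable compactification, as provided by Theorem~A, in order to upgrade the naive one-step growth estimate to a uniform bound valid for all iterates of $F$.

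First I would record the elementary estimate: for any polynomial map $H:\C^2\to\C^2$ of degree $D$ one has
\begin{equation*}
  \log^+\|H(z)\|\le D\log^+\|z\|+M_H,
\end{equation*}
where $M_H$ depends on $H$ through the size of its coefficients (equivalently, the sup norm of $H$ on a fixed ball). Applied to $H=F^n$ this gives $\log^+\|F^n(z)\|\le\deg(F^n)\log^+\|z\|+M_n$. Under the hypothesis $\la_2<\la_1$, the asymptotic $\deg(F^n)\sim\la_1^n$ from~\cite{deggrowth} (reviewed in the appendix) controls the first summand: dividing by $\la_1^n$ bounds it by a uniform multiple of $\log^+\|z\|$, which is exactly the $C_1\log^+\|\cdot\|$ contribution.

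The main difficulty is to control $M_n$. Iterating the one-step estimate naively yields only $M_n=O((\deg F)^n)$, which, after division by $\la_1^n$, blows up whenever $\deg F>\la_1$. To sidestep this I would invoke Theorem~A, choosing a compactification $X$ and an integer $n_0\ge1$ such that $\tF^{*(n_0+j)}=(\tF^*)^j\tF^{*n_0}$ on $\Pic(X)$ for every $j\ge1$. Up to a bounded error, $\log^+\|\cdot\|$ is the potential on $\C^2\subset X$ of a positive closed current in the class $[L_\infty]\in\Pic(X)$; hence $\log^+\|F^{n_0+j}(\cdot)\|$ is a potential in the class $(\tF^*)^j\tF^{*n_0}[L_\infty]$. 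Because $\tF^*$ acts on the finite-dimensional space $\Pic(X)$ with spectral radius $\la_1$, the $\Pic(X)$-norm of these classes satisfies $\|(\tF^*)^j\tF^{*n_0}[L_\infty]\|\le Cj^k\la_1^j$ for some $k$. Converting this cohomological bound into a pointwise upper bound on the corresponding psh potentials, by comparison with the quasi-psh potential of the eigenclass $\theta^*$ produced by the spectral theorem of the appendix, one concludes that $M_{n_0+j}=O(\la_1^j)$. Dividing through by $\la_1^n$ then delivers the desired inequality $G^+\le C_1\log^+\|\cdot\|+C_2$.

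The step I expect to be the hard one is precisely this last translation: passing from the tame growth of the classes $(\tF^*)^j\tF^{*n_0}[L_\infty]$ in $\Pic(X)$ to a genuine uniform pointwise bound on the psh potentials $\log^+\|F^{n_0+j}(\cdot)\|$. This requires a careful treatment, on the possibly singular compactification $X$, of the singular potentials attached to the primes at infinity, together with standard Lelong-number and comparison-of-potentials arguments from pluripotential theory.
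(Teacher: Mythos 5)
Your proposal correctly identifies the structural difficulty: naive iteration gives $\log^+\|F^n\|\le\deg(F^n)\log^+\|\cdot\|+M_n$ with $M_n$ potentially growing like $(\deg F)^n$, which may exceed $\la_1^n$. However, the cohomological bound you propose does not actually control $M_n$. The estimate $\|(\tF^*)^j\tF^{*n_0}\cL\|=O(j^k\la_1^j)$ obtained from Theorem~A only bounds the \emph{growth rate at infinity} of $\la_1^{-n}\log^+\|F^n\|$, that is, the coefficient multiplying $\log^+\|\cdot\|$; it says nothing about the additive constant. A family of non-negative psh functions with uniformly bounded logarithmic growth rate can still diverge pointwise (e.g.\ $u_n=n+\log^+\|\cdot\|$), and nothing in your argument rules this out. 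The step you flag as hard—translating cohomological tameness into a pointwise upper bound—is not a technical loose end but the entire content of the lemma, and it cannot be extracted from cohomology alone. You also invoke "the quasi-psh potential of the eigenclass $\theta^*$ produced by the spectral theorem of the appendix," but Theorem~\ref{thm:spectral} produces $\theta^*$ as an abstract nef Weil class in the completion $\Ltwo(\fX)$ and establishes nothing about the existence of a quasi-psh potential attached to it.

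The paper closes the gap by a much shorter route: it cites Theorem~B$'$ of \cite{eigenval}, which supplies a psh function $U$ on $\C^2$ satisfying the \emph{functional inequality} $U\circ F\le\la_1 U$ together with the two-sided comparison $C^{-1}\log\|\cdot\|\le U\le C\log\|\cdot\|$ outside a compact set. Iterating gives $\la_1^{-n}U\circ F^n\le U$, and then the comparability of $U$ with $\log\|\cdot\|$ yields the lemma directly. It is precisely this functional inequality—extra analytic input beyond cohomology—that your argument lacks; constructing such a $U$ is the nontrivial work, carried out in \cite{eigenval}, and once it is available the lemma is immediate.
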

\begin{proof}
  By Theorem~B' in~\cite{eigenval} there exists
  a psh function $U$ on $\C^2$ and a constant $C>0$
  such that $U\circ F\le\la_1 U$ on $\C^2$ 
  and such that $C^{-1}\log\|\cdot\|\le U\le C\log\|\cdot\|$ 
  outside a compact subset of $\C^2$. 
  This easily implies the lemma.
\end{proof}
The key step in the proof of Theorem~D is to 
find a sort of filtration for the dynamics, similar to the
one in the case of polynomial automorphisms,
see \eg~\cite{BS1,FriedlandMilnor,HO}.
\begin{lem}\label{lem:filtration}
  For every $\e>0$ there exists an integer $n_0\ge1$,
  a constant $C>0$ and a partition $\C^2=V\cup V^+$
  with $V^+$ open, $FV^+\subset V^+$ and such that:
  \begin{itemize}
  \item[(i)]
    the $\limsup$ defining $G^+$ is a locally uniform limit
    on $V^+$; $G^+$ is pluriharmonic and strictly positive there;
  \item[(ii)]
    For $p\in V$ we have
    $\log^+\|F^{n_0}p\|\le(\la_2+\e)^{n_0}\log^+\|p\|+C$.
  \end{itemize}
\end{lem}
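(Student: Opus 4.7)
The plan is to exploit the superattracting dynamics at infinity provided by Theorem~\ref{thm:stability1}. Since $\la_2<\la_1<\la_1^2$, Proposition~\ref{prop:nondiv} implies that the eigenvaluation $\nu_*$ is nondivisorial, and $F$ is not conjugate to a skew product by the remark following Theorem~\ref{thm:stability1}. Theorem~\ref{thm:stability1} then provides a tight compactification $X$ of $\C^2$, a point $p_0\in X\setminus\C^2$, local coordinates $(z,w)$ at $p_0$ in which the lift $\tF:X\dashrightarrow X$ takes one of the superattracting normal forms~(a) or~(b), and an integer $n_1\ge 1$ such that every prime of $X$ is contracted to $p_0$ by $\tF^{n_1}$. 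Fixing $r>0$ small enough that the polydisc $\Omega=\{|z|<r,\,|w|<r\}\subset X$ is sent strictly into itself by $\tF$, I would set $W=\Omega\cap\C^2$, $V^+=\bigcup_{n\ge 0}F^{-n}(W)$, and $V=\C^2\setminus V^+$. Then $V^+$ is open with $FV^+\subset V^+$, and $V$ is forward-invariant.

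For~(i), given $p\in V^+$, let $N$ be the least integer with $F^Np\in W$; by continuity of $F^N$, there is a neighborhood $U$ of $p$ with $F^NU\subset W$. In case~(a), writing $u=-\log|z|$ and $v=-\log|w|$, the iteration in these coordinates reads $(u_{n+1},v_{n+1})^T=M(u_n,v_n)^T$ with $M$ the integer matrix of spectral radius $\la_1$ from the normal form. Near $p_0$ in $X$, the function $\log^+\|\cdot\|$ is comparable to $b_zu+b_wv$, where $b_z,b_w>0$ are the intersection multiplicities of $\{z=0\}$ and $\{w=0\}$ with the line at infinity in $\P^2$. Since $M$ has nonnegative entries with Perron eigenvalue $\la_1$, the sequence $\la_1^{-m}M^m$ converges to a nonzero rank-one projection onto the Perron eigenspace, and consequently $\la_1^{-n}\log^+\|F^n q\|$ converges locally uniformly on $U$ to a pluriharmonic function, strictly positive on $W$; pulling back along $F^N$ transports these properties to all of $V^+$. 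Case~(b) is handled analogously, using the corresponding asymptotics of the normal form in which the $z$-coordinate satisfies $u_{n+1}=\la_1 u_n$ exactly and the $w$-coordinate is controlled by $u_n$.

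For~(ii), the main step is to show that $V$ is bounded in $\C^2$. Granted this, $\sup_{p\in V}\log^+\|F^{n_0}p\|$ is finite for every $n_0\ge 1$ (since $V$ is forward-invariant, $F^{n_0}V\subset V$), so (ii) holds trivially with any $n_0$ and $C$ equal to this supremum. To prove boundedness, suppose $p_k\in V$ with $\|p_k\|\to\infty$, and extract a subsequence converging to some $q\in X\setminus\C^2$. The rational self-map $\tF^{n_1}:X\dashrightarrow X$ has finite indeterminacy set $I\subset X\setminus\C^2$, is continuous off $I$, and contracts every prime to $p_0$; if $q\notin I$ then $\tF^{n_1}(q)=p_0$, whence $F^{n_1}p_k\in W$ for large $k$, contradicting $p_k\in V$. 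The hard part is therefore the case $q\in I$: to deal with it, I would resolve the indeterminacy of $\tF^{n_1}$ by a finite sequence of point blowups over $X\setminus\C^2$, obtaining a dominating admissible compactification $X'$ on which a suitable iterate $\tF^{n_1'}$ is holomorphic in a neighborhood of $X'\setminus\C^2$ while still contracting every prime to the lift of $p_0$. Rerunning the argument on $X'$ with $W$ replaced by a polydisc about $p_0$ pulled back to $X'$ then excludes the indeterminacy case, and boundedness of $V$ follows.
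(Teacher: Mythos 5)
Your treatment of part~(i) is essentially the paper's: pass to the tight compactification $X$ from Theorem~\ref{thm:stability1}, use the monomial (or rigid) normal form at the superattracting fixed point $p_0$, and read off local uniform convergence and pluriharmonicity of $G^+$ near $p_0$, then pull back to the rest of $V^+$. That part is fine.

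The treatment of part~(ii) contains a fundamental error: the set $V$ is \emph{not} bounded in general. With your definition $V^+=\bigcup_{n\ge0}F^{-n}(W)$, the complement $V$ is exactly the set $K^+$ that Theorem~D introduces, and $K^+$ is typically unbounded. Already for a complex H\'enon map $F(x,y)=(y,y^2+x)$ (which has $\la_2=1<\la_1=2$) the filled Julia set $K^+$ is a nonempty, nontrivial, unbounded set; indeed $\partial K^+$ supports the current $dd^cG^+$. If $V$ were bounded, statement~(ii) would indeed be trivial, but it would also make the entire assertion of Theorem~D about points in $K^+$ vacuous. The very content of~(ii) is a quantitative statement about the slow growth of orbits near $Z^-\cap(X\setminus\C^2)$, not a bounded‐set triviality.

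Consequently your proposed argument for boundedness cannot succeed, and the mechanism by which it fails is instructive. You cannot, as you suggest, resolve the indeterminacy of $\tF^{n_1}$ once and for all so that an iterate becomes holomorphic ``in a neighborhood of $X'\setminus\C^2$'' while still contracting every prime to $p_0$: the new exceptional primes created by the blow-ups have no reason to be contracted to $p_0$, and in general no admissible compactification makes the lift of $F$ everywhere holomorphic at infinity (were it so, one would not need the roundabout statement of Theorem~A). The paper's route is different and unavoidable here. Lemma~\ref{lem:compactification} produces an admissible compactification $X$ (generally \emph{not} tight), a decomposition $X\setminus\C^2=Z^+\cup Z^-$, and an exponent $n_0$ so that $Z^+$ is contracted holomorphically to $p_0$ by $\tF^{n_0}$, while for every prime $E\subset Z^-$ one has the key divisor-level inequality $\ord_E(F^{n_0*}L)\le(\la_2+\e)^{n_0}\ord_E(L)$. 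This inequality is proved by a degree count: the lift $G:X\to X_0$ of $F^{n_0}$ has local topological degree at most $\la_2^{n_0}$ near $E$, which bounds the coefficient of $E$ in $G^*E_i$. Since $\tF^{-n_0}(\Omega_p)$ contains a full neighborhood of $Z^+$ in $X$, any $p\in V$ with $\|p\|$ large is close to $Z^-$, and using the local comparison $\log^+\|\cdot\|\sim-\ord_E(L)\log|z|+O(1)$ near a prime $E$, the divisor inequality translates directly into~(ii). Your proposal contains no analog of this degree‐count step, and the missing ingredient is exactly what makes~(ii) nontrivial.
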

\begin{proof}[Proof of Theorem~D]
  Apply Lemma~\ref{lem:filtration} with $0<\e<\la_1-\la_2$. 
  Set $U^+=\bigcup_{n\ge0}F^{-n}V^+$ and $K^+\=\C^2\setminus U^+$.
  Then $U^+$ is open and $G^+$ is pluriharmonic and strictly
  positive there. On the other hand, the estimate in~(ii)
  implies $G^+\equiv0$ on $K^+$. Thus $G^+$ is everywhere defined
  and satisfies $G^+\circ F=\la_1G^+$. 

  Let $G^{+*}$ be the upper semicontinuous 
  regularization of $G^+$.
  We shall prove that $G^{+*}=0$ on $K^+$.
  Lemma~\ref{lem:loggrowth} implies
  $G^{+*}\le C_1\log^+\|\cdot\|+C_2$ on $\C^2$ 
  for some constants $C_1,C_2>0$.
  Now pick $p\in K^+$. Then $F^{kn_0}p\in V$ for all 
  $k\ge0$ so using the estimate 
  in~Lemma~\ref{lem:filtration}~(ii) we get
  \begin{equation}\label{e1}
    G^{+*}\circ F^{kn_0}(p)\le C_3(\la_2+\e)^{kn_0}(\log^+\|p\|+C_4)
  \end{equation}
  for suitable constants $C_3,C_4>0$ and all $k\ge1$.
  Now the equality $G^+\circ F=\la_1G^+$ 
  yields the inequality $G^{+*}\circ F\ge\la_1G^{+*}$ 
  (we have equality outside the curves contracted by $F$),
  so~\eqref{e1} gives
  \begin{equation*}
    G^{+*}(p)
    \le C_3\left(\frac{\la_2+\e}{\la_1}\right)^{kn_0}(\log^+\|p\|+C_4)
    \to0\ \text{as $k\to\infty$}.
  \end{equation*}
  Hence $G^{+*}\equiv G^+\equiv0$ on $K^+$. 
  We may now argue as in the proof 
  of~\cite[Proposition~3.4]{BS1} to prove 
  that 
  $G^+$ is continuous and psh on $\C^2$, that the limit
  defining $G^+$ is locally uniform on $\C^2$, 
  and that the support of the 
  current $dd^cG^+$ equals $\partial K^+$.

  The estimate in Theorem~D follows from the corresponding estimate
  in Lemma~\ref{lem:filtration}~(ii). This completes the proof.
\end{proof}
The filtration in Lemma~\ref{lem:filtration} is constructed using
a well chosen compactification of $\C^2$. 
\begin{lem}\label{lem:compactification}
  For every $\e>0$ there exists an integer $n_0\ge1$,
  an admissible compactification $X$ of $\C^2$ and a 
  decomposition $X\setminus\C^2=Z^+\cup Z^-$ into 
  (reducible) curves $Z^+$, $Z^-$ without common
  components such that:
  \begin{itemize}
  \item[(i)]
    if $E$ is any irreducible component of $Z^-$ and $L$ is
    a generic affine function on $\C^2$ then
    \begin{equation}\label{e2}
      \ord_E(F^{n_0*}L)\le(\la_2+\e)^{n_0}\ord_E(L);
    \end{equation}
  \item[(ii)]
    there exists a point $p\in Z^+\setminus Z^-$
    such that $\tF^{n_0}$ is holomorphic in a 
    neighborhood of $Z^+$, 
    $\tF^{n_0}(Z^+)=\{p\}$, 
    $\tF$ is holomorphic at $p$, $\tF(p)=p$;
    and there exist local coordinates 
    $(z,w)$ at $p$ in which $\tF$ takes a simple normal form 
    as in Theorem~\ref{thm:stability1}:
    \begin{itemize}
    \item[(a)]
      if $Z^+$ is locally reducible at $p$, then 
      $Z^+=\{zw=0\}$ and
      $\tF(z,w)=(z^aw^b,z^cw^d)$, where $a,b,c,d\in\N$
      and the 
      $2\times2$ matrix $M$ with entries $a,b,c,d$
      has spectral radius $\la_1$;
    \item[(b)]
      if $Z^+$ is locally irreducible at $p$, then 
      $Z^+=\{z=0\}$ and
      $\tF(z,w)=(z^{\la_1},\mu z^cw+P(z))$,
      where $c\ge1$, $\mu\in\C^*$, and $P$ is a 
      nonconstant polynomial with $P(0)=0$.
    \end{itemize}
  \end{itemize}
\end{lem}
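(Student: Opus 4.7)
The plan is to build the required compactification on top of the one supplied by Theorem~\ref{thm:stability1}. Since $\la_2<\la_1$ implies $\la_2<\la_1^2$, and since $F$ cannot be conjugate to a skew product (a skew product satisfies $\la_2\ge\la_1$, by the remark following Theorem~\ref{thm:stability1}), we obtain a tight compactification $X_0$ of $\C^2$, a superattracting fixed point $p_0\in X_0\setminus\C^2$ for $\tF$ with the local normal form (a) or (b), and an integer $n_1\ge 1$ such that every prime of $X_0$ is contracted to $p_0$ by $\tF^{n_1}$. The integer $n_0$ will be chosen as a large multiple of $n_1$ depending on $\e$.

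Starting from $X_0$, I would inductively blow up the finitely many indeterminacy points of the iterates $\tF,\tF^2,\ldots,\tF^{n_0}$ lying on primes of $X_0\setminus\C^2$ other than $p_0$ (at $p_0$ itself the normal form is already holomorphic, so no blowup is needed there). Using Lemma~\ref{lem:staytight} and Corollary~\ref{cor:tight}, I would arrange these blowups so that the resulting compactification $X$ remains tight, ensuring that every prime $E$ of $X$ still corresponds to a divisorial valuation $\nu_E\in\cV_1$. I then define $Z^+$ to be the union of primes $E$ of $X$ for which $\tF^{n_0}$ is holomorphic on a Zariski neighborhood of $E$ and satisfies $\tF^{n_0}(E)=\{p_0\}$, and set $Z^-$ to be the union of the remaining primes. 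By construction, the primes through $p_0$ lie in $Z^+$, so $p_0\in Z^+\setminus Z^-$; condition (ii) then follows from the normal form at $p_0$ inherited directly from Theorem~\ref{thm:stability1}.

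To establish (i), unpack the inequality using $b_E=-\ord_E(L)>0$ and $\nu_E=b_E^{-1}\ord_E$: one has $\ord_E(F^{n_0*}L)=-b_E\,d(F^{n_0},\nu_E)$, so that (i) is equivalent to the \emph{lower} bound
\[
d(F^{n_0},\nu_E)\ge(\la_2+\e)^{n_0}\quad\text{for every prime $E$ of $Z^-$.}
\]
Because $X$ is tight, each $\nu_E$ lies in $\cV_1$. If $\a(\nu_E)>0$, Theorem~\ref{thm:basin}(b) gives $F_\bullet^n\nu_E\to\nu_*$, so that $d(F,F_\bullet^n\nu_E)\to\la_1$ and consequently $d(F^{n_0},\nu_E)\sim c_E\la_1^{n_0}$ for some constant $c_E=(Z_{\nu_E}\cdot\theta^*)>0$ (the Weil-class interpretation used in Theorem~\ref{thm:basin}). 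If $\a(\nu_E)=0$ then $\nu_E$ is a rational pencil valuation, and in the non-skew-product setting Proposition~\ref{prop:skewproduct} forbids $F_\bullet\nu_E=\nu_E$, so Theorem~\ref{thm:basin}(c) again yields $F_\bullet^n\nu_E\to\nu_*$ and the same asymptotic behavior. Since $Z^-$ consists of finitely many primes and $\la_1>\la_2$, choosing $n_0$ large enough absorbs the finitely many constants $c_E$ into the exponential margin between $\la_1^{n_0}$ and $(\la_2+\e)^{n_0}$.

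The main obstacle is the tight-blowup step. One must show that finitely many blowups suffice to render $\tF^{n_0}$ holomorphic on a neighborhood of the eventual $Z^+$, and that these blowups can all be performed while keeping the compactification tight, so that Theorem~\ref{thm:basin} remains applicable to every new prime. There is also a subtle uniformity issue: the set of primes depends on $n_0$, so one needs the constants $c_E$ to stay bounded below uniformly as additional blowups are performed. Both points require tracking the orbits of divisorial valuations under $F_\bullet$ up to time $n_0$, using the contraction of $F_\bullet$ towards $\nu_*$ granted by $\la_2<\la_1$ together with the structure theorems on $\cV_1$ from Section~\ref{geom-infty}.
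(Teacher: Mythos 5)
Your unpacking of \eqref{e2} goes in the wrong direction, and this derails the whole approach. You interpret (i) as the \emph{lower} bound $d(F^{n_0},\nu_E)\ge(\la_2+\e)^{n_0}$, but the substance of (i) --- what is actually used in Lemma~\ref{lem:filtration}(ii) and Theorem~D --- is an \emph{upper} bound: along the primes of $Z^-$ the degree grows slowly, $d(F^{n_0},\nu_E)\le(\la_2+\e)^{n_0}$, rather than at the generic rate $\la_1^{n_0}$. Indeed, Lemma~\ref{lem:filtration}(ii) asserts $\log^+\|F^{n_0}p\|\le(\la_2+\e)^{n_0}\log^+\|p\|+C$ on the ``bad'' set $V$, which sits near $Z^-$; translating this into the local contribution of a prime $E$ of $Z^-$ requires precisely the upper bound on $d(F^{n_0},\nu_E)$. (The line ``since $\ord_E(L)\ge1$'' in the paper's proof betrays a sign flip in the convention for $\ord_E(L)$ relative to the definition $b_E=-\ord_E(L)$; you followed the official sign convention, which literally reverses \eqref{e2}, but the intent is unambiguous from the downstream use.) Your argument via $d(F^{n_0},\nu_E)\sim c_E\la_1^{n_0}$ aims at the wrong inequality, and the uniformity worry you raise at the end is symptomatic of the fact that there is no good lower bound uniform over the primes of $Z^-$ (which depend on $n_0$). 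More importantly, that entire line of reasoning is irrelevant to what must be shown.

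The key idea of the actual proof is absent from your proposal: one passes to an admissible $X$ dominating $X_0$ on which $G=\tF^{n_0}$ becomes a \emph{holomorphic} map $G:X\to X_0$; then the restriction of $G$ to a neighborhood of a prime $E\subset Z^-$ has local topological degree at most $\la_2^{n_0}$, which bounds the coefficient of $E$ in $G^*E_i$ (for $E_i$ one of the two primes of $X_0$ through the fixed point). Unwinding this coefficient gives $d(F^{n_0},\nu_E)\le\gamma\la_2^{n_0}$ with $\gamma$ depending only on $X_0$, and one then chooses $n_0$ so that $\gamma\la_2^{n_0}<(\la_2+\e)^{n_0}$. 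This topological-degree mechanism is what makes the $\la_2^{n_0}$ (rather than $\la_1^{n_0}$) appear; your proposal never touches it. Two further points: (1) the paper explicitly remarks that $X$ need \emph{not} be tight --- one only resolves indeterminacy of $\tF^{n_0}$ as a map to $X_0$, so your insistence on keeping $X$ tight via Lemma~\ref{lem:staytight} is both unnecessary and unlikely to be achievable; (2) $n_0$ must also be taken large enough that $F^{n_0}_\bullet$ maps the whole segment $[-\deg,\nu_*]$ into $U(p)$, so that $Z^+$ (the connected component of $G^{-1}(p)$ through $p$) contains $L_\infty$ and the valuations dominated by $\nu_*$; that requirement is needed to make $Z^+$ connected and to justify the analysis of the components of $Z^-$.
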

The admissible compactification $X$ will not be tight in general.
We first show how to deduce Lemma~\ref{lem:filtration} from
Lemma~\ref{lem:compactification}, then prove the latter lemma.
\begin{proof}[Proof of Lemma~\ref{lem:filtration}]
  Apply Lemma~\ref{lem:compactification} after having decreased $\e$
  slightly.  We can find a small neighborhood $\Omega_p$ of $p$ of the
  form $\{|z|<\delta_1,|w|<\delta_2\}$ such that
  $\tF\Omega_p\subset\Omega_p$.  Set $V_p=\Omega_p\cap\C^2$,
  $V^+\=\tF^{-n_0}(\Omega_p)\cap\C^2=F^{-n_0}V_p$ and $V=\C^2\setminus
  V^+$.  Then $V^+\subset\C^2$ is open and $FV^+\subset V^+$.

  In local coordinates $(z,w)$ 
  near a prime $E=\{z=0\}$ of $X$,
  the function $\log^+\|\cdot\|$ in $\C^2$ 
  equals $-\ord_E(L)\log|z|+O(1)$.
  Similarly, in local coordinates $(z,w)$ at
  the intersection point between two primes 
  $E=\{z=0\}$ and $E'=\{w=0\}$ we have 
  $\log^+\|(z,w)\|=-\ord_E(L)\log|z|-\ord_{E'}(L)\log|w|+O(1)$.

  Note that $\tF^{-n_0}(\Omega_p)$ contains a neighborhood of 
  $Z^+$ in $X$. Estimate~(ii) in Lemma~\ref{lem:filtration}
  is therefore a consequence of~\eqref{e2}.

  It remains to prove~(i).
  For this we use the normal forms in
  Lemma~\ref{lem:compactification}.
  Suppose we are in case~(a). Then
  $\log^+\|(z,w)\|=-s\log|z|-t\log|w|+\varphi(z,w)$
  in $V_p$ for some constants $s,t>0$ and a 
  bounded function $\varphi$.
  It then follows easily that $\la_1^{-n}\log^+\|F^n\|$ 
  converges locally
  uniformly on $V_p$ to $G^+=-s'\log|z|-t'\log|w|$,
  where $s',t'>0$,
  (the vector $(s',t')$ is proportional to the
  eigenvector with eigenvalue $\la_1$ of $M^t$.)
  Hence $G^+$ is pluriharmonic and strictly positive in $V_p$.
  Since $F^{n_0}V^+\subset V_p$, the same properties
  must hold in $V^+$. This completes the proof
  in case~(a). Case~(b) is similar and left to the reader.
\end{proof}
\begin{proof}[Proof of Lemma~\ref{lem:compactification}]
  We apply Theorem~\ref{thm:stability1}. 
  We shall only treat case~(a) of that theorem, case~(b) being 
  similar.

  Thus we have an admissible (tight) compactification
  $X_0$ of $\C^2$, two primes $E_1$, $E_2$ of $X_0$, 
  intersecting in a point $p$,
  such that the lift $\tF_0:X_0\dashrightarrow X_0$ 
  defines an attracting holomorphic fixed point germ at $p$
  given by 
  $\tF_0(z,w)=(z^aw^b,z^cw^d)$ in suitable local
  coordinates $(z,w)$. 
  Write $Z_0=E_1\cup E_2$. 
  Set $\gamma\=\max_{i=1,2}\ord_{E_i}(L)$, where $L$
  is a generic affine function on $\C^2$.
  Pick $n_0\ge 1$ large enough so that 
  $\gamma\la_2^{n_0}<(\la_2+\e)^{n_0}$. 
 
  By increasing $n_0$ if necessary, we can also assume
  that all valuations in the segment $I=[-\deg,\nu_*]$ 
  are mapped by $F_\bullet^{n_0}$ into the open set 
  $U(p)\subset\cV_0$. Here, as before, $U(p)$
  consists of valuations whose center in $X_0$ is the point $p$. 
  Indeed, $U(p)$ is $F_\bullet$-invariant as $F$ 
  is holomorphic at $p$, 
  and by Theorem~\ref{thm:basin}~(b), 
  $F^n_\bullet \nu$ eventually falls into $U(p)$ 
  for $n$ large enough for any $\nu\in I$.
  
  We can now find an admissible compactification $X$ of $\C^2$
  dominating $X_0$ such that $F^{n_0}$ lifts to a holomorphic
  map $G:X\to X_0$. Since $\tF_0$ was holomorphic at $p$,
  we may assume that the strict transforms of $E_1$ and $E_2$
  in $X$ still intersect in a point $p$. We use the notation
  $E_1$, $E_2$ and $p$ also for these objects on $X$.

  Let $Z^+\subset X\setminus\C^2$ be the connected component 
  of $G^{-1}(p)$ containing $p$.
  Our choice of $n_0$ implies that all primes whose
  associated normalized divisorial valuation in $\cV_0$
  is dominated by the eigenvaluation lie in $Z^+$. 
  In particular the center $L_\infty$ of $-\deg$ in $X$
  belongs to $Z^+$.
  Let $Z^-$ be the union of the primes of $X$ not in $Z^+$.
  Let $\tF:X\dashrightarrow X$ be the rational lift of $F$.
  It is easy to see that~(ii) holds. 
  The key point is to establish~(i).

  The dual graph of $X\setminus\C^2$ being a tree and $Z^+$
  being connected imply that each connected component $W$
  of $Z^-$ contains a unique irreducible component $E=E_W$
  intersecting $Z^+$. Moreover, since $L_\infty\subset Z^+$,
  the normalized divisorial valuations in $\cV_0$ associated
  to the irreducible components of $W$ all dominate
  (in the partial ordering on $\cV_0$) the normalized
  divisorial valuation associated to $E_W$. Thus it
  suffices to verify~(i) for $E=E_W$. 

  Now $G$ must map $E$ onto one of $E_1$ or $E_2$,
  say $G(E)=E_1$. The restriction of $G$ to a
  neighborhood of $E$ has topological degree at most $\la_2^{n_0}$.
  This implies in particular that the coefficient of
  $E$ in the divisor $G^*E_1$ is at most $\la_2^{n_0}$.
  But this coefficient is easily seen to be
  \[
  \frac{\ord_E(F^{n_0*}L)}{\ord_{E_1}(L)}
  \ge \gamma^{-1}\ord_E(F^{n_0*}L)
  \]
  for a generic affine function $L$ on $\C^2$.
  This proves~\eqref{e2} since $\ord_E(L)\ge1$
  and $\gamma\la_2^{n_0}<(\la_2+\e)^{n_0}$.
\end{proof}
%
%
%
%
\subsection{Examples with $\la_2 = \la_1$}\label{subsecl1l2}
Very few surface maps with $\la_2=\la_1$ have been described 
in the literature.
We provide here a (presumably incomplete) list of examples.
The mappings in~\eqref{item:DS} below appear 
in~\cite[\S 4]{DSarkiv}.
A classification of quadratic polynomial 
maps with $\la_2= \la_1$ is given in~\cite{guedjquad}.
Note that Proposition~\ref{prop:quadint} implies that 
the eigenvaluation of a map with $\la_1=\la_2>1$ 
is always divisorial or infinitely singular.
\begin{enumerate}
\item
  $F = (A(x), Q(x,y))$ with $A$ affine
  and $\deg_y Q\ge1$. 
  Then $\la_1 = \la_2 = \deg_y Q$,
  and $F$ is proper iff for any fixed $x$, $\deg_y Q(x,y) = \deg_y Q$.
  One can show that any proper polynomial map with 
  $\la_1=\la_2>1$ whose eigenvaluation is divisorial takes
  this form in suitable affine coordinates.
\item 
  $F = (P(x), A(x,y))$ with $\deg_y(A)=1$.
  Then $\la_1=\la_2=\deg(P)$, and $F$ is proper iff 
  $A(x,y)=ay+B(x)$, $a\ne0$.
\item
  $F = (\la xP+a, \mu yP+b)$, or $F = (xP+a, (x+y)P+b)$ 
  with $P= P(x,y)$
  of degree $d-1$, and $a,b \in \C$, $\la,\mu \in \C^*$.  
  Then $\la_1 = \la_2 = d$, the
  eigenvaluation is $-\deg$ which is divisorial, and $F$ is 
  not proper.
\item
  $F = (a y^p+ P(x), x^q)$ with $\deg P = pq$, $ a \in \C$.  Then
  $\la_1 = \la_2 = pq$, $F$ is proper and has an infinitely singular
  eigenvaluation.
\item
  $F = ( x + aP, y + bP)$ with $P = P_d + \mathrm{l.o.t}$, 
  $d = \deg(P) \ge 2$  
  and $P_d(a,b) \ne 0$. 
  Then $F$ is proper and $\la_1 = \la_2 = \deg P$.
\item\label{item:DS}
  $F = (ax + by + c, P(x,y))$ with 
  $\deg P = \deg_y P \ge 2$. 
Then $\la_1 = \la_2 = \deg P$.
\item 
  $F = (P(y), ax+ b + Q(y))$ with $d = \deg P = \deg Q\ge 2$. 
  Then $\la_1 = \la_2 = d$.
\end{enumerate}
There seems to be no general conjecture or approach for studying 
the ergodic properties of these maps.
%
%
%
%
\appendix
\section{The Riemann-Zariski space at infinity}\label{RZspace}
In this appendix we briefly develop the necessary material
needed for the proof of Theorem~\ref{thm:basin}.
Most of the discussion is completely analogous to
the one in the paper~\cite{deggrowth}, to which we refer 
for details. See also~\cite{cantat,manin}.

The main new consideration is the construction and study 
of the Weil class $Z_\nu$ associated to a valuation $\nu$ 
centered at infinity.
%
%
\subsection{Weil and Cartier classes}
The set of admissible compactifications of $\C^2$
defines an inverse system: $X'$ dominates $X$ if
the birational map $X'\dashrightarrow X$ induced 
by the identity on $\C^2$ is a morphism.
Typically we then identify the primes of $X$
with their strict transforms in $X'$.
Formally, the \emph{Riemann-Zariski space} 
(of $\P^2$ at infinity) is defined as $\fX\=\varprojlim X$. 
(The only difference to~\cite{deggrowth} is that here 
we never blow up points in $\C^2$.) 

Our concern is with classes on $\fX$ rather than $\fX$ itself. 
Given $X$ we let $\hodge(X)$ be 
the vector space of $\R$-divisors on $X$ 
modulo numerical equivalence.
Then $\hodge(X)\simeq\Pic(X)\otimes_\Z\R$.
When $X'$ dominates $X$, the associated
morphism $\mu:X'\to X$ induces linear maps
$\mu_*:\hodge(X')\to\hodge(X)$ and 
$\mu^*:\hodge(X)\to\hodge(X')$ satisfying 
$\mu_*\mu^*=\id$. 
The space of \emph{Weil classes} on $\fX$ is
$W(\fX)\=\varprojlim\hodge(X)$.
We equip it with the projective limit topology.
Concretely, a Weil class $\b\in W(\fX)$ is
given by a collection of classes 
$\b_X\in\hodge(X)$, the \emph{incarnation} of $\b$ on $X$,
compatible under pushforward. 

A class $\b\in\hodge(X)$ for a fixed $X$ defines a Weil class
whose incarnation in any compactification $X'$ dominating $X$
is the pullback of $\b$ to $X'$. Such a Weil class is called 
a \emph{Cartier class}. It is \emph{determined in $X$}.
Formally, the set of Cartier classes
is $C(\fX)\=\varinjlim\hodge(X)$. 

The intersection pairing on each $X$ extends
to a nondegenerate pairing $W(\fX)\times C(\fX)\to\R$.
In particular, we have an inner product on $C(\fX)$.
By the Hodge index theorem, this is of Minkowski type,
allowing us to define the completion $\Ltwo(\fX)$.

A Weil class is \emph{nef} if all its incarnations are nef.
The class of a line in $\P^2$ defines a nef
Cartier class $\cL$ on $\fX$.
The set of nef classes forms a closed convex cone in $W(\fX)$.
Any nef Weil class belongs to $\Ltwo(\fX)$.

%
%
\subsection{Classes and valuations}\label{sec:classval}
Every class in $\hodge(X)$ admits a unique representation as
a divisor with support at infinity, \ie a real-valued function 
on the set of primes of~$X$.
Hence a Weil class $Z$ on $\fX$ can be identified 
with a homogeneous function on $\hcVdiv$: its value
at $\nu$ will be denoted $\nu(Z)$.
For example, $\ord_E(\cL)=b_E$, 
where $\cL\in W(\fX)$ is the class of a line on $\P^2$.

Given a valuation $\nu\in\hcV_0$ we define a
Weil divisor $Z_\nu\in W(\fX)$ as follows: 
$\ord_E(Z_\nu)=tb_E\a(\tnu\wedge\nu_E)$ where 
$\a$ denotes skewness and $\nu=t\tnu$, $\tnu\in\cV_0$.
When $\nu$ is divisorial, $Z_\nu$ is Cartier, see 
Lemma~\ref{lem:fundsol} below.
\begin{lem}\label{lem:embedding}
  The assignment $\nu\mapsto Z_\nu$ defines a continuous
  embedding of $\hcV_0$ onto a closed subset of $W(\fX)$.
\end{lem}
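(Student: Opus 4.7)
The plan is to verify, in turn, continuity of $\nu\mapsto Z_\nu$, its injectivity, and closedness of the image; together these yield the stated homeomorphism onto a closed subset.

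\emph{Continuity.} Since $W(\fX)$ carries the projective limit topology, it is enough to check continuity of each composition $\nu\mapsto\ord_E(Z_\nu)$ for $E$ a prime of some admissible compactification. Writing $t=-\nu(L)>0$ and $\tilde\nu=t^{-1}\nu$, one has $\ord_E(Z_\nu)=t\,b_E\,\alpha(\tilde\nu\wedge\nu_E)$. The factor $t$ is continuous by the very definition of the topology on $\hcV_0$, and so is the induced map $\nu\mapsto\tilde\nu\in\cV_0$. The tree retraction $\tilde\nu\mapsto\tilde\nu\wedge\nu_E$ onto the closed segment $[-\deg,\nu_E]\subset\cV_0$ is continuous by the characterization of the topology of $\cV_0$ as the weakest one making every retraction continuous, and $\alpha$ is a homeomorphism of this segment onto a compact interval of $\R$.

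\emph{Injectivity.} Testing $Z_\nu=Z_\mu$ at $E=L_\infty$---where $b_{L_\infty}=1$, $\nu_{L_\infty}=-\deg$, and $\alpha(-\deg)=1$---immediately gives $-\nu(L)=-\mu(L)=:t$. It remains to show $\tilde\nu=\tilde\mu$. The hypothesis yields $\alpha(\tilde\nu\wedge\nu_E)=\alpha(\tilde\mu\wedge\nu_E)$ for every prime $E$, hence (by density of divisorial valuations in $\cV_0$) $\alpha(\tilde\nu\wedge\tau)=\alpha(\tilde\mu\wedge\tau)$ for every $\tau\in\cV_0$. If $\tilde\nu\ne\tilde\mu$, pick $\tau$ strictly between their meet $\sigma=\tilde\nu\wedge\tilde\mu$ and $\tilde\nu$: then $\alpha(\tilde\nu\wedge\tau)=\alpha(\tau)>\alpha(\sigma)=\alpha(\tilde\mu\wedge\tau)$, since $\alpha$ is a strictly decreasing parameter on each segment of $\cV_0$, a contradiction.

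\emph{Closedness and continuity of the inverse.} Suppose $Z_{\nu_n}\to Z$ in $W(\fX)$. Evaluating at $L_\infty$ gives $t_n=-\nu_n(L)\to\ord_{L_\infty}(Z)=:t$. Since $\cV_0$ is compact, after extracting a subsequence $\tilde\nu_n\to\tilde\nu$ weakly; hence $\nu_n=t_n\tilde\nu_n\to t\tilde\nu$ pointwise, and continuity forces $Z=Z_{t\tilde\nu}$. When $t>0$, the limit $t\tilde\nu$ lies in $\hcV_0$, realizing $Z$ in the image; moreover injectivity forces every subsequential limit of $(\nu_n)$ to equal $t\tilde\nu$, so the full sequence converges to $t\tilde\nu$, which simultaneously gives closedness and continuity of the inverse. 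The main obstacle is handling the degenerate case $t=0$: since $\alpha$ is bounded on $[-\deg,\nu_E]$, every coordinate of $Z_{\nu_n}$ then tends to zero, forcing $Z=0$. One excludes this case either by adopting the convention that $\hcV_0$ carries the cone topology away from its apex, or by checking directly that convergence to a nonzero class in $W(\fX)$ forces a uniform positive lower bound on $t_n$, after which the previous compactness argument closes the proof.
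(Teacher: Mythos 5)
Your proof is correct in substance and substantially more detailed than the paper's, which disposes of the lemma in two sentences: ``after unwinding definitions'' it reduces to the fact that the topology on $\cV_0$ is the weakest one making $\nu\mapsto\a(\nu\wedge\nu_E)$ continuous for every $\nu_E\in\cVdiv$, which is exactly the tree-retraction characterization. Your continuity paragraph is this reduction carried out explicitly, so there is no real divergence in method there. The paper does not spell out injectivity or closedness at all; your injectivity argument, separating $\tilde\nu$ from $\tilde\mu$ by a $\tau$ strictly between the meet $\sigma=\tilde\nu\wedge\tilde\mu$ and $\tilde\nu$ and using strict monotonicity of $\a$ along segments, is clean and correct and a genuine addition.

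The most valuable part of your write-up is that you flag the degenerate case $t_n\to 0$, which the paper glosses over; in fact the statement is slightly too strong as written. Taking $\nu_n=\tfrac1n\nu$ for any fixed $\nu\in\hcV_0$ gives $Z_{\nu_n}=\tfrac1n Z_\nu\to 0$ in the projective-limit topology on $W(\fX)$, yet $0$ is not in the image, because $\ord_{L_\infty}(Z_\mu)=-\mu(L)>0$ for every $\mu\in\hcV_0$. So the image is closed in $W(\fX)\setminus\{0\}$ but not in $W(\fX)$ itself (its closure is the image together with $\{0\}$); equivalently, the restriction to $\cV_0$ has compact, hence closed, image, and $\hcV_0$ maps onto the open cone over it. Your second proposed fix is the correct one and does close the gap: if $Z\ne 0$ then $\ord_E(Z)>0$ for some $E$, and since $\a\le 1$ on $\cV_0$ we get $t_n b_E\ge\ord_E(Z_{\nu_n})\to\ord_E(Z)>0$, giving a positive lower bound on $t_n$, after which compactness of $\cV_0$ finishes. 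Your first proposed fix (altering the topology on $\hcV_0$) does not help, since closedness is a property of the image inside $W(\fX)$, not of the source. None of this affects the only place the lemma is invoked, namely Lemma~\ref{lem:nefcrit}, which uses only continuity and density of $\cVdiv$.
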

\begin{proof}
  After unwinding definitions, the statement boils down to 
  the topology on $\cV_0$ being the weakest topology 
  such that $\nu\mapsto\a(\nu\wedge\nu_E)$ is continuous 
  for all $\nu_E\in\cVdiv$. This in turn follows from the 
  characterization of the topology on $\cV_0$ in terms of the 
  tree structure.
\end{proof}
\begin{lem}\label{lem:fundsol}
  We have $(Z_\nu\cdot W)=\nu(W)$ for $\nu\in\hcVdiv$
  and $W\in C(\fX)$. In particular, for any two valuations 
  $\mu,\nu\in \cV_0$ one has
  \begin{equation}\label{e:wedge}
    (Z_\nu\cdot Z_\mu) = \a(\nu \wedge \mu) \in [-\infty,1].
  \end{equation}
  If $E$ is a prime of some admissible compactification 
  $X$, then $Z_{\ord_E}$ is a Cartier class on $\fX$ determined
  in $X$ by an integral class.
  Thus $b_E^2\a(\nu_E)\in\Z$.
\end{lem}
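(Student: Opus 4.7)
The plan is to first prove $(Z_\nu\cdot W)=\nu(W)$ for $\nu=\ord_E$ with $E$ a prime of some admissible compactification $X$; the general divisorial case will follow by linearity. The key idea is to realize $Z_{\ord_E}$ as a ``dual'' Cartier class. Since the primes of $X$ form a basis of $\hodge(X)$ on which the intersection pairing is non-degenerate (Hodge index theorem), there exists a unique class $D_E\in\hodge(X)$ satisfying $(D_E\cdot E')=\delta_{E,E'}$ for every prime $E'$ of $X$. Viewing $D_E$ as a Cartier class on $\fX$, the projection formula yields $(\mu^*D_E\cdot E'')=\delta_{E,E''}$ for every prime $E''$ of any admissible $X'$ dominating $X$, since strict transforms push forward to themselves and exceptional primes push forward to zero. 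Expanding any Cartier (or Weil) $W$ in the divisor basis of primes then gives $(D_E\cdot W)=\ord_E(W)$.

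Next I identify $D_E$ with $Z_{\ord_E}$, i.e.\ verify $\ord_{E'}(D_E)=b_Eb_{E'}\,\a(\nu_E\wedge\nu_{E'})$ for all primes $E'$, by induction on the sequence of blowups building $X$ from $\P^2$. The base case $X=\P^2$ is immediate since $L_\infty^2=1$, $b_{L_\infty}=1$, and $\a(\nu_{L_\infty})=1$. In the inductive step, both sides obey the same transformation law under a point blowup: the tree-theoretic side is controlled by $|\a(\nu_E)-\a(\nu_{E'})|=(b_Eb_{E'})^{-1}$ for adjacent primes, and the intersection side by the standard blowup formulas for $b_E$ and the intersection matrix; non-adjacent pairs reduce to the adjacent case via linearity and the tree structure of the dual graph. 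Once $D_E=Z_{\ord_E}$ is established, specializing the formula $(Z_\nu\cdot W)=\nu(W)$ to $W=Z_\mu$ for divisorial $\mu\in\cV_0$, and unwinding the coefficient definition of $Z_\mu$, gives $(Z_\nu\cdot Z_\mu)=\a(\nu\wedge\mu)$ for divisorial $\mu,\nu\in\cV_0$. The general case of $\mu,\nu\in\cV_0$ follows by density of $\cVdiv$ in $\cV_0$ combined with continuity: $\nu\mapsto Z_\nu$ is continuous into $W(\fX)$ by Lemma~\ref{lem:embedding}, all $Z_\nu$ are nef hence lie in $\Ltwo(\fX)$, and the tree meet $\a(\cdot\wedge\cdot)$ is compatible with the appropriate sequential limits.

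For integrality, one shows by induction on blowups that the intersection matrix of the primes of any admissible $X$ has determinant $\pm1$, since each point blowup multiplies the determinant by $-1$ (use the unipotent change of basis from strict transforms to pullbacks from $X$ together with the new exceptional divisor, on which the intersection form restricts to $(-1)$). By Cramer's rule, $D_E$ is an integer combination of primes of $X$, so $Z_{\ord_E}$ is determined in $X$ by an integral class. Specializing $\ord_{E'}(D_E)=b_Eb_{E'}\,\a(\nu_E\wedge\nu_{E'})$ to $E'=E$ then yields $b_E^2\a(\nu_E)=\ord_E(D_E)\in\Z$. The main obstacle is the inductive matching in the second paragraph: verifying that the linear-algebraic dual class and the skewness formula transform identically under a blowup. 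Both are governed by the same adjacency recursion, but the case analysis (blowup at a smooth point of a single prime versus at the intersection of two primes) together with the coefficient tracking is bookkeeping-heavy, even though it is conceptually routine.
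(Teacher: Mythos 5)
Your overall strategy is sound and it is a genuinely different route from the paper's. Where the paper proves $(Z_{\ord_E}\cdot W)=\ord_E(W)$ in one stroke via a tree-Laplacian identity --- it computes $(Z\cdot E)=b_E\Delta g_Z\{\nu_E\}$ for $Z\in\hodge(X)$ directly from the two linear relations $0=(\cL\cdot E)=b_E(E\cdot E)+\sum_i b_i$ and $(Z\cdot E)=b_Eg(\nu_E)(E\cdot E)+\sum_i b_i g(\nu_i)$ together with $|\a(\nu_i)-\a(\nu_E)|=(b_ib_E)^{-1}$, and then observes that $\Delta g_{Z_\nu}=\delta_\nu$ --- you instead introduce the dual class $D_E$ abstractly, use the projection formula to get $(D_E\cdot W)=\ord_E(W)$, and then propose to match the coefficients of $D_E$ with those of $Z_{\ord_E}$ by induction on blowups. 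Both work, but the paper's identity handles the coefficient matching without any induction at all, which is a real simplification; your induction would indeed require the case analysis you describe (free point versus satellite point). The integrality argument is essentially the paper's: you both invoke unimodularity of the intersection form, the only difference being that you re-derive it by tracking determinants through blowups rather than citing it.

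There is, however, one concrete error in your passage to general $\mu,\nu\in\cV_0$. You assert that ``all $Z_\nu$ are nef hence lie in $\Ltwo(\fX)$,'' but this is false: Lemma~\ref{lem:nefcrit} says precisely that $Z_\nu$ is nef if and only if $\a(\nu)\ge0$, and $\a$ ranges over all of $[-\infty,1]$ on $\cV_0$; moreover the remark following that lemma says $Z_\nu\in\Ltwo(\fX)$ if and only if $\a(\nu)>-\infty$. So for $\nu$ outside $\cV_1$ (in particular for ends with $\a(\nu)=-\infty$), the $\Ltwo$ pairing is simply not available, and your continuity argument as written does not produce $(Z_\nu\cdot Z_\mu)$. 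To get~\eqref{e:wedge} for general $\mu,\nu$ one has to take $\mu$ divisorial (so that $Z_\mu$ is Cartier and the Weil--Cartier pairing applies), check $(Z_\nu\cdot Z_\mu)=\nu(Z_\mu)=\a(\nu\wedge\mu)$, and then approach a general $\mu$ monotonically from below and interpret the equality in the extended sense where both sides may equal $-\infty$. This is what the paper's terse phrase ``this relation and the definition of $Z_\nu$ and $Z_\mu$ imply~\eqref{e:wedge}'' is implicitly doing; the $\Ltwo$/nef framing you invoke only covers the case $\a\ge0$, which is the case used in Theorem~\ref{thm:basin} but not all of what the lemma asserts.
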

\begin{proof}
  To prove  $(Z_\nu\cdot W)=\nu(W)$, we pick $X$ 
  such that $W\in C(\fX)$ is determined in $X$. 
  By linearity we may assume $W=E$ is
  a prime of $X$. What we seek to prove 
  is then a special case of a more general formula 
  $(Z\cdot E)=b_E\Delta g\{\nu_E\}$ for any $Z\in\NS(X)$,
  where $g=g_Z$ is the function on $\cV_0$ defined by $Z$
  and $\Delta g$ is its tree Laplacian as defined (up to a sign) 
  in~\cite[Section~A.4]{eigenval}.
  Indeed, $Z=Z_\nu$ as above is chosen so that
  $\Delta g_Z=\delta_{\nu}$.

  Let $E_1,\dots, E_n$ be the primes of $X$ intersecting $E$ 
  properly and write $b_i=b_{E_i}$, $\nu_i=\nu_{E_i}$.
  Assume $E\ne L_\infty$ for simplicity. 
  Then $0=(\cL\cdot E)=b_E(E\cdot E)+\sum_i b_i$
  and $(Z\cdot E)=b_Eg(\nu_E)(E\cdot E)+\sum_i b_ig(\nu_i)$.
  Subtracting, and rearranging 
  using $|\a(\nu_i)-\a(\nu_E)|=(b_ib_E)^{-1}$ yields
  $(Z\cdot E)=b_E\sum_i(g(\nu_i)-g(\nu_E))/|\a(\nu_i)-\a(\nu_E)|$,
  which equals $b_E\Delta g\{\nu_E\}$.
  A similar computation works in the case $E=L_\infty$,
  and completes the proof of the relation  $(Z_\nu\cdot W)=\nu(W)$.
  This relation and the definition of $Z_\nu$ and $Z_\mu$
  imply~\eqref{e:wedge}.

  For the last statement it suffices to
  observe that by the non-degeneracy and unimodularity of the 
  intersection form on $\Pic(X)$, there exists a 
  unique integral class $Z$ satisfying 
  $(Z\cdot W)=\ord_E(W)$ for any $W\in \Pic(X)$.
\end{proof}
\begin{lem}\label{lem:nefcrit}
  If $\nu\in\cV_0$
  then $Z_\nu$ is nef iff $\a(\nu)\ge0$.
\end{lem}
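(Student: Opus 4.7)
The plan is to tackle both implications via a blend of intersection theory and tree arguments, with the non-obvious direction reduced to a polynomial inequality on divisorial valuations. The necessity direction is quick: if $Z_\nu$ is nef then $Z_\nu\in\Ltwo(\fX)$ by the general fact that any nef Weil class lies in $\Ltwo(\fX)$. Its Cartier incarnations on each admissible compactification are themselves nef and hence have non-negative self-intersection by the Hodge Index Theorem; passing to the limit gives $(Z_\nu\cdot Z_\nu)\ge 0$, which by~\eqref{e:wedge} is exactly $\a(\nu)\ge 0$.

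For the sufficiency direction, I would first reduce to the divisorial case. Since $\a$ is decreasing in the tree order from the root $-\deg$, approximating $\nu$ by divisorial valuations $\nu_n$ on the segment $[-\deg,\nu]$ yields $\a(\nu_n)\ge\a(\nu)\ge 0$. Continuity $\nu\mapsto Z_\nu$ (Lemma~\ref{lem:embedding}) and closedness of the nef cone in $W(\fX)$ then let us assume $\nu=t\ord_E$ is divisorial, so that $Z_\nu$ is Cartier on some admissible $X$ containing $E$. Nefness on $X$ is tested by intersecting with irreducible curves. For primes $E'$ at infinity, Lemma~\ref{lem:fundsol} gives $(Z_\nu\cdot E')=\nu(E')=t\delta_{E,E'}\ge 0$ automatically. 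For the strict transform $\tilde C_0$ of a plane curve $\{P=0\}$, the principal divisor identity $\operatorname{div}(P)=\tilde C_0+\sum_{E'}\ord_{E'}(P)E'$ on $X$ gives $[\tilde C_0]=-\sum_{E'}\ord_{E'}(P)[E']$ in $\hodge(X)$, and Lemma~\ref{lem:fundsol} then yields the clean identity $(Z_\nu\cdot\tilde C_0)=-\nu(P)$.

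The sufficiency thus reduces to the valuative statement that \emph{for every divisorial $\nu_E\in\cV_0$ with $\a(\nu_E)\ge 0$ and every $P\in\C[x,y]$, one has $\nu_E(P)\le 0$}. This is the main obstacle. I would establish it through the tree-potential structure of $\nu\mapsto\nu(P)$ developed in~\cite[Appendix~A]{eigenval} and~\cite[Ch.~6]{valtree}: the curve valuations $\nu_b\in\cV_0$ attached to the branches $b$ of $\{P=0\}$ at infinity are the ``atoms'' of this potential, and the resulting formula expresses $-\nu(P)$ as a non-negatively weighted sum of skewnesses $\a(\nu\wedge\nu_b)$. Since $\nu\wedge\nu_b\le\nu$ and $\a$ is decreasing in the tree order, $\a(\nu\wedge\nu_b)\ge\a(\nu)\ge 0$, which forces $\nu(P)\le 0$.
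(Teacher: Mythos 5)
Correct, and essentially the paper's argument: both directions match, including the reduction to divisorial $\nu$ via continuity of $\nu\mapsto Z_\nu$ and closedness of the nef cone, the case split between primes at infinity and closures of affine curves, and the identity $(Z_\nu\cdot C)=-\nu(P)$. The one step the paper leaves implicit---that $\a(\nu)\ge0$ forces $\nu(P)\le0$ for every $P\in\C[x,y]$---you spell out via the tree-potential decomposition of $-\nu(P)$ into curve-valuation atoms, which is precisely the appeal to~\cite{eigenval} and~\cite{valtree} that the paper's terse assertion presupposes.
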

\begin{proof}
  If $Z_\nu$ is nef, then~\eqref{e:wedge} shows that
  $\a(\nu)=(Z_\nu\cdot Z_\nu)\ge0$.
  Conversely, if $\a(\nu)\ge0$, then 
  the definition of $Z_\nu$ shows that $Z_\nu\ge0$ as a 
  function on $\hcVdiv$ and that $\nu\mapsto Z_\nu$ is
  decreasing. The nef cone in $W(\fX)$ being closed,
  it suffices by Lemma~\ref{lem:embedding} to consider 
  the case when $\nu$ is divisorial.

  Hence assume $\nu=b_E^{-1}\ord_E$ is divisorial, where 
  $E$ is a prime in some admissible compactification $X$.
  We must show that $(Z_\nu\cdot C)\ge0$ for every irreducible
  curve $C$ in $X$. If $C$ is a prime of $X$, then this is clear
  by the above. If instead $C$ is the closure of a curve 
  $\{P=0\}$ in $\C^2$, then 
  $(Z_\nu\cdot C)=-\nu(P)$
  which is nonnegative since $\a(\nu)\ge0$.
\end{proof}
\begin{rmk}
  One can also show that $Z_\nu\in\Ltwo(\fX)$ iff 
  $\a(\nu)>-\infty$.
\end{rmk}
%
%
\subsection{Functoriality}
Let $F:\C^2\to\C^2$ be a dominant polynomial mapping. 
Following~\cite{deggrowth} we define actions of $F$ by
pushforward and pullback on classes on the Riemann-Zariski 
space $\fX$. 
The two key facts are 1) given any admissible compactification
$X'$, there exists another admissible 
compactification $X$ such that the lift $X\to X'$ of
$F$ is holomorphic, and 2) given any admissible
compactification $X$ and any prime $E$ of $X$, either $F$
maps $E$ onto a point or a curve in $\C^2$, or there
exists another compactification $X'$ and a prime 
$E'$ of $X'$ such that the lift 
$X\dashrightarrow X'$ of $F$ maps $E$ onto $E'$.

To begin with, we have natural actions 
$F^*:C(\fX)\to C(\fX)$ and $F_*:W(\fX)\to W(\fX)$.
For example, if $\b\in W(\fX)$ is a Weil class, the
incarnation of $F_*\b\in W(\fX)$ on a given 
admissible compactification $X'$ is the
push-forward of $\b_X\in\hodge(X)$ by the 
map $X\to X'$ induced by $F$ for any $X$ such that this
is holomorphic. 

As in~\cite{deggrowth}, the pushforward 
(resp.\ pullback) preserves 
(resp.\ extends to) $\Ltwo$-classes. We obtain bounded 
operators $F_*,F^*:\Ltwo(\fX)\to\Ltwo(\fX)$
and $(F_*\b\cdot\gamma)=(\b\cdot F^*\gamma)$ for
$\b,\gamma\in\Ltwo(\fX)$. 
These operators preserve nef classes. 
We have $F_*F^*=\la_2\cdot\id$ on $\Ltwo(\fX)$ where
$\la_2$ is the topological degree of $F$.
\begin{rmk}
  Note that $F$ defines a dominant rational selfmap of
  $\P^2$, so the constructions in~\cite{deggrowth} apply, 
  but they may
  not yield the same result as above. For example, consider
  the map $F(x,y)=(x^2,xy)$, which contracts the line $x=0$
  to the origin. If $\cL$ is the class of a line in
  $\P^2$, then $F_*\cL=2\cL$ with $F_*$ as above, whereas
  the image of $\cL$ under the pushforward considered 
  in~\cite{deggrowth} is a Cartier class determined 
  in the blowup of $\P^2$ at the origin.
\end{rmk}

\begin{lem}\label{lem:pushpush}
  We have $F_*Z_\nu=Z_{F_*\nu}$ for any $\nu\in\hcV_0$.
\end{lem}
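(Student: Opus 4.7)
My plan is to verify the identity first for divisorial valuations $\nu$, where the characterization of $Z_\nu$ by the intersection pairing (Lemma~\ref{lem:fundsol}) makes both sides transparent, and then to extend to the general case by density and continuity.

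For $\nu \in \hcVdiv$ with $d(F,\nu) > 0$, the image $F_*\nu$ is again divisorial, so that both $Z_\nu$ and $Z_{F_*\nu}$ are Cartier classes. Since the pairing $W(\fX) \times C(\fX) \to \R$ is nondegenerate, it suffices to show that $F_*Z_\nu$ and $Z_{F_*\nu}$ agree on every Cartier class $W$. The chain
\[
(F_*Z_\nu \cdot W) = (Z_\nu \cdot F^*W) = \nu(F^*W) = (F_*\nu)(W) = (Z_{F_*\nu} \cdot W)
\]
combines the adjointness of $F_*$ and $F^*$ on $\Ltwo(\fX)$, Lemma~\ref{lem:fundsol} applied to both $\nu$ and $F_*\nu$, and the defining identity $F_*\nu(P) = \nu(F^*P)$ extended from polynomials to Cartier classes by linearity.

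To extend to an arbitrary $\nu \in \hcV_0$ with $d(F,\nu) > 0$, I would pass to the limit along a sequence of divisorial valuations $\nu_n \to \nu$; such a sequence exists since $\hcVdiv$ is dense in $\hcV_0$. Three continuity statements make this work: $\nu \mapsto Z_\nu$ is continuous by Lemma~\ref{lem:embedding}; the operator $F_* : W(\fX) \to W(\fX)$ is continuous for the projective limit topology, being induced by the continuous finite-dimensional pushforwards $\hodge(X) \to \hodge(X')$; and $\nu \mapsto F_*\nu$ is continuous on $\{d(F,\cdot) > 0\}$ since convergence in $\hcV_0$ is pointwise on polynomials. Passing to the limit in $F_*Z_{\nu_n} = Z_{F_*\nu_n}$ yields the desired equality.

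The main obstacle is the degenerate case $d(F,\nu) = 0$, in which $F_*\nu = 0$ by convention and so $Z_{F_*\nu} = 0$; by Proposition~\ref{prop:proper-crit} this occurs only when $F$ is not proper. For divisorial $\nu = \ord_E$ in this regime, the prime $E$ is contracted by $F$ into $\C^2$ rather than to a curve at infinity. Fixing a holomorphic lift $\tF : X \to X'$, one verifies by a direct incarnation-by-incarnation computation that $\tF_*$ sends the Cartier representative of $Z_\nu$ to zero in $\hodge(X')$---intuitively because a cycle whose geometric image is carried off to a subvariety of $\C^2$ contributes no class at infinity---so $F_*Z_\nu = 0 = Z_{F_*\nu}$. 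The remaining non-divisorial $\nu$ with $d(F,\nu) = 0$ is then reached by a continuity argument inside the closed locus $\{d(F,\cdot) = 0\}$, and verifying the compatibility of the $F_*\nu = 0$ convention with limits from the non-degenerate regime is the technical point I expect to need the most care.
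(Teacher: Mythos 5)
Your proposal follows the same route as the paper: establish the identity for divisorial $\nu$ by a pairing computation, then extend by density and continuity. The chain
\[
(F_*Z_\nu \cdot W) = (Z_\nu \cdot F^*W) = \ord_E(F^*W) = k\ord_{E'}(W) = (Z_{F_*\nu}\cdot W)
\]
is precisely the paper's argument (where $k$ is the coefficient of $E$ in $\tF^*E'$), including the observation that the same computation gives $(F_*Z_\nu\cdot W)=0$ when $E$ is contracted into $\C^2$, since then $\ord_E(\tF^*W)=0$ for any $W$ supported at infinity. Two points in your write-up are off. First, the justification of the middle equality is not correct as stated: you invoke ``the defining identity $F_*\nu(P)=\nu(F^*P)$ extended from polynomials to Cartier classes by linearity,'' but $\nu(W)$ for $W$ Cartier is the coefficient of the prime $E_\nu$ in the representation of $W$ as a divisor supported at infinity, which is not obtained by plugging a polynomial into a valuation. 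The actual content is the geometric fact that $\ord_E(\tF^*W)=k\,\ord_{E'}(W)$ for a holomorphic lift $\tF$ sending $E$ onto $E'$ with $\ord_E(\tF^*E')=k$, together with the characterization of $F_*\ord_E$ as $k\,\ord_{E'}$ recalled just before Proposition~\ref{prop:V1invariant}. Second, the suggestion to handle a non-divisorial $\nu$ with $d(F,\nu)=0$ ``by a continuity argument inside the closed locus $\{d(F,\cdot)=0\}$'' is not the right move, since divisorial valuations need not be dense in that locus. Instead approximate $\nu$ by arbitrary divisorial $\nu_n\to\nu$ (typically with $d(F,\nu_n)>0$) and use that
\[
\ord_E(Z_{F_*\nu_n}) = d(F,\nu_n)\, b_E\,\a(F_\bullet\nu_n\wedge\nu_E) \to 0,
\]
because $d(F,\nu_n)\to 0$ while $\a(F_\bullet\nu_n\wedge\nu_E)\in[\a(\nu_E),1]$ is bounded; this handles both regimes of the continuity step uniformly, which is what the paper's single invocation of ``continuity and homogeneity'' presupposes.
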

\begin{proof}
  By continuity and homogeneity it suffices to prove this when 
  $\nu=\ord_E$ is divisorial, associated to a prime $E$ of
  some admissible compactification
  $X$ of $\C^2$. 
  Then we can find another admissible 
  compactification $X'$ 
  such that the lift $\tF:X\dashrightarrow X'$ is holomorphic
  and such that $\tF(E)$ is either 
  1) a prime $E'$ of $X'$ or
  2) a point or a curve in $\C^2$.

  In the first case, $F_*\nu=k\ord_{E'}$, where 
  $k$ is the coefficient of $\tF^*E'$ in $E$.
  For any Cartier class $W\in C(\fX)$ we then have
  $(F_*Z_\nu\cdot W)=(Z_\nu\cdot F^*W)
  =\ord_E(F^*W)=k\ord_{E'}(W)=(Z_{F_*\nu}\cdot W)$.

  In the second case, we have $F_*\nu=0$ by definition
  and the same computation above shows that 
  $(F_*Z_\nu\cdot W)=0$ for all $W\in C(\fX)$.
  Indeed, $W$ can be represented as a
  divisor supported at infinity. Hence $F_*Z_\nu=0$.
\end{proof}
The situation for the pull-back is more delicate. For simplicity we state 
a result only in the proper case.
\begin{prop}\label{prop:pull}
  Suppose $F$ is a \emph{proper} polynomial map. 
  Then any $\nu\in\cV_0$ admits at most $\la_2$ preimages
  in $\cV_0$ under $F_\bullet$, and
  one can write:
  \begin{equation}\label{e:pullback}
    F^*Z_\nu = \sum_{F_\bullet \mu = \nu}  a(\mu) Z_\mu,
  \end{equation}
  for some positive constants $a(\mu)$.
\end{prop}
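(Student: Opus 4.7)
The plan is to establish the identity first for divisorial $\nu$ by an explicit computation on a well-chosen admissible compactification, and then to deduce the general case (including the preimage bound) by a limit argument. For the finiteness, $F$ is proper and generically finite of degree $\la_2$, hence a finite morphism, so $R$ is a finite module of generic rank $\la_2$ over $F^*R$. Standard extension theory of valuations across a finite field extension then implies that each $\nu\in\cV_0$ has at most $\la_2$ preimages under $F_\bullet$ in $\cV_0$, and that preimages of a divisorial valuation are again divisorial.

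For the identity when $\nu=b_E^{-1}\ord_E$ is divisorial, with $E$ a prime of some admissible compactification $X'$, I would choose $X$ dominating $X'$ such that the lift $\tF:X\to X'$ is holomorphic and every preimage $\mu_i$ of $\nu$ is realized as $b_{E_i}^{-1}\ord_{E_i}$ for a prime $E_i$ of $X$. Properness forces $\tF^{-1}(X'\setminus\C^2)\subset X\setminus\C^2$, so $\tF^*E=\sum_i m_i E_i$ as a divisor on $X$, the sum being precisely over primes of $X$ mapping onto $E$, which correspond to the preimages of $\nu$, with $m_i\ge 1$ the local multiplicity. To pass from this to the claimed identity of classes, I would apply Lemmas~\ref{lem:fundsol} and~\ref{lem:pushpush}: for any prime $E_j$ of $X$, the pairing $(F^*Z_\nu\cdot E_j)=(Z_\nu\cdot\tF_*E_j)$ vanishes unless $\tF(E_j)=E$, in which case it equals $b_E^{-1}\deg(\tF|_{E_j})$. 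Comparing with $\sum_i a_i(Z_{\mu_i}\cdot E_j)$, which reduces to $a_j b_{E_j}^{-1}$ when $E_j$ is a preimage prime and to zero otherwise, forces $a_i=b_{E_i}\deg(\tF|_{E_i})/b_E>0$. Agreement with the remaining part of $\hodge(X)$ spanned by strict transforms of affine curves follows from an analogous computation, in which properness of $F$ ensures that both sides are supported at infinity.

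For general $\nu\in\cV_0$, approximate by divisorial $\nu_n\to\nu$. Lemma~\ref{lem:embedding} gives $Z_{\nu_n}\to Z_\nu$ in $W(\fX)$, and the continuity of $F^*$ on $\Ltwo(\fX)$ yields $F^*Z_{\nu_n}\to F^*Z_\nu$. Writing $F^*Z_{\nu_n}=\sum_i a_i^n Z_{\mu_i^n}$ with at most $\la_2$ terms and applying $F_*$ via Lemma~\ref{lem:pushpush} together with $F_*F^*=\la_2\id$ gives $\sum_i a_i^n d(F,\mu_i^n)=\la_2$; by Proposition~\ref{prop:proper-crit}, $d(F,\cdot)\ge c>0$ on $\cV_0$, so the coefficients $a_i^n$ are uniformly bounded by $\la_2/c$. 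Compactness of $\cV_0$ (via Lemma~\ref{lem:embedding}) and the continuity of $F_\bullet$ on $\cV_0$ (valid when $F$ is proper since $d(F,\cdot)$ is then bounded below) allow us, after passing to a subsequence, to obtain the formula for $\nu$ in the limit, with the surviving valuations being preimages of $\nu$ and the coefficients positive.

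The main obstacle I anticipate is the divisorial-case translation from the divisorial pullback $\tF^*E=\sum m_i E_i$ to the class identity in $\hodge(X)$, where one must carefully verify the contribution of strict transforms of affine curves. A secondary subtlety in the limit step is to show that every preimage of $\nu$ is in fact captured as a limit of preimages $\mu_i^n$, so that it appears with a positive coefficient on the right-hand side, as claimed.
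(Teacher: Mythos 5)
Your proposal follows the same route as the paper: establish the formula for divisorial $\nu$ by a direct intersection-theoretic computation on a sufficiently high admissible compactification, then extend to all of $\cV_0$ by compactness and continuity, using a uniform bound on the coefficients. Your divisorial computation is correct and in fact more explicit than the paper, which only remarks that this step is ``of the same flavor as'' Lemma~\ref{lem:pushpush}. Two small observations: the ``remaining part of $\hodge(X)$ spanned by strict transforms of affine curves'' that you worry about is empty, since the primes of any admissible compactification already form a basis of $\hodge(X)$, so pairing against primes alone pins down the class; and your bound $\sum_i a_i^n d(F,\mu_i^n)=\la_2$ together with $d(F,\cdot)\ge c>0$ (Proposition~\ref{prop:proper-crit}) is a perfectly good substitute for the paper's bound $\sum a(\mu)=(Z_\nu\cdot F_*\cL)\le\deg F$.

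The one genuine gap is precisely the ``secondary subtlety'' you flag at the end and leave unresolved: you must show that \emph{every} preimage of $\nu$ is captured in the limit with a strictly positive coefficient, not merely that the surviving limits of the $\mu_i^n$ lie in $F_\bullet^{-1}(\nu)$. Compactness and continuity of $F_\bullet$ give containment of the limit set inside $F_\bullet^{-1}(\nu)$, but nothing in your argument prevents a preimage being missed entirely or its coefficient degenerating to zero along the sequence. The paper resolves this by first noting that, since $F$ is proper, $F_\bullet$ is an \emph{open} (and surjective) map in the weak topology on $\cV_0$; openness is what forces the preimage sets $F_\bullet^{-1}(\nu_n)$ to eventually invade any prescribed neighborhood of each $\mu_0\in F_\bullet^{-1}(\nu)$, so that $\mu_0$ genuinely contributes to the limiting class. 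Without this, your argument yields only $F^*Z_\nu=\sum_{\mu\in S}a(\mu)Z_\mu$ with $S$ an a priori proper subset of $F_\bullet^{-1}(\nu)$, which is strictly weaker than the statement and insufficient for its uses elsewhere (e.g.\ the deduction of total invariance of $\nu$ from $F^*Z_\nu=\la_1 Z_\nu$ in Lemma~\ref{l:TFstruc}).
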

\begin{proof}
  Let $K$ be the function field of $\C^2$.
  The field extension $[K:F^*K]$ has degree $\la_2$. 
  By standard valuation theory, see~\cite{ZS}, for any valuation 
  $\nu$ on $K$ there exists at most $\la_2$ valuations 
  $\mu$ such that $F_*\mu=\nu$. This implies the first assertion.
  
  As $F$ is proper, $F_\bullet$ is an open surjective map 
  for the weak topology on $\cV_0$.
  Note also that if~\eqref{e:pullback} is true, 
  then one has the uniform bound
  $\sum_{F_\bullet^{-1}\{\nu\}} a(\mu)  = 
  (Z_\nu\cdot F_*\cL)\le \deg(F)\a(F_\bullet( -\deg))\le\deg F$. 
  Hence we can reduce the proof to the divisorial
  case by continuity. The proof is then of the same flavor as 
  the proof of Lemma~\ref{lem:pushpush} and left to the reader.
\end{proof}
%
%
\subsection{Dynamics}
Let $F:\C^2\to\C^2$ be a dominant polynomial mapping.
Denote its topological degree by $\la_2\ge1$ 
and set $\la_1\=\lim_{n\to\infty}(\deg F^n)^{1/n}$.
The techniques of~\cite{deggrowth} can be easily adapted
to prove the following result, which corresponds to
parts of Theorem~3.2 and Theorem~3.5 in that paper.
\begin{thm}\label{thm:spectral}
  Assume $\la_2<\la_1^2$. 
  Then there exist nonzero nef Weil classes
  $\theta_*, \theta^*\in \Ltwo(\fX)$ 
  such that $F^*\theta^*=\la_1\theta^*$
  and $F_*\theta_*=\la_1\theta_*$.
  They are unique up to scaling and we may
  normalize them by 
  $(\theta_*\cdot\cL)=(\theta_*\cdot\theta^*)=1$.
  Then $(\theta^*\cdot\theta^*)=0$, 
  $F_*\theta^*=(\la_2/\la_1)\theta^*$ and
  for any Weil class $\theta\in\Ltwo(\fX)$, we have 
  \begin{equation*}
    \frac1{\la_1^n}F^{n*}\theta
    \to(\theta\cdot\theta_*)\theta^*
    \quad\text{and}\quad
    \frac1{\la_1^n}F^n_*\theta
    \to(\theta\cdot\theta^*)\theta_*
    \quad\text{as $n\to\infty$}.
  \end{equation*}
\end{thm}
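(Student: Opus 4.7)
The plan is to transpose the Hilbert-space spectral argument of~\cite{deggrowth} from the classical Riemann-Zariski space of $\P^2$ to our Riemann-Zariski space $\fX$ at infinity. Recall that the intersection pairing on $\Ltwo(\fX)$ has Minkowski signature with $\cL$ timelike, so that $\|\theta\|^2\=2(\theta\cdot\cL)^2-(\theta\cdot\theta)$ turns $\Ltwo(\fX)$ into a Hilbert space on which $F^*$ and $F_*$ are bounded, mutually adjoint operators satisfying $F_*F^*=\la_2\,\id$ and preserving the nef cone.

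First I would construct $\theta^*$ by a renormalized limit. Set $\b_n\=(\deg F^n)^{-1}F^{n*}\cL$; this is a nef class with $(\b_n\cdot\cL)=1$ and $(\b_n\cdot\b_n)=\la_2^n/(\deg F^n)^2$. Since $\deg F^n\ge\la_1^n$ by submultiplicativity, the hypothesis $\la_2<\la_1^2$ forces $(\b_n\cdot\b_n)\to0$, so $(\|\b_n\|)$ is bounded and we may extract a weak limit $\theta^*$ (taking Ces\`aro averages if needed to upgrade to genuine convergence). The limit is nef and satisfies $(\theta^*\cdot\cL)=1$, $(\theta^*\cdot\theta^*)=0$. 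From the relation $F^*\b_n=(\deg F^{n+1}/\deg F^n)\b_{n+1}$ combined with $(\deg F^{n+1}/\deg F^n)\to\la_1$ along the chosen subsequence (via Fekete's lemma applied to $\log\deg F^n$), one deduces $F^*\theta^*=\la_1\theta^*$, and then $F_*\theta^*=(\la_2/\la_1)\theta^*$ from $F_*F^*=\la_2\,\id$. A symmetric construction applied to $(\deg F^n)^{-1}F^n_*\cL$ produces $\theta_*\in\Ltwo(\fX)$ with $F_*\theta_*=\la_1\theta_*$.

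To establish convergence and uniqueness, the core step is a spectral gap estimate. One verifies that the hyperplane $V\=\{\theta\in\Ltwo(\fX)\ |\ (\theta\cdot\theta_*)=0\}$ is $F^*$-invariant via the computation $(F^*\theta\cdot\theta_*)=(\theta\cdot F_*\theta_*)=\la_1(\theta\cdot\theta_*)$. Then, using $F_*F^*=\la_2\,\id$ together with the Hodge inequality applied to $\Ltwo$-classes, one shows that the operator norm of $F^*$ restricted to $V$ is at most $\sqrt{\la_2}$. Since $\sqrt{\la_2}<\la_1$ by hypothesis, iteration yields $\la_1^{-n}F^{n*}\theta\to0$ for $\theta\in V$. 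For arbitrary $\theta\in\Ltwo(\fX)$, writing $\theta=(\theta\cdot\theta_*)\theta^*+(\theta-(\theta\cdot\theta_*)\theta^*)$ (once the normalization $(\theta_*\cdot\theta^*)=1$ has been arranged) puts the second summand in $V$, giving $\la_1^{-n}F^{n*}\theta\to(\theta\cdot\theta_*)\theta^*$; the analogous statement for $F^n_*$ follows by symmetry. Uniqueness up to scaling then follows: any other nef $\la_1$-eigenvector of $F^*$ in $\Ltwo(\fX)$ is preserved by $\la_1^{-n}F^{n*}$ and thus must coincide with the limit $(\theta\cdot\theta_*)\theta^*$.

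I expect the principal obstacle to be establishing the spectral gap estimate $\|F^*|_V\|\le\sqrt{\la_2}$ rigorously, as it requires pushing the Hodge index theorem through the completion $\Ltwo(\fX)$ in a manner compatible with the operator-theoretic framework, and verifying that $\theta_*$ indeed provides a well-defined continuous linear functional on all of $\Ltwo(\fX)$. A secondary technical point is checking $(\theta_*\cdot\theta^*)>0$ so that the normalization can be imposed; this follows from $(\theta_*\cdot\theta^*)=\lim_n(\deg F^n)^{-1}\la_1^n(\cL\cdot\theta^*)$ together with the asymptotic $\deg F^n\sim c\la_1^n$ for some $c>0$, the latter being itself a byproduct of the spectral analysis.
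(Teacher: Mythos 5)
Your overall strategy is the right one and matches what the paper intends: Theorem~\ref{thm:spectral} is stated with no proof in the text beyond the remark that the techniques of~\cite{deggrowth} (their Theorems~3.2 and~3.5) adapt to the Riemann--Zariski space at infinity, and your proposal is a faithful reconstruction of that Hilbert-space spectral argument: build nef $\la_1$-eigenvectors by renormalizing $F^{n*}\cL$ and $F^n_*\cL$, deduce $(\theta^*\cdot\theta^*)=0$ from $\la_2<\la_1^2$, split off an $F^*$-invariant hyperplane, and use the Hodge index theorem to get a spectral gap below $\la_1$.

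Two details need tightening, and one is a genuine circularity. First, the statement ``$\|F^*|_V\|\le\sqrt{\la_2}$'' is not correct as an operator-norm bound in the ambient Hilbert norm of $\Ltwo(\fX)$: that norm is not adapted to the decomposition $V=\R\theta_*\oplus W$ with $W=(\theta^*)^\perp\cap(\theta_*)^\perp$, and the off-diagonal block coming from $F^*\theta_*$ can make the norm strictly larger than $\sqrt{\la_2}$. What one actually has is that $F^*$ is block-triangular with respect to $\R\theta_*\oplus W$, that $q$ is negative definite on $W$ so $q(F^*w,F^*w)=\la_2 q(w,w)$ gives $\rho(F^*|_W)\le\sqrt{\la_2}$, and that the diagonal entry on $\R\theta_*$ is $\la_2/\la_1<\sqrt{\la_2}$. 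Hence the \emph{spectral radius} of $F^*|_V$ is $\le\sqrt{\la_2}<\la_1$, which (via $\|T^n\|^{1/n}\to\rho(T)$) is exactly what you need for $\la_1^{-n}F^{n*}\to0$ on $V$; you should phrase the gap in terms of spectral radius rather than operator norm.

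Second, your argument for $(\theta_*\cdot\theta^*)>0$ is circular: the asymptotic $\deg F^n\sim c\la_1^n$ is an \emph{output} of the convergence statement in the theorem, not something available before the normalization is fixed. A direct argument works: $\theta^*$ and $\theta_*$ are nef and isotropic, so by the Hodge index theorem on $\Ltwo(\fX)$ the vanishing $(\theta^*\cdot\theta_*)=0$ would force them to be proportional, and then $\la_2\theta^*=F_*F^*\theta^*=\la_1 F_*\theta^*=\la_1^2\theta^*$, contradicting $\la_2<\la_1^2$. Finally, when extracting $\theta^*$ as a weak limit of $\b_n=(\deg F^n)^{-1}F^{n*}\cL$, Fekete's lemma gives $(\deg F^n)^{1/n}\to\la_1$ but not convergence of the ratios $\deg F^{n+1}/\deg F^n$; one needs either a Ces\`aro-averaging trick done carefully, or the cleaner route of applying a Krein--Rutman/Perron--Frobenius argument to $F^*$ acting on the closed nef cone of $\Ltwo(\fX)$, which is what makes the existence of the eigenvector painless.
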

%
%
%
%

\end{document}